\def\ZZ{{\mathbb Z}}
\newcommand{\cC}{\mathcal{C}}
\newcommand{\cP}{\mathcal{P}}
\newcommand{\cH}{\mathcal{H}}
\newcommand{\cI}{\mathcal{I}}
\newcommand{\cK}{\mathcal{K}}
\newcommand{\cB}{\mathcal{B}}
\newcommand{\cM}{\mathcal{M}}
\newcommand{\cF}{\mathcal{F}}
\newcommand{\cQ}{\mathcal{Q}}
\newcommand{\cW}{\mathcal{W}}
\newcommand{\cR}{\mathcal{R}}
\newcommand{\cS}{\mathcal{S}}
\newcommand{\cL}{\mathcal{L}}
\newcommand{\cV}{\mathcal{V}}
\newcommand{\rHP}{\mathrm{HP}}
\theoremstyle{plain}
\newtheorem{theorem}{Theorem}[section]
\newtheorem{proposition}[theorem]{Proposition}%
\newtheorem{lemma}[theorem]{Lemma}%
\newtheorem{coro}[theorem]{Corollary}%
\newtheorem{remark}[theorem]{Remark}%
\newtheorem{definition}[theorem]{Definition}%
\begin{document}
		
		\title[Hilbert series and Gorenstein property for some non-simple polyominoes]{HILBERT-POINCAR\'E SERIES AND GORENSTEIN PROPERTY FOR SOME NON-SIMPLE POLYOMINOES}
		
		\author{CARMELO CISTO}
		\address{Universit\'{a} di Messina, Dipartimento di Scienze Matematiche e Informatiche, Scienze Fisiche e Scienze della Terra\\
			Viale Ferdinando Stagno D'Alcontres 31\\
			98166 Messina, Italy}
		\email{carmelo.cisto@unime.it}

		\author{FRANCESCO NAVARRA}
		\address{Universit\'{a} di Messina, Dipartimento di Scienze Matematiche e Informatiche, Scienze Fisiche e Scienze della Terra\\
			Viale Ferdinando Stagno D'Alcontres 31\\
			98166 Messina, Italy}
		\email{francesco.navarra@unime.it}

		\author{ROSANNA UTANO}
		\address{Universit\'{a} di Messina, Dipartimento di Scienze Matematiche e Informatiche, Scienze Fisiche e Scienze della Terra\\
			Viale Ferdinando Stagno D'Alcontres 31\\
			98166 Messina, Italy}
		\email{rosanna.utano@unime.it}

		\subjclass[2010]{05B50, 05E40, 13C05, 13G05}		
		
		

    \keywords{Polyominoes, Hilbert-Poincar\'e series, rook polynomial, Gorenstein.}
		

	\begin{abstract}
	Let $\mathcal{P}$ be a closed path having no zig-zag walks, a kind of non-simple thin polyomino. In this paper we give a combinatorial interpretation of the $h$-polynomial of $K[\mathcal{P}]$, showing that it is the rook polynomial of $\mathcal{P}$. It is known by Rinaldo and Romeo (2021), that if $\mathcal{P}$ is a simple thin polyomino then the $h$-polynomial is equal to the rook polynomial of $\mathcal{P}$ and it is conjectured that this property characterizes all thin polyominoes. Our main demonstrative strategy is to compute the reduced Hilbert-Poincar\'e series of the coordinate ring attached to a closed path $\mathcal{P}$ having no zig-zag walks, as a combination of the Hilbert-Poincar\'e series of convenient simple thin polyominoes. As a consequence we prove that the Krull dimension is equal to $\vert V(\mathcal{P})\vert -\mathrm{rank}\, \mathcal{P}$ and the regularity of $K[\mathcal{P}]$ is the rook number of $\mathcal{P}$. Finally we characterize the Gorenstein prime closed paths, proving that $K[\mathcal{P}]$ is Gorenstein if and only if $\mathcal{P}$ consists of maximal blocks of length three.
\end{abstract}

	\maketitle
		
	\section{Introduction}
	
	\noindent A \textit{polyomino} is a finite collection of unitary squares joined edge by edge. In 2012 A.A. Qureshi defined the \textit{polyomino ideal} attached to a polyomino $\cP$ as the ideal $I_\cP$ generated by all inner 2-minors of $\cP$ in the polynomial ring over $K$ in the variables $x_v$, for each $v$ vertex of $\cP$. For more details see \cite{Qureshi}.\\
	\noindent The study of the main algebraic properties of the quotient ring $K[\cP]=S/I_{\cP}$ depending on the shape of $\cP$ has become an exciting line of research.
	For instance, several mathematicians have studied the primality of $I_\cP$, normality and Cohen-Macaulayness of $K[\cP]$. For some references to these results we mention  \cite{Cisto_Navarra_closed_path}, \cite{Cisto_Navarra_weakly}, \cite{Cisto_Navarra_CM_closed_path},  \cite{Dinu_Navarra}, \cite{Simple equivalent balanced}, \cite{def balanced}, \cite{Not simple with localization}, \cite{Trento}, \cite{Trento2}, \cite{Simple are prime}, \cite{Shikama}. We mention also other references, which provided inspiration for this work. 
	In \cite{Andrei} the author classifies all convex polyominoes whose coordinate rings are Gorenstein and computes the regularity of the coordinate ring of any stack polyomino in terms of the smallest interval which contains its
	vertices. In \cite{L-convessi} the authors give a new combinatorial interpretation of the regularity of the coordinate ring attached to an $L$-convex polyomino, as the rook number of $\cP$, that is the maximum number of rooks which can be arranged in $\cP$ in non-attacking positions. In \cite{Trento3} it is showed that if $\cP$ is a simple thin polyomino, which is a polyomino not containing the square tetromino, then the $h$-polynomial $h(t)$ of $K[\cP]$ is the rook polynomial $r_{\cP}(t)=\sum_{i=0}^n r_i x^i$ of $\cP$, whose coefficient $r_i$ represents the number of distinct possibilities of arranging $i$ rooks on cells of $\cP$ in non-attacking positions (with the convention $r_0=1$). Gorenstein simple thin polyominoes are also characterized using the $S$-property and finally it is conjectured that a polyomino is thin if and only if $h(t)=r_{\cP}(t)$. In this paper we give also a partial support to this conjecture, since we provide an affirmative answer for a particular class of non-simple thin polyominoes, namely closed paths. In \cite{Kummini rook polynomial} this conjecture is discussed for a certain	class of polyominoes. In a recent paper \cite{Parallelogram Hilbert series}  the authors introduce a particular equivalence relation on the rook complex of a simple polyomino and they conjecture that the number of equivalence classes of $i$ non-attacking rooks arrangements
	is exactly the $i$-th coefficient of the $h$-polynomial in the reduced Hilbert-Poincar\'e series. Moreover they prove it for the class of parallelogram polyominoes and, by a computational method, also for all simple polyominoes having rank at most eleven.\\
Let $\cP$ be a closed path without zig-zag walks, which is a particular non-simple thin polyomino introduced in \cite{Cisto_Navarra_closed_path}. The aim of this paper is to provide a combinatorial view of the $h$-polynomial of $K[\cP]$. In this direction, we show that it is equal to the rook polynomial of $\cP$, studying the Hilbert-Poincar\'e series of certain non-simple polyominoes having just one hole and relating them to the Hilbert-Poincar\'e series of simple thin polyominoes included in $\cP$. As a consequence we compute the Krull dimension and the regularity of $K[\cP]$ and, in conclusion, we give a characterization of the Gorenstein property in terms of the length of maximal blocks of $\cP$. Roughly speaking, a closed path is a sequence of cells, similar to a pearl necklace on a table, in which each pearl corresponds to a cell, producing only one hole. This class of polyominoes is defined in \cite{Cisto_Navarra_closed_path}, where the authors characterize their primality by the non-existence of zig-zag walks, equivalently by the existence of an $L$-configuration or a ladder of at least three steps.
    In Section \ref{Section: Introduction} we introduce definitions and notations, which are fundamental along the paper. In Section \ref*{Section:Poincare-Hilbert series of certain non-simple polyominoes}, we introduce a particular polyomino $\cL$ and define the class of $(\cL,\cC)$-polyominoes, $\cC$ a generic polyomino, and we provide an explicit formula for the Hilbert-Poincar\'e series of the related coordinate rings, depending on the Hilbert-Poincar\'e series of some polyominoes obtained by eliminating specific cells. In a particular case we compute also the Krull dimension of the coordinate ring of a polyomino belonging to this class. In Section \ref{Section: H-P series of prime closed path polyominoes having no L-configuration} we assume that $\cP$ is a closed path polyomino and we deal the case in which $\cP$ has no $L$-configuration but it contains a ladder of at least three steps.  
    In order to reach our aim we describe the initial ideals of some subpolyominoes of $\cP$ with respect to some monomial orders. The case $\cP$ has an $L$-configuration is discussed implicitly in Section 3, since such a closed path is a particular $(\mathcal{L},\cC)$-polyomino, with $\cC$ a simple path polyomino. In this way we fulfil the study of the Hilbert-Poincar\'e series for the class of closed paths without zig-zag walks. 
In Section 5 we prove that the $h$-polynomial of $K[\cP]$, where $\cP$ is a prime closed path polyomino, is the rook polynomial of $\cP$, obtaining as a consequence the regularity and the Krull dimension of $K[\cP]$. 
Finally we characterize all Gorenstein prime closed paths using the $S$-property. We conclude giving some open questions. 

\section{Polyominoes and polyomino ideals}\label{Section: Introduction}

\noindent Let $(i,j),(k,l)\in \mathbb{Z}^2$. We say that $(i,j)\leq(k,l)$ if $i\leq k$ and $j\leq l$. Consider $a=(i,j)$ and $b=(k,l)$ in $\mathbb{Z}^2$ with $a\leq b$. The set $[a,b]=\{(m,n)\in \mathbb{Z}^2: i\leq m\leq k,\ j\leq n\leq l \}$ is called an \textit{interval} of $\mathbb{Z}^2$. 
In addition, if $i< k$ and $j<l$ then $[a,b]$ is a \textit{proper} interval. In such a case we say $a, b$ the \textit{diagonal corners} of $[a,b]$ and $c=(i,l)$, $d=(k,j)$ the \textit{anti-diagonal corners} of $[a,b]$. If $j=l$ (or $i=k$) then $a$ and $b$ are in \textit{horizontal} (or \textit{vertical}) \textit{position}. We denote by $]a,b[$ the set $\{(m,n)\in \mathbb{Z}^2: i< m< k,\ j< n< l\}$. A proper interval $C=[a,b]$ with $b=a+(1,1)$ is called a \textit{cell} of $\ZZ^2$; moreover, the elements $a$, $b$, $c$ and $d$ are called respectively the \textit{lower left}, \textit{upper right}, \textit{upper left} and \textit{lower right} \textit{corners} of $C$. The sets $\{a,c\}$, $\{c,b\}$, $\{b,d\}$ and $\{a,d\}$ are the \textit{edges} of $C$. We put $V(C)=\{a,b,c,d\}$ and $E(C)=\{\{a,c\},\{c,b\},\{b,d\},\{a,d\}\}$. Let $\cS$ be a non-empty collection of cells in $\mathbb{Z}^2$. The set of the vertices and of the edges of $\cS$ are respectively $V(\cS)=\bigcup_{C\in \cS}V(C)$ and $E(\cS)=\bigcup_{C\in \cS}E(C)$, $\mathrm{rank}\,\cS$ is the number of cells belonging to $\cS$. If $C$ and $D$ are two distinct cells of $\cS$, then a \textit{walk} from $C$ to $D$ in $\cS$ is a sequence $\cC:C=C_1,\dots,C_m=D$ of cells of $\ZZ^2$ such that $C_i \cap C_{i+1}$ is an edge of $C_i$ and $C_{i+1}$ for $i=1,\dots,m-1$. In addition, if $C_i \neq C_j$ for all $i\neq j$, then $\cC$ is called a \textit{path} from $C$ to $D$. We say that $C$ and $D$ are connected in $\cS$ if there exists a path of cells in $\cS$ from $C$ to $D$.
A \textit{polyomino} $\cP$ is a non-empty, finite collection of cells in $\mathbb{Z}^2$ where any two cells of $\cP$ are connected in $\cP$. For instance, see Figure \ref{Figura: Polimino introduzione}.
\begin{figure}[h]
	\centering
	\includegraphics[scale=0.5]{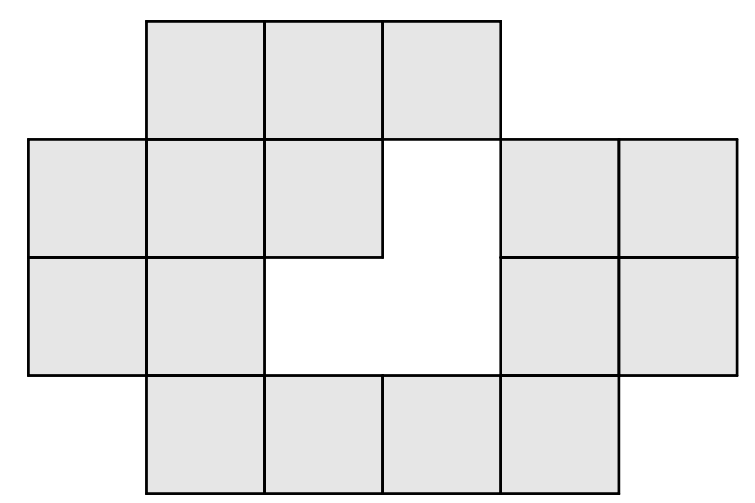}
	\caption{A polyomino.}
	\label{Figura: Polimino introduzione}
\end{figure}

\noindent We say that a polyomino $\cP$ is \textit{simple} if for any two cells $C$ and $D$ not in $\cP$ there exists a path of cells not in $\cP$ from $C$ to $D$. A finite collection of cells $\cH$ not in $\cP$ is a \textit{hole} of $\cP$ if any two cells of $\cH$ are connected in $\cH$ and $\cH$ is maximal with respect to set inclusion. For example, the polyomino in Figure \ref{Figura: Polimino introduzione} is not simple with an hole. Obviously, each hole of $\cP$ is a simple polyomino and $\cP$ is simple if and only if it has not any hole.\\
Let $A$ and $B$ be two cells of $\mathbb{Z}^2$, with $a=(i,j)$ and $b=(k,l)$ as the lower left corners of $A$ and $B$ and $a\leq b$. A \textit{cell interval} $[A,B]$ is the set of the cells of $\mathbb{Z}^2$ with lower left corner $(r,s)$ such that $i\leqslant r\leqslant k$ and $j\leqslant s\leqslant l$. If $(i,j)$ and $(k,l)$ are in horizontal (or vertical) position, we say that the cells $A$ and $B$ are in \textit{horizontal} (or \textit{vertical}) \textit{position}.\\
Let $\cP$ be a polyomino, $A$ and $B$ two cells of $\cP$ in vertical or horizontal position. 	 
The cell interval $[A,B]$, containing $n>1$ cells, is called a
\textit{block of $\cP$ of rank n} if all cells of $[A,B]$ belong to $\cP$. The cells $A$ and $B$ are called \textit{extremal} cells of $[A,B]$. Moreover, a block $\cB$ of $\cP$ is \textit{maximal} if there does not exist any block of $\cP$ which contains properly $\cB$. It is clear that an interval of $\ZZ^2$ identifies a cell interval of $\ZZ^2$ and vice versa, hence we can associate to an interval $I$ of $\ZZ^2$ the corresponding cell interval denoted by $\cP_{I}$. A proper interval $[a,b]$ is called an \textit{inner interval} of $\cP$ if all cells of $\cP_{[a,b]}$ belong to $\cP$. We denote by $\cI(\cP)$ the set of all inner intervals of $\cP$. An interval $[a,b]$ with $a=(i,j)$, $b=(k,j)$ and $i<k$ is called a \textit{horizontal edge interval} of $\cP$ if the sets $\{(\ell,j),(\ell+1,j)\}$ are edges of cells of $\cP$ for all $\ell=i,\dots,k-1$. In addition, if $\{(i-1,j),(i,j)\}$ and $\{(k,j),(k+1,j)\}$ do not belong to $E(\cP)$, then $[a,b]$ is called a \textit{maximal} horizontal edge interval of $\cP$. We define similarly a \textit{vertical edge interval} and a \textit{maximal} vertical edge interval. \\
\noindent As in \cite{Trento} we call a \textit{zig-zag walk} of $\cP$ a sequence $\cW:I_1,\dots,I_\ell$ of distinct inner intervals of $\cP$ where, for all $i=1,\dots,\ell$, the interval $I_i$ has either diagonal corners $v_i$, $z_i$ and anti-diagonal corners $u_i$, $v_{i+1}$ or anti-diagonal corners $v_i$, $z_i$ and diagonal corners $u_i$, $v_{i+1}$, such that:
\begin{enumerate}[(1)]
	\item $I_1\cap I_\ell=\{v_1=v_{\ell+1}\}$ and $I_i\cap I_{i+1}=\{v_{i+1}\}$, for all $i=1,\dots,\ell-1$;
	\item $v_i$ and $v_{i+1}$ are on the same edge interval of $\cP$, for all $i=1,\dots,\ell$;
	\item for all $i,j\in \{1,\dots,\ell\}$ with $i\neq j$, there exists no inner interval $J$ of $\cP$ such that $z_i$, $z_j$ belong to $J$.
\end{enumerate}
\noindent In according to \cite{Cisto_Navarra_closed_path}, we recall the definition of a \textit{closed path polyomino}, and the configuration of cells characterizing its primality. We say that a polyomino $\cP$ is a \textit{closed path} if it is a sequence of cells $A_1,\dots,A_n, A_{n+1}$, $n>5$, such that:
\begin{enumerate}[(1)]
	\item $A_1=A_{n+1}$;
	\item $A_i\cap A_{i+1}$ is a common edge, for all $i=1,\dots,n$;
	\item $A_i\neq A_j$, for all $i\neq j$ and $i,j\in \{1,\dots,n\}$;
	\item for all $i\in\{1,\dots,n\}$ and for all $j\notin\{i-2,i-1,i,i+1,i+2\}$ then $V(A_i)\cap V(A_j)=\emptyset$, where $A_{-1}=A_{n-1}$, $A_0=A_n$, $A_{n+1}=A_1$ and $A_{n+2}=A_2$. 
\end{enumerate}
A path of five cells $C_1, C_2, C_3, C_4, C_5$ of $\cP$ is called an \textit{L-configuration} if the two sequences $C_1, C_2, C_3$ and $C_3, C_4, C_5$ go in two orthogonal directions. A set $\cB=\{\cB_i\}_{i=1,\dots,n}$ of maximal horizontal (or vertical) blocks of rank at least two, with $V(\cB_i)\cap V(\cB_{i+1})=\{a_i,b_i\}$ and $a_i\neq b_i$ for all $i=1,\dots,n-1$, is called a \textit{ladder of $n$ steps} if $[a_i,b_i]$ is not on the same edge interval of $[a_{i+1},b_{i+1}]$ for all $i=1,\dots,n-2$. For instance, in Figure \ref{Figura:L conf + Ladder}(a) there is a closed path having an $L$-configuration and a ladder of three steps. We recall that a closed path has no zig-zag walks if and only if it contains an $L$-configuration or a ladder of at least three steps (see \cite[Section 6]{Cisto_Navarra_closed_path}).
\\A finite non-empty collection of cells $\cP$ is called a \textit{weakly closed path} (\cite[Definition 4.1]{Cisto_Navarra_weakly}) if it is a path of $n$ cells $A_1,\dots,A_{n-1},A_n=A_0$ with $n>6$ such that:
	\begin{enumerate}[(1)]
		\item $\vert V(A_0)\cap V(A_1)\vert =1$;
		\item $V(A_2)\cap V(A_{0})=V(A_{n-1})\cap V(A_1)=\emptyset$;
		\item $V(A_i)\cap V(A_j)=\emptyset$ for all $i\in\{1,\dots,n\}$ and for all $j\notin\{i-2,i-1,i,i+1,i+2\}$, where the indices are reduced modulo $n$.
		\end{enumerate}
		A finite collection of cells of $\cP$, made up of a maximal horizontal (resp. vertical) block $[A,B]$ of $\cP$ of length at least two and two distinct cells $C$ and $D$ of $\cP$, not belonging to $[A,B]$, with $V(C)\cap V([A,B])=\{a_1\}$ and $V(D)\cap V([A,B])=\{a_2,b_2\}$, $a_2\neq b_2$, is called a \textit{weak ladder} if $[a_2,b_2]$ is not on the same maximal horizontal (resp. vertical) edge interval of $\cP$ containing $a_1$ (see Figure \ref{Figura:L conf + Ladder}(b))
		
\begin{figure}[h!]
	\centering
	\subfloat[]{\includegraphics[scale=0.55]{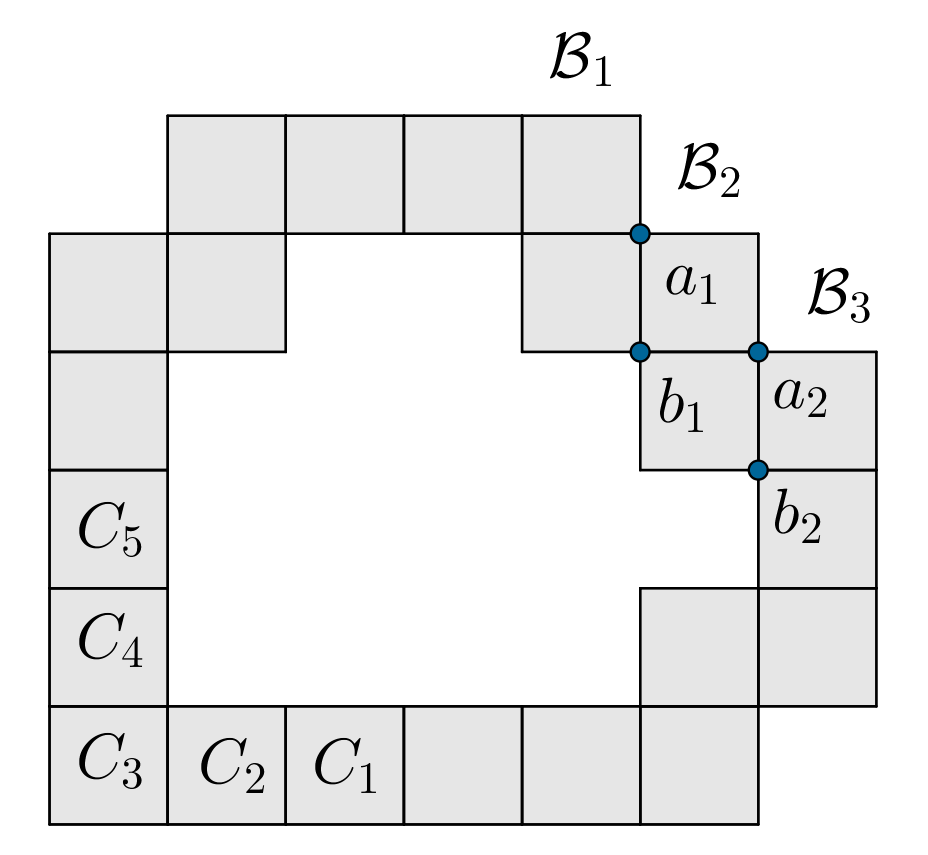}}
	\qquad\qquad
	\subfloat[]{\includegraphics[scale=0.65]{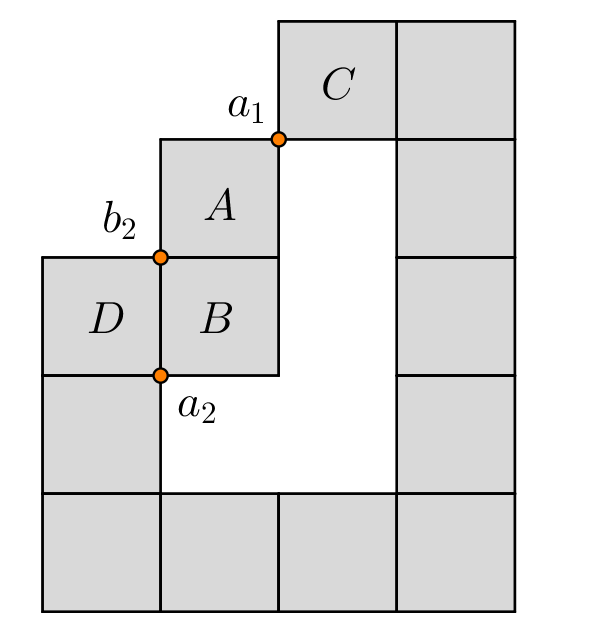}} 
	\caption{Examples of a closed path (a) and a weakly closed path (b).}
	\label{Figura:L conf + Ladder}
\end{figure}

\noindent Let $\cP$ be a polyomino. We set $S_\cP=K[x_v\vert  v\in V(\cP)]$, where $K$ is a field. If $[a,b]$ is an inner interval of $\cP$, with $a$, $b$ and $c$, $d$ respectively diagonal and anti-diagonal corners, then the binomial $x_ax_b-x_cx_d$ is called an \textit{inner 2-minor} of $\cP$. We call the ideal $I_{\cP}$ in $S_\cP$ generated by all the inner 2-minors of $\cP$ the \textit{polyomino ideal} of $\cP$. We set also $K[\cP] = S_\cP/I_{\cP}$, which is the \textit{coordinate ring} of $\cP$. \\
\noindent We recall some notions on the Hilbert function and the Hilbert-Poincar\'e series of a graded $K$-algebra $R/I$. Let $R$ be a graded $K$-algebra and $I$ be an homogeneous ideal of $R$. Then $R/I$ has a natural structure of graded $K$-algebra as $\bigoplus_{k\in\mathbb{N}}(R/I)_k$. The numerical function $\mathrm{H}_{R/I}:\mathbb{N}\to \mathbb{N}$ with $\mathrm{H}_{R/I}(k)=\dim_{K} (R/I)_k$ is called the \textit{Hilbert function} of $R/I$. The formal series $\rHP_{R/I}(t)=\sum_{k\in\mathbb{N}}\mathrm{H}_{R/I}(k)t^k$ is called the \textit{Hilbert-Poincar\'e series} of $R/I$. 
It is known by Hilbert-Serre theorem that there exists a polynomial $h(t)\in \mathbb{Z}[t]$ with $h(1)\neq0$ such that $\rHP_{R/I}(t)=\frac{h(t)}{(1-t)^d}$, where $d$ is the Krull dimension of $R/I$. Moreover, if $R/I$ is Cohen-Macaulay then $\mathrm{reg}(R/I)=\deg h(t)$. Recall also that if $S=K[x_1,\ldots,x_n]$ then $\rHP_{S}(t)=\frac{1}{(1-t)^n}$.\\
We will use frequently the following well known results (see for instance \cite[Chapter 5]{Villareal}).

\begin{proposition}\label{Proposizione: la prima che serve per calcolare HP}
	Let $R$ be a graded $K$-algebra and $I$ be a graded ideal of $R$. Let $q$ be an homogeneous element of $R$ of degree $m$.  Consider the exact sequence: $$0 \longrightarrow R/(I:q)  \longrightarrow R/I  \longrightarrow R/(I,q) \longrightarrow0  $$ Then $\rHP_{R/I}(t)=\rHP_{R/(I,q)}(t)+t^m\rHP_{R/(I:q)}(t)$.
\end{proposition}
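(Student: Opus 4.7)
The plan is to establish that the displayed short exact sequence is in fact a sequence of graded modules once we take the shift in degree into account, and then to invoke the additivity of the Hilbert function on short exact sequences of graded modules.

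First I would define the left-hand map explicitly as multiplication by $q$, that is, $\varphi: R/(I:q) \longrightarrow R/I$ given by $\overline{r} \mapsto \overline{rq}$. Well-definedness is immediate from the definition of the colon ideal, and injectivity follows from the same definition: $\overline{rq}=0$ in $R/I$ means $rq \in I$, i.e.\ $r\in (I:q)$. For the right-hand map I would take the canonical surjection $R/I \twoheadrightarrow R/(I,q)$, and observe that an element $\overline{s}\in R/I$ goes to zero in $R/(I,q)$ iff $s\in (I,q)$, iff $s \equiv rq \pmod{I}$ for some $r\in R$, iff $\overline{s}$ lies in the image of $\varphi$. Hence the sequence is exact.

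Next, since $q$ is homogeneous of degree $m$, the map $\varphi$ sends the graded piece of degree $k$ of $R/(I:q)$ into the graded piece of degree $k+m$ of $R/I$. Interpreting the sequence as a sequence of graded $R$-modules with the correct shift, one has
\begin{equation*}
0 \longrightarrow \bigl(R/(I:q)\bigr)(-m) \longrightarrow R/I \longrightarrow R/(I,q) \longrightarrow 0.
\end{equation*}
Taking $K$-dimensions in each graded component, the additivity of dimension on short exact sequences of finite-dimensional vector spaces gives, for every $k\in\mathbb{N}$,
\begin{equation*}
\mathrm{H}_{R/I}(k) \;=\; \mathrm{H}_{R/(I:q)}(k-m) \,+\, \mathrm{H}_{R/(I,q)}(k),
\end{equation*}
with the convention $\mathrm{H}_{R/(I:q)}(k-m)=0$ for $k<m$.

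Finally I would multiply this identity by $t^k$ and sum over $k\in\mathbb{N}$. The sum $\sum_{k}\mathrm{H}_{R/(I:q)}(k-m)\,t^k$ reindexes to $t^m \sum_{k}\mathrm{H}_{R/(I:q)}(k)\,t^k = t^m \rHP_{R/(I:q)}(t)$, yielding
\begin{equation*}
\rHP_{R/I}(t) \;=\; \rHP_{R/(I,q)}(t) + t^m\,\rHP_{R/(I:q)}(t),
\end{equation*}
as desired. The only delicate point is the bookkeeping of the degree shift induced by multiplication by $q$; once that is made explicit, the rest is a standard application of additivity.
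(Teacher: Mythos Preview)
Your proof is correct and is the standard argument; the paper itself does not prove this proposition but merely cites it as a well-known fact (referring to \cite[Chapter 5]{Villareal}), so there is nothing to compare against.
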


\begin{proposition}\label{Hilber-tensoriale}
	Let $A$ and $B$ be standard graded $K$-algebras over a field $K$. Then $\rHP_{A \otimes_K B}(t)=\rHP_{A}(t) \cdot \rHP_{B}(t)$.
\end{proposition}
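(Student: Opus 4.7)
The plan is to exploit the natural grading on the tensor product. Since $A$ and $B$ are standard graded $K$-algebras, we have $A=\bigoplus_{i\geq 0} A_i$ and $B=\bigoplus_{j\geq 0} B_j$ with every $A_i,B_j$ a finite-dimensional $K$-vector space. First I would verify that distributivity of tensor product over direct sums yields
\[
A\otimes_K B \;=\; \bigoplus_{k\geq 0}\Bigl(\bigoplus_{i+j=k} A_i\otimes_K B_j\Bigr),
\]
and that multiplication respects this decomposition, which is immediate from $(a_i\otimes b_j)(a_{i'}\otimes b_{j'})=(a_i a_{i'})\otimes(b_j b_{j'}) \in (A\otimes_K B)_{i+j+i'+j'}$. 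Thus the inner sum defines the $k$-th graded component of $A\otimes_K B$ as a graded $K$-algebra.

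Next I would compute the Hilbert function degree by degree. Because tensor product of $K$-vector spaces over a field satisfies $\dim_K(V\otimes_K W)=(\dim_K V)(\dim_K W)$, and dimension is additive on direct sums, we obtain
\[
\mathrm{H}_{A\otimes_K B}(k) \;=\; \sum_{i+j=k}\dim_K(A_i\otimes_K B_j) \;=\; \sum_{i+j=k}\mathrm{H}_A(i)\,\mathrm{H}_B(j).
\]
This is precisely the coefficient of $t^k$ in the Cauchy product of the formal power series $\rHP_A(t)=\sum_{i\geq 0}\mathrm{H}_A(i)\,t^i$ and $\rHP_B(t)=\sum_{j\geq 0}\mathrm{H}_B(j)\,t^j$. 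Summing over $k\geq 0$ therefore gives $\rHP_{A\otimes_K B}(t)=\rHP_A(t)\cdot\rHP_B(t)$.

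There is essentially no obstacle: the only two ingredients are the standard dimension formula for the tensor product of finite-dimensional vector spaces over a field and the definition of the Cauchy product of formal power series. The hypothesis that $A$ and $B$ are \emph{standard} graded ensures each homogeneous piece $A_i$, $B_j$ is finite-dimensional, so that every coefficient appearing in the argument is well defined and the formal identity makes sense.
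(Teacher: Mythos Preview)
Your argument is correct and is the standard proof: decompose $(A\otimes_K B)_k=\bigoplus_{i+j=k}A_i\otimes_K B_j$, apply $\dim_K(V\otimes_K W)=(\dim_K V)(\dim_K W)$ for finite-dimensional $K$-vector spaces, and recognize the resulting convolution as the Cauchy product of the two series. The paper does not supply its own proof of this proposition; it states it as a well-known fact with a reference to \cite[Chapter~5]{Villareal}, so there is nothing further to compare.
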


\begin{remark} \rm
	We will often use also the following elementary fact: if $\mathbf{X}$ is a set of indeterminates, $\mathbf{X}_1,\mathbf{X}_2\subset \mathbf{X}$ form a partition of $\mathbf{X}$ into disjoint non-empty subsets and $I$ is an ideal of $K[\mathbf{X}]$, $K$ a field, such that each generator of $I$ belongs to $K[\mathbf{X}_j]$ for $j\in \{1,2\}$, then $K[\mathbf{X}]/I\cong K[\mathbf{X}_1]/I_1 \otimes_K K[\mathbf{X}_2]/I_2$, where $I_j=I\cap K[\mathbf{X}_j]$ for $j\in \{1,2\}$ (see \cite[Proposition 3.1.33]{Villareal}). 
\end{remark} 	 

\noindent If $\cP$ is a polyomino then $\rHP_{K[\cP]}(t)=\frac{h(t)}{(1-t)^d}$ for some $h(t)\in \mathbb{Z}[t]$ with $h(1)\neq0$, so we denote $h(t)=h_{K[\cP]}(t)$ along the paper. Finally, if $n\in \mathbb{N}$ as usual we denote by $[n]$ the set $\{1,2,\ldots,n\}$.\\

%

\section{Hilbert-Poincar\'e series of certain non-simple polyominoes}\label{Section:Poincare-Hilbert series of certain non-simple polyominoes}

\noindent In this section we examine a particular class of non-simple polyominoes that we introduce in the following definition.

\begin{definition}\rm\label{Definizione: (L,C)-polimino}
	Let $\cL$ be the union of the two cell intervals $[A,A_r]$, consisting of the cells $A,A_1,\ldots ,A_r$, and $[A,B_s]$, consisting of the cells $A,B_1,\ldots,B_s$, where $A,A_r$ and $A,B_s$ are respectively in horizontal and vertical position with $r,s\geq 2$. We denote by $a,b$ and $c,d$ respectively the diagonal and anti-diagonal corners of $A$, by $d_i$ and $a_i$ respectively the upper left and upper right corners of $B_i$ for $i\in [s]$ and by $b_j$ and $c_j$ respectively the upper and lower right corners of $A_j$ for $j\in[r]$. Let $\cC$ be a polyomino.  We say that a polyomino $\cP$ is an $(\mathcal{L},\mathcal{C})$\textit{-polyomino} if $\cP=\mathcal{L}\sqcup \mathcal{C}$ and it satisfies one and only one of the following four conditions (see also Figure~\ref{Figura:esempio P(L,C)}):
	\begin{enumerate}[(1)]
		\item $V(\cL)\cap V(\mathcal{\cC})=\{a_{s-1},a_s,b_{r-1},b_r\}$;
		\item $V(\cL)\cap V(\mathcal{\cC})=\{a_{s-1},a_s,c_{r-1},c_r\}$;
		\item $V(\cL)\cap V(\mathcal{\cC})=\{d_{s-1},d_s,b_{r-1},b_r\}$;
		\item $V(\cL)\cap V(\mathcal{\cC})=\{d_{s-1},d_s,c_{r-1},c_r\}$;
	\end{enumerate}
	\begin{figure}[h]
		\centering
		\subfloat[]{\includegraphics[scale=0.6]{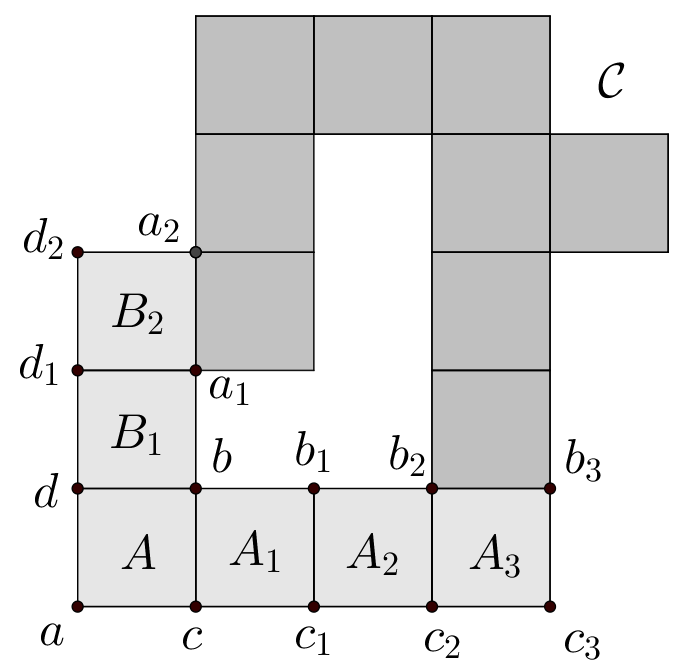}}
		\subfloat[]{\includegraphics[scale=0.6]{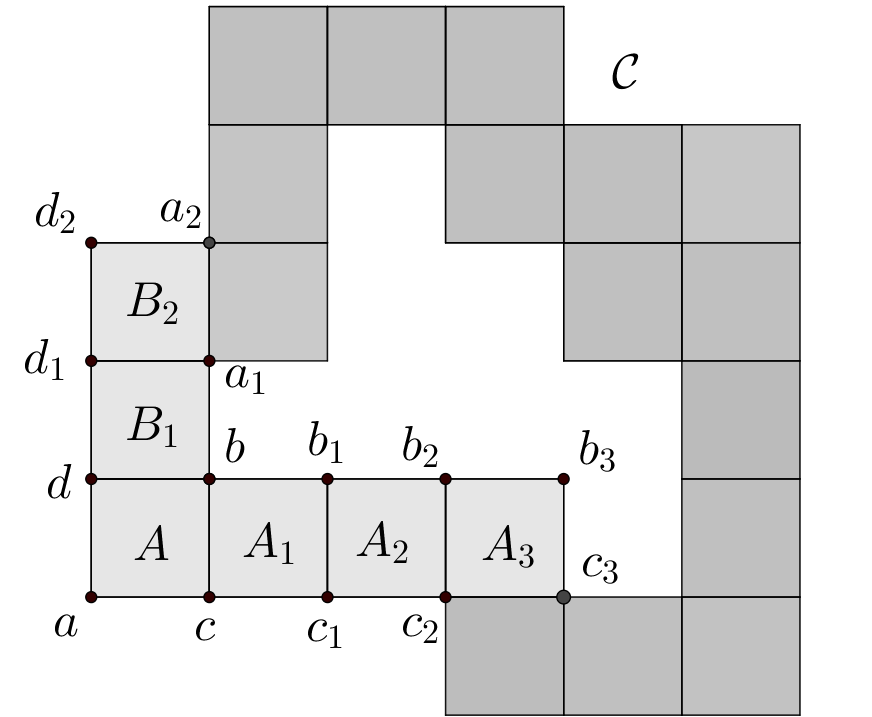}}
		\subfloat[]{\includegraphics[scale=0.6]{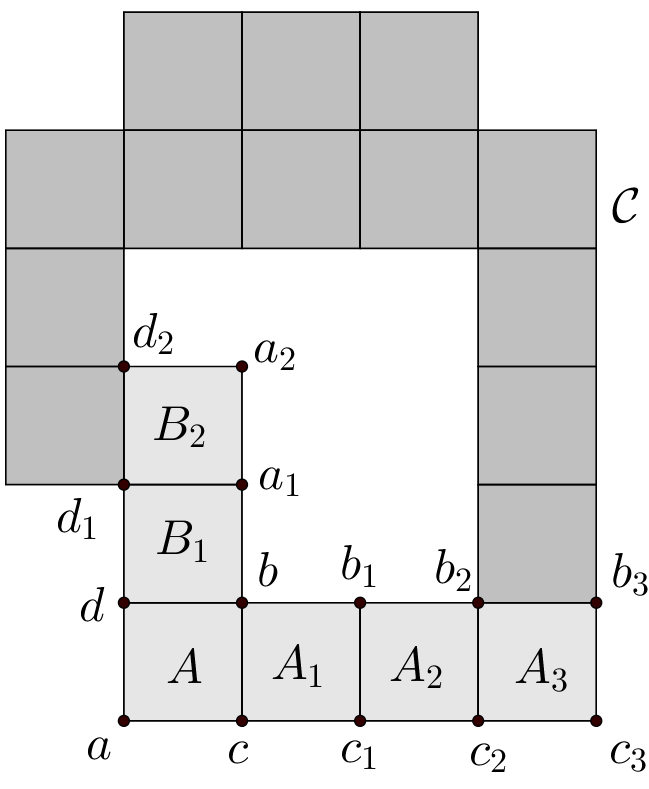}} 
		\subfloat[]{\includegraphics[scale=0.6]{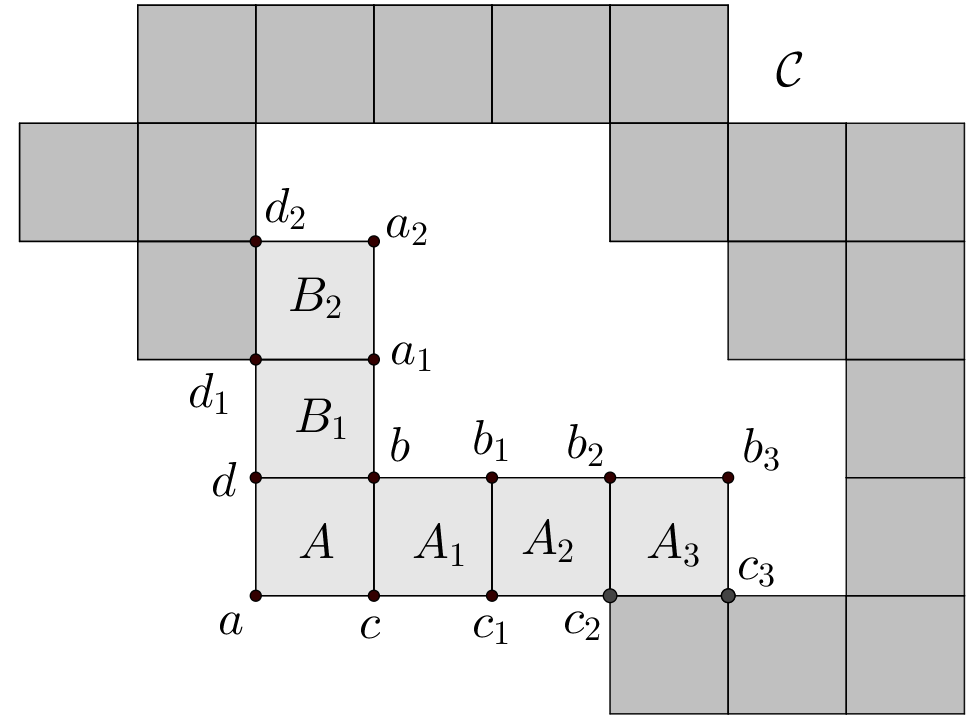}}
		\caption{Examples of the different cases of $(\cL,\cC)$-polyominoes}
		\label{Figura:esempio P(L,C)}
	\end{figure}
	If $\cP$ is an $(\mathcal{L},\mathcal{C})$-polyomino, the following related polyominoes will be used along the paper:
	\begin{itemize}
		\item $\cP_1=\cP\backslash [A,A_r]$; 
		\item $\cP_2=\cP\backslash [A,B_s]$; 
		\item $\cP_3=\cP\backslash ([A,A_r]\cup [A,B_s])=\mathcal{C}$;
		\item $\cP_4=\cP\backslash \{A,A_1,B_1\}$;
		\item $\cP'_1=\cP\backslash [A_1,A_r]$;
		\item $\cP'_2=\cP\backslash [B_1,B_s]$.  
	\end{itemize}
\end{definition}

\begin{lemma}
	Let $\cP$ be an $(\cL,\cC)$-polyomino. Then $S_\cP/(I_{\cP},x_a,x_d,x_c,x_b)\cong K[\cP_4]$.
	\label{isomorfismoP4}
\end{lemma}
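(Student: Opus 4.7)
The plan is to realize the quotient in two stages: first quotient $S_\cP$ by the linear forms $x_a, x_b, x_c, x_d$, then identify the image of $I_\cP$ with $I_{\cP_4}$.

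First, I would verify that $V(\cP_4) = V(\cP) \setminus \{a, b, c, d\}$. The four vertices $a, b, c, d$ are precisely the corners of $A$, and by inspection of $\cL$ they occur only in cells of $\{A, A_1, B_1\}$. It remains to check that $\{a, b, c, d\} \cap V(\cC) = \emptyset$: by the definition of $(\cL, \cC)$-polyomino, $V(\cL) \cap V(\cC)$ consists of four vertices at the far ends of the two arms (of type $a_{s-1}, a_s, b_{r-1}, b_r, c_{r-1}, c_r, d_{s-1}, d_s$), and since $r, s \geq 2$ none of these coincides with $a, b, c, d$. Moreover, the remaining vertices of $A_1$ and $B_1$, namely $b_1, c_1$ and $a_1, d_1$, are corners of the cells $A_2$ and $B_2$, which lie in $\cP_4$. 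This gives the identification $S_\cP / (x_a, x_b, x_c, x_d) \cong S_{\cP_4}$.

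Second, I would show that under this identification the image of $I_\cP$ equals $I_{\cP_4}$. Partition the inner intervals of $\cP$ into those whose cell interval meets $\{A, A_1, B_1\}$ and those that do not. For the latter class, all cells lie in $\cP_4$, so these are exactly the inner intervals of $\cP_4$, and their inner $2$-minors descend to the defining generators of $I_{\cP_4}$.

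For the former class, the key point is that the associated $2$-minor lies entirely in $(x_a, x_b, x_c, x_d)$. Using $\{a, b, c, d\} \cap V(\cC) = \emptyset$, I would first argue that no cell of $\cC$ can be edge-adjacent to $A$, $A_1$, or $B_1$, since such a cell would contain a vertex from $\{a, b, c, d\}$. Consequently, every inner interval of $\cP$ meeting $\{A, A_1, B_1\}$ is contained in the horizontal arm $[A, A_r]$ or the vertical arm $[A, B_s]$, and in particular is one cell wide. A direct check then shows that in each such case both monomials of the $2$-minor lie in $(x_a, x_b, x_c, x_d)$: for instance, the minor attached to $[a, b_k]$ is $x_a x_{b_k} - x_c x_{c_k}$, and that attached to $[d, b_k]$ (which contains $A_1$ but not $A$) is $x_d x_{b_k} - x_b x_{c_k}$; symmetric arguments handle the vertical arm and the intervals containing $B_1$. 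The main obstacle is precisely this arm-confinement step for inner intervals near the corner, in particular the borderline cases $r = 2$ or $s = 2$, where cells of $\cC$ approach the cell $A$; a careful case analysis against the four possibilities for $V(\cL) \cap V(\cC)$ is needed to conclude.
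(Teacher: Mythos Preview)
Your proposal is correct and follows essentially the same approach as the paper: both arguments amount to showing that $(I_\cP, x_a, x_b, x_c, x_d) = (I_{\cP_4}, x_a, x_b, x_c, x_d)$ by verifying that every inner $2$-minor of $\cP$ not already in $I_{\cP_4}$ lies in $(x_a, x_b, x_c, x_d)$. The paper simply writes out the decomposition of $I_\cP$ explicitly as $I_{\cP_4}$ plus the finite list of minors coming from inner intervals inside the two arms, whereas you reach the same conclusion by a structural argument (arm confinement of inner intervals meeting $\{A,A_1,B_1\}$); your extra care about the borderline cases $r=2$ or $s=2$ is not really needed, since the four $V(\cL)\cap V(\cC)$ possibilities already exclude $a,b,c,d$ from $V(\cC)$ regardless.
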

\begin{proof}
	Observe that $I_\cP$ can be written in the following way 
	\begin{align*}
	&I_\cP=I_{\cP_4}+ (x_a x_b -x_b x_c)+\sum_{i=1}^r(x_a x_{a_i}-x_c x_{d_i})+\sum_{i=1}^r(x_d x_{a_i}-x_b x_{d_i})+\\
	&\sum_{i=1}^s(x_a x_{b_i}-x_d x_{c_i})+\sum_{i=1}^r(x_c x_{b_i}-x_b x_{c_i}).
	\end{align*}
	It follows that $(I_{\cP},x_a,x_d,x_c,x_b)=(I_{\cP_4},x_a,x_d,x_c,x_b)
	$, in particular $$S_\cP/(I_{\cP},x_a,x_d,x_c,x_b)=S_\cP/(I_{\cP_4},x_a,x_d,x_c,x_b)\cong S_{\cP_4}/I_{\cP_4}=K[\cP_4]$$.
\end{proof}

\begin{proposition}\label{Proposizione: K[P_i] è dominio, sapendo che I è primo}
	Let $\cP$ be an $(\cL,\cC)$-polyomino. If $I_{\cP}$ is prime, then $K[\cP_i]$ and $K[\cP'_j]$ are domains for $i\in [4]$ and $j\in\{1,2\}$. 
\end{proposition}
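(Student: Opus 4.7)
The plan is to prove, for each $i\in[4]$ and $j\in\{1,2\}$, the ideal identity $I_{\cP_i} = I_\cP\cap S_{\cP_i}$ (and analogously $I_{\cP'_j} = I_\cP\cap S_{\cP'_j}$), viewing $S_{\cP_i}$ as a polynomial subring of $S_\cP$ via the inclusion $V(\cP_i)\subseteq V(\cP)$. Since the contraction of a prime ideal to a subring is again prime, this immediately implies that $K[\cP_i]$ and $K[\cP'_j]$ are domains. The easy inclusion $I_{\cP_i}\subseteq I_\cP\cap S_{\cP_i}$ holds because every inner interval of the subpolyomino $\cP_i\subseteq \cP$ is also an inner interval of $\cP$, so each inner $2$-minor of $\cP_i$ is an inner $2$-minor of $\cP$ already living in $S_{\cP_i}$.

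For the reverse inclusion I would use the specialization $\pi\colon S_\cP\to S_{\cP_i}$ which is the identity on $S_{\cP_i}$ and sends $x_v\mapsto 0$ for every $v\in W:=V(\cP)\setminus V(\cP_i)$; concretely, this is the quotient of $S_\cP=S_{\cP_i}[x_v:v\in W]$ by $(x_v:v\in W)$. Given $f\in I_\cP\cap S_{\cP_i}$, write $f=\sum_k g_k h_k$ with each $h_k$ an inner $2$-minor of $\cP$; since $\pi(f)=f$, one obtains $f=\sum_k \pi(g_k)\pi(h_k)$, and the argument reduces to two combinatorial claims. First, every inner $2$-minor of $\cP$ that already lies in $S_{\cP_i}$ is an inner $2$-minor of $\cP_i$. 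Second, every inner $2$-minor $h=x_px_q-x_rx_s$ of $\cP$ with at least one of $p,q,r,s$ in $W$ has each of its two monomials containing some variable from $W$, whence $\pi(h)=0$. Under these two claims, the only surviving terms in $\pi(f)$ are those with $h_k\in S_{\cP_i}$, and they express $f$ as an $S_{\cP_i}$-linear combination of generators of $I_{\cP_i}$, as desired.

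The main obstacle is verifying the two combinatorial claims by case analysis over the four attachment configurations in the definition of an $(\cL,\cC)$-polyomino and each of the six subpolyominoes $\cP_1,\cP_2,\cP_3,\cP_4,\cP'_1,\cP'_2$. The central geometric observation is that any proper inner interval of $\cP$ containing a cell removed in forming the subpolyomino must have two of its four corners on a boundary row or column of $[A,A_r]$ or $[A,B_s]$; apart from the narrow interface $V(\cL)\cap V(\cC)$ (exactly two vertices per arm in each configuration), these boundary vertices all lie in $W$, and the rectangular shape of the interval forces one such $W$-vertex to appear in each of the diagonal and antidiagonal monomials. For $\cP_4$ this is essentially already encoded by Lemma~\ref{isomorfismoP4}: each extra generator of $I_\cP$ beyond $I_{\cP_4}$ displayed there visibly carries one of $x_a,x_b,x_c,x_d$ on each side of the binomial. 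The cases where an entire arm of $\cL$ is removed — most notably $\cP_3=\cC$ — are the most delicate, but the two-vertex attachment of $\cC$ to each arm prevents any proper inner interval of $\cP$ from containing cells of both $\cL\setminus \cP_i$ and $\cP_i$ while keeping all four corners in $V(\cP_i)$.
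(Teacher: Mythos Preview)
Your approach — proving $I_{\cP_i}=I_\cP\cap S_{\cP_i}$ by applying the retraction $\pi\colon S_\cP\to S_{\cP_i}$ that kills the variables outside $V(\cP_i)$ — is essentially the paper's argument recast. The paper uses the toric presentation $I_\cP=\ker\phi$ and the modified map $\phi_\cV$ sending the $\cV$-variables to zero; showing $(I_\cP,x_v:v\in\cV)=\ker\phi_\cV$ reduces to exactly your claim~2, and the final identification with $K[\cP_i]$ reduces to your claim~1.

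The problem is that claim~2 is not true for every subpolyomino in every configuration, and your geometric heuristic does not detect this. Consider configuration~(4), where $V(\cL)\cap V(\cC)=\{d_{s-1},d_s,c_{r-1},c_r\}$, and take $\cP_1=\cP\setminus[A,A_r]$. The vertices of the removed block that survive in $V(\cP_1)$ are $b,c$ (shared with $B_1$) and $c_{r-1},c_r$ (shared with $\cC$), so
\[
W=V(\cP)\setminus V(\cP_1)=\{a,d,c_1,\dots,c_{r-2},b_1,\dots,b_r\}.
\]
The inner interval $[a,b_{r-1}]$ of $\cP$ (cells $A,A_1,\dots,A_{r-1}$) has diagonal corners $a,b_{r-1}\in W$ but anti-diagonal corners $c,c_{r-1}\in V(\cP_1)$; hence $\pi(x_ax_{b_{r-1}}-x_cx_{c_{r-1}})=-x_cx_{c_{r-1}}$, a nonzero monomial not lying in $I_{\cP_1}$, and your expression $f=\sum_k\pi(g_k)\pi(h_k)$ no longer lands in $I_{\cP_1}$. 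The same failure occurs for $\cP_1$ in configuration~(2) and, by the symmetry between the two arms, for $\cP_2$ in configurations~(3) and~(4). What goes wrong is that the surviving vertices of the removed arm lie on \emph{both} of its long edge intervals (two coming from $B_1$ on one side, two coming from $\cC$ on the other), so a rectangle inside the arm can have one pair of opposite corners entirely among the survivors. Your sentence ``the rectangular shape of the interval forces one such $W$-vertex to appear in each of the diagonal and antidiagonal monomials'' tacitly assumes all survivors sit on a single side, which only happens in configurations~(1) and~(3) for $\cP_1$ and in~(1) and~(2) for $\cP_2$.

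The paper itself only carries out configuration~(1) explicitly and asserts the others follow ``by similar arguments'', so you are not alone in leaving this unchecked; but as written your justification contains a false step, not merely an omitted one. Note that in all of the paper's applications $\cC$ is simple (indeed a path of cells), so each $\cP_i$ is a simple polyomino and the primality of $I_{\cP_i}$ follows directly from the known result that simple polyomino ideals are prime; you may wish to fall back on that in the problematic configurations rather than on the specialization argument.
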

\begin{proof}
	We may assume that $\cP$ is an $(\cL,\cC)$-polyomino such that $V(\cL)\cap V(\cC)=\{a_{s-1},a_s, b_{r-1},b_r\}$, since similar arguments can be used in the other cases. We prove that $K[\cP_1]$ is a domain. Observe that $I_\cP$ is a toric ideal since $I_{\cP}$ is a prime binomial ideal. Then there exists a map $\phi: S_{\cP}\to K[t_1^{\pm 1},\dots,t_d^{\pm1}]$ with $x_{ij}\mapsto \mathbf{t}^{\mathbf{a_{ij}}} \footnote{If $\mathbf{t}=(t_1,\dots,t_d)$ and $\mathbf{a}=(a_1,\dots,a_d)\in \ZZ^d$, then $\mathbf{t}^{\mathbf{a}}=t_1^{a_1}\dots t_d^{a_d}$.}$ for all $(i,j)\in V(\cP)$ such that $I_{\cP}=\ker \phi$. Let $\mathcal{V}=\{a,c,c_1,\dots,c_r,b_1,\dots,b_{r-2}\}$, we define $\phi_\mathcal{V}:S_{\cP}\to K[t_1^{\pm 1},\dots,t_d^{\pm1}]$, by $\phi_\cV(x_v)=0$ if $v\in \cV$ and $\phi_\cV(x_v)=\phi(x_v)$ otherwise. Put $J_\cV:=(I_{\cP},\{x_v\mid v\in \cV\})$, we prove that $J_\cV=\ker \phi_\cV$. If $f\in \ker \phi_\cV$, we can write $f=\tilde{f}+\beta g$ where $\beta\in S_\cP$, $g\in (\{x_v\mid v\in \cV\})$ and $\tilde{f}$ not containing variables in the set $\{x_v\mid v\in \cV\}$. Since $\phi_\cV(f)=0$, we have $\phi(\tilde{f})=0$, so $\tilde{f}\in \ker\phi=I_{\cP}$. For the other inclusion it suffices to prove that $I_\cP \subseteq \ker\phi_\cV$. In such a case observe that, for this configuration, if $f=x_{i_1}x_{i_2}-x_{j_1}x_{j_2}$ is a generator of $I_\cP$ then $\{x_{i_1},x_{i_2}\}\cap \{x_v\mid v\in \cV\}\neq \emptyset$ if and only if $\{x_{j_1},x_{j_2}\}\cap \{x_v\mid v\in \cV\}\neq \emptyset$, so in all possible cases we have $\phi_\cV(f)=0$. Therefore $J_\cV=\ker\phi_\cV$ and $J_\cV$ is a prime ideal. As in Lemma \ref{isomorfismoP4}, we have also that $J_\cV=(I_{\cP_1},x_a,x_c,x_{c_i},x_{b_j}:i\in[r],j\in[r-2])$, and $K[\cP_1]\cong S_{\cP}/J$ is a domain. The proof for this case is done. For the other polyominoes the proof is analogue, considering: 
\begin{itemize}	
	\item for $\cP_2$ the set $\cV=\{a,d,d_1,\dots,d_s,a_1,\dots,a_{s-2}\}$;
	\item for $\cP_3$ the set $\cV=\{a,b,c,d,c_1,\ldots, c_r, d_1,\dots,d_s,a_1,\dots,a_{s-2},b_1,\ldots,b_{r-2}\}$; 
	\item for $\cP_4$ the set $\cV=\{a,b,c,d\}$;
	\item for $\cP_1'$ the set $\cV=\{b_1,\dots,b_{r-2},c_1,\dots,c_r\}$;
	\item for $\cP_2'$ the set $\cV=\{a_1,\dots,a_{s-2},d_1,\dots,d_s\}$.
	\end{itemize}
\end{proof}

\noindent If $\cP$ is an $(\cL,\cC)$-polyomino, our aim is to provide a formula for the Hilbert-Poincar\'e series of $K[\cP]$, involving the Hilbert-Poincar\'e series of $K[\cP_1]$, $K[\cP_2]$, $K[\cP_3]$ and $K[\cP_4]$ in the hypotheses that $K[\cP]$ is an integral domain. In particular, if $(i_1,i_2,i_3,i_4)$ is a permutation of the set $\{a,b,c,d\}$, our strategy consists in considering the following four short exact sequences:

\begin{footnotesize}
$$0 \longrightarrow S_\cP/(I_{\cP}:x_{i_1})  \longrightarrow S_\cP/I_{\cP}  \longrightarrow S_\cP/(I_{\cP},x_{i_1}) \longrightarrow0  $$
$$0 \longrightarrow S_\cP/((I_{\cP},x_{i_1}):x_{i_2})  \longrightarrow S_\cP/(I_{\cP},x_{i_1}) \longrightarrow S_\cP/(I_{\cP},x_{i_1},x_{i_2}) \longrightarrow0  $$
$$0 \longrightarrow S_\cP/((I_{\cP},x_{i_1},x_{i_2}):x_{i_3})  \longrightarrow S_\cP/(I_{\cP},x_{i_1},x_{i_2}) \longrightarrow S_\cP/(I_{\cP},x_{i_1},x_{i_2},x_{i_3}) \longrightarrow0  $$
$$0 \longrightarrow S_\cP/((I_{\cP},x_{i_1},x_{i_2},x_{i_3}):x_{i_4})  \longrightarrow S_\cP/(I_{\cP},x_{i_1},x_{i_2},x_{i_3}) \longrightarrow S_\cP/(I_{\cP},x_a,x_d,x_c,x_b) \longrightarrow0  $$
\end{footnotesize}

\noindent From the exact sequences above, we will obtain the Hilbert-Poincar\'e series of $S_\cP/I_\cP$ by a repeated application of Proposition~\ref{Proposizione: la prima che serve per calcolare HP} and considering in each case a suitable permutation $(i_1,i_2,i_3,i_4)$ of the set $\{a,b,c,d\}$ in order to compute the Hilbert-Poincar\'e series of the rings in the intermediate steps. To reach our aim we provide several preliminary lemmas, distinguishing the different possibilities for the set $V(\cL)\cap V(\mathcal{\cC})$.

\begin{lemma}\label{Lemma: column+prodotti tensoriali - primo caso}
	Let $\cP$ be an  $(\cL,\cC)$-polyomino such that $V(\cL)\cap V(\mathcal{\cC})=\{a_{s-1},a_s,b_{r-1},b_r\}$. Suppose that $I_{\cP}$ is prime. Then:
	\begin{enumerate}[(1)] 
		\item $S_\cP/((I_\cP,x_a):x_d)\cong K[\cP_1]\otimes_K K[x_{b_1},\dots,x_{b_{r-2}}]$;
		\item $S_\cP/((I_\cP,x_a,x_d):x_c)\cong K[\cP_2]\otimes_K K[x_{a_1},\dots,x_{a_{s-2}}]$;
		\item $S_\cP/((I_\cP,x_a,x_d,x_c):x_b)\cong K[\cP_3]\otimes_K K[x_b,x_{a_1},\dots,x_{a_{s-2}},x_{b_1},\dots,x_{b_{r-2}}]$.
	\end{enumerate} 
\end{lemma}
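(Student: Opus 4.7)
For each of the three isomorphisms I will produce an explicit ideal $J_\ell$ of $S_\cP$ whose quotient equals the right-hand side, and then show that the corresponding colon ideal equals $J_\ell$. Set
\[
J_1 = (I_{\cP_1},\, x_a,\, x_c,\, x_{c_1},\ldots,x_{c_r}),
\qquad
J_2 = (I_{\cP_2},\, x_a,\, x_d,\, x_{d_1},\ldots,x_{d_s}),
\]
\[
J_3 = (I_{\cP_3},\, x_a,\, x_c,\, x_d,\, x_{d_1},\ldots,x_{d_s},\, x_{c_1},\ldots,x_{c_r}).
\]
Quotienting $S_\cP$ by $J_\ell$ kills precisely the variables corresponding to vertices absent from the target tensor product, so that $S_\cP/J_\ell$ identifies immediately with $K[\cP_\ell]\otimes_K K[\textrm{free variables}]$. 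Combining Proposition~\ref{Proposizione: K[P_i] è dominio, sapendo che I è primo} (which gives that $K[\cP_\ell]$ is a domain) with the fact that polynomial rings over a domain are domains shows that each $J_\ell$ is prime.

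The containment $J_\ell\subseteq(\textrm{colon})$ is short: the variables $x_a, x_c, x_d$ lie in the ideal being colon-divided and hence are automatically in the colon. The remaining killed variables appear through specific corner minors of $I_\cP$. For part (1), the minor $x_a x_{b_j}-x_d x_{c_j}$ reduces modulo $x_a$ to $-x_d x_{c_j}$, placing $x_{c_j}\in (I_\cP,x_a):x_d$. For part (2), the minor $x_a x_{a_i}-x_c x_{d_i}$ reduces modulo $x_a,x_d$ to $-x_c x_{d_i}$, so $x_{d_i}\in (I_\cP,x_a,x_d):x_c$. For part (3), the minors $x_d x_{a_i}-x_{d_i}x_b$ and $x_c x_{b_j}-x_b x_{c_j}$ yield $x_{d_i},x_{c_j}\in (I_\cP,x_a,x_d,x_c):x_b$ after reducing modulo $x_d$ and $x_c$ respectively. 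Finally $I_{\cP_\ell}\subseteq I_\cP$ is in every colon automatically.

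For the reverse inclusion I plan to prove $I_\cP\subseteq J_\ell$, which suffices: since $J_\ell$ is prime, and since the divisor $x_\star$ (where $\star=d,c,b$ in parts $(1),(2),(3)$) corresponds to a vertex surviving in $S_\cP/J_\ell$, one has $(\textrm{colon})\subseteq J_\ell:x_\star = J_\ell$. To verify $I_\cP\subseteq J_\ell$, I would argue interval by interval: if an inner interval $[p,q]$ of $\cP$ is contained in $\cP_\ell$ then its $2$-minor lies in $I_{\cP_\ell}$; otherwise $[p,q]$ uses a cell of the removed block $[A,A_r]$ or $[A,B_s]$, and the case $(1)$ hypothesis $V(\cL)\cap V(\cC)=\{a_{s-1},a_s,b_{r-1},b_r\}$ forces the two lower (resp.\ two left) corners of that interval to lie on the bottom row $\{a,c,c_1,\ldots,c_r\}$ of $[A,A_r]$ (resp.\ on the left column $\{a,d,d_1,\ldots,d_s\}$ of $[A,B_s]$). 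Consequently both monomials of the $2$-minor are divisible by one of the killed variables, and the minor lies in $J_\ell$.

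The principal obstacle is the geometric classification underlying $I_\cP\subseteq J_\ell$: one must verify that no inner interval crossing into the removed block can have a lower corner outside the specified boundary set. The intersection condition $V(\cL)\cap V(\cC)=\{a_{s-1},a_s,b_{r-1},b_r\}$, together with the fact that $\cC$ cannot share further vertices with the bottom edge of $[A,A_r]$ or the left edge of $[A,B_s]$, is exactly what confines those corners to the row or column of killed variables. Once this local geometric picture is established the argument becomes routine.
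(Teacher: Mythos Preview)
Your proposal is correct and follows essentially the same route as the paper: both arguments identify the same candidate ideals $J_\ell$, use Proposition~\ref{Proposizione: K[P_i] è dominio, sapendo che I è primo} to see that $J_\ell$ is prime, obtain the easy inclusion $J_\ell\subseteq(\text{colon})$ from the explicit inner $2$-minors, and deduce the reverse inclusion from primality together with $x_\star\notin J_\ell$. Your formulation of the hard inclusion---show $(I_\cP,\text{variables})\subseteq J_\ell$ and then conclude $(\text{colon})\subseteq J_\ell:x_\star=J_\ell$---is just a cleaner packaging of the paper's explicit rearrangement of a generic element $x_\star f$ as a combination of generators, and the ``interval-by-interval'' verification you outline is exactly the content of the paper's decomposition of $I_\cP$ as $I_{\cP_\ell}$ plus the listed corner minors.
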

\begin{proof}
	(1) Firstly observe that $I_{\cP}$ can be written in the following way:
	\begin{align*}
		&I_{\cP}=I_{\cP_1}+(x_ax_b-x_cx_d)+\sum_{i=1}^{s}(x_ax_{a_i}-x_cx_{d_i})+\sum_{j=1}^{r}(x_ax_{b_j}-x_dx_{c_j})+\\
		&\sum_{j=1}^{r}(x_cx_{b_j}-x_bx_{c_j})+\sum_{k,l\in[r]\atop k<l}(x_{c_k}x_{b_l}-x_{c_l}x_{b_k})+\\
		&(\{x_{c_{r-1}}x_{v}-x_{c_r}x_{u}\vert [c_{r-1},v]\in\cI(\cP), u=v-(1,0)\}),
	\end{align*}
	Now we describe the ideal $(I_{\cP},x_a)$ in $S_\cP$:
	\begin{align*}
		&(I_{\cP},x_a)=(I_{\cP_1},x_a)+(x_cx_d)+\sum_{i=1}^{s}(x_cx_{d_i})+\sum_{j=1}^{r}(x_dx_{c_j})+\\
		&\sum_{j=1}^{r}(x_cx_{b_j}-x_bx_{c_j})+\sum_{k,l\in[r]\atop k<l}(x_{c_k}x_{b_l}-x_{c_l}x_{b_k})+\\
		&(\{x_{c_{r-1}}x_{v}-x_{c_r}x_{u}\vert [c_{r-1},v]\in\cI(\cP), u=v-(1,0)\}).
	\end{align*}

	\noindent We prove that $(I_{\cP},x_a):x_d=(I_{\cP_1},x_a)+(x_c)+\sum_{i=1}^{r}(x_{c_i})$. It follows trivially from the previous equality that $(I_{\cP},x_a):x_d\supseteq(I_{\cP_1},x_a)+(x_c)+\sum_{i=1}^{r}(x_{c_i})$. Let $f\in S_\cP$ such that $x_df\in (I_\cP,x_a)$. Then 
	\begin{align*}
		&x_df=g+\alpha x_a+\beta x_cx_d+\sum_{i=1}^{s}\gamma_i x_cx_{d_i}+\sum_{j=1}^{r}\delta_j x_dx_{c_j}+\sum_{i=j}^{r}\omega_{j}(x_cx_{b_j}-x_bx_{c_j})+\\
		&+\sum_{k,l\in[r]\atop k<l}\nu_{kl}(x_{c_k}x_{b_l}-x_{c_l}x_{b_k})+\sum_{ [c_{r-1},v]\in\cI(\cP)\atop u=v-(1,0)} \lambda_{v}(x_{c_{r-1}}x_{v}-x_{c_r}x_{u}),
	\end{align*}
	where $g\in \cI_{\cP_1}$, $\alpha,\beta,\gamma_i,\delta_j,\omega_{j},\nu_{k,l}\lambda_{v}\in S_\cP$ for all $i,k\in[s],j,l\in[r]$ and for all $v\in V(\cP)$ such that $[c_{r-1},v]\in\cI(\cP)$. As a consequence:
	\begin{align*}
		&x_d\biggl(f-\beta x_c-\sum_{j=1}^{r}\delta_j x_{c_j}\biggr)=g+\alpha x_a+\sum_{i=1}^{s}(\gamma_i x_{d_i})x_c+\sum_{i=j}^{r}(\omega_{j}x_{b_j})x_c-\sum_{i=j}^{r}(\omega_jx_b)x_{c_j}+\\
		&+\sum_{k,l\in[r]\atop k<l}(\nu_{kl}x_{b_l})x_{c_k}-\sum_{k,l\in[r]\atop k<l}(\nu_{kl}x_{b_k})x_{c_l}+\biggl(\sum_{ [c_{r-1},v]\in\cI(\cP)\atop u=v-(1,0)} \lambda_{v}x_{v}\biggr)x_{c_{r-1}}+\\
		&-\biggl(\sum_{ [c_{r-1},v]\in\cI(\cP)\atop u=v-(1,0)}\lambda_{v}x_{u}\biggr)x_{c_r}.
	\end{align*}
	Hence we obtain that $x_d\bigl(f-\beta x_c-\sum_{j=1}^{r}\delta_jx_{c_j}\bigr)\in I_{\cP_1}+(x_a)+(x_c)+\sum_{i=1}^{r}(x_{c_i})$. Since $K[\cP_1]$ is a domain (Proposition \ref{Proposizione: K[P_i] è dominio, sapendo che I è primo}) and $a,c,c_i\notin V(\cP_1)$ for all $i\in[r]$ then $I_{\cP_1}+(x_a)+(x_c)+\sum_{i=1}^{r}(x_{c_i})$ is a prime ideal in $S_\cP$. Since $x_d\notin I_{\cP_1}$, we have $f-\beta x_c-\sum_{j=1}^{r}\delta_jx_{c_j}\in I_{\cP_1}+(x_a)+(x_c)+\sum_{i=1}^{r}(x_{c_i})$, so $f\in I_{\cP_1}+(x_a)+(x_c)+\sum_{i=1}^{r}(x_{c_i})$, that is $(I_{\cP},x_a):x_d\subseteq(I_{\cP_1},x_a)+(x_c)+\sum_{i=1}^{r}(x_{c_i})$. In conclusion we have $(I_{\cP},x_a):x_d=(I_{\cP_1},x_a)+(x_c)+\sum_{i=1}^{r}(x_{c_i})$ and as a consequence
	$S_\cP/((I_\cP,x_a):x_d)=S_\cP/(I_{\cP_1}+(x_a,x_c,x_{c_1},\dots,x_{c_r}))\cong S_{\cP_1}/I_{\cP_1}\otimes_K K[x_v\mid v\in V(\cP\setminus \cP_1)]/(x_a,x_c,x_{c_1},\dots,x_{c_r}) = K[\cP_1]\otimes_K K[x_{b_1},\dots,x_{b_{r-2}}]$. The claim (1) is proved.\\
	(2) By similar computations as in the first part of (1) we can prove that $(I_\cP,x_a,x_d):x_c=(I_{\cP_2},x_a,x_d)+\sum_{i=1}^s(x_{d_i})$, so claim (2) follows by using similar arguments as in the last part in (1).\\
	(3) The argument is the same, considering that $(I_\cP,x_a,x_d,x_c):x_b=(I_{\cP_3},x_a,x_d,x_c)+\sum_{i=1}^s(x_{d_i})+\sum_{j=1}^r(x_{c_i})$ can be proved using computations similar to the previous cases.
\end{proof}

\noindent In the previous result we examine, for a $(\cL,\cC)$-polyomino, the case $V(\cL)\cap V(\mathcal{\cC})=\{a_{s-1},a_s,b_{r-1},b_r\}$. In order to examine the other cases we need other preliminary results involving the polyominoes $\cP'_1$ and $\cP'_2$.

\begin{lemma}
	Let $\cP$ be an $(\cL,\cC)$-polyomino. Then
	\begin{enumerate}
	 \item[(1)] $S_{\cP_2'} /(I_{\cP'_2},x_b,x_c)\cong K[\cP_2]$. Moreover, if $I_{\cP_2}$ is a prime ideal then $(I_{\cP'_2},x_b,x_c)$ is a prime ideal of $S_\cP$.
	 \item[(2)] $S_{\cP'_1} /(I_{\cP'_1},x_b,x_d)\cong K[\cP_1]$. Moreover, if $I_{\cP_1}$ is a prime ideal then $(I_{\cP'_1},x_b,x_d)$ is a prime ideal of $S_\cP$.
\end{enumerate}	
	\label{P'_2}
\end{lemma}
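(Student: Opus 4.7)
The plan is to exploit the fact that $\cP_2 = \cP'_2 \setminus \{A\}$, namely that $\cP'_2 = [A, A_r] \cup \cC$ differs from $\cP_2 = [A_1, A_r] \cup \cC$ by the single corner cell $A$, and hence $V(\cP'_2) \setminus V(\cP_2) = \{a, c\}$ (the two corners of $A$ which, after removal of the vertical arm, belong to no other cell of $\cP'_2$). I would first describe the ideal by means of
\[
I_{\cP'_2} = I_{\cP_2} + \bigl(x_a x_{b_k} - x_c x_{c_k} \,:\, k = 0, 1, \dots, r\bigr),
\]
with the convention $b_0 = b$, $c_0 = d$: the extra generators are precisely the inner $2$-minors of $\cP'_2$ coming from the inner intervals $[a, b_k]$, which cover cell $A$ and extend along the horizontal arm (the other directions are blocked because $\cC$ meets $\cL$ only at the far ends of the two arms, in each of the four cases of Definition~\ref{Definizione: (L,C)-polimino}).

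The next step is to check that, after adjoining the two linear forms named in the statement, each extra generator is absorbed into the ideal, so that $(I_{\cP'_2}, x_b, x_c)$ reduces to $(I_{\cP_2}, x_b, x_c)$ in $S_{\cP'_2}$. Using the decomposition $S_{\cP'_2} = S_{\cP_2}[x_a, x_c]$, one then obtains
\[
S_{\cP'_2}/(I_{\cP'_2}, x_b, x_c) \;\cong\; S_{\cP_2}/I_{\cP_2} \;=\; K[\cP_2],
\]
as required.

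The primality statement is a formal consequence: if $I_{\cP_2}$ is prime, $K[\cP_2]$ is an integral domain, and so is the isomorphic ring $S_{\cP'_2}/(I_{\cP'_2}, x_b, x_c)$; hence $(I_{\cP'_2}, x_b, x_c)$ is a prime ideal of $S_{\cP'_2}$. To conclude that it is prime in the larger ring $S_\cP$, observe that $S_\cP$ is a polynomial extension of $S_{\cP'_2}$ in the indeterminates indexed by $V(\cP) \setminus V(\cP'_2)$, and that polynomial extensions preserve primality.

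Part (2) is proved by the same method with $[A, B_s]$ in place of $[A, A_r]$: one has $\cP_1 = \cP'_1 \setminus \{A\}$, the extra inner intervals of $\cP'_1$ are $[a, a_k]$ for $k = 0, 1, \dots, s$ (with $a_0 = b$, $d_0 = c$), the corresponding $2$-minors are $x_a x_{a_k} - x_{d_k} x_d$, and these are absorbed after adjoining $x_b$ and $x_d$. The step I expect to require the most care is the redundancy verification for each of the four configurations of Definition~\ref{Definizione: (L,C)-polimino}: one must check that no inner interval covering $A$ has been overlooked and that modulo the two added linear forms every such $2$-minor is genuinely absorbed; the other steps (identification of vertex sets and invocation of polynomial extension) are routine.
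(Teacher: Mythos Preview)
Your argument has a genuine gap, and it stems from a misreading of which two corners of $A$ survive in $\cP_2$. In the labeling fixed in Definition~\ref{Definizione: (L,C)-polimino} (as one sees from the decomposition in Lemma~\ref{Lemma: column+prodotti tensoriali - primo caso}), $c$ is the \emph{lower right} and $d$ the \emph{upper left} corner of $A$. Hence $b,c$ are precisely the two corners that $A$ shares with $A_1$, so $b,c\in V(\cP_2)$ and $V(\cP'_2)\setminus V(\cP_2)=\{a,d\}$, not $\{a,c\}$.

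With this straightened out, your key redundancy claim fails. The extra inner intervals of $\cP'_2$ over $\cP_2$ are the $[a,b_k]$, whose anti-diagonal corners are $d$ and $c_k$; the corresponding minors are $x_ax_{b_k}-x_dx_{c_k}$. For $k\geq 1$ these involve neither $x_b$ nor $x_c$, so they are \emph{not} killed by adjoining $x_b,x_c$, and one checks directly that $x_ax_{b_k}-x_dx_{c_k}\notin (I_{\cP_2},x_b,x_c)$ in $S_{\cP'_2}$. Thus $(I_{\cP'_2},x_b,x_c)\supsetneq(I_{\cP_2},x_b,x_c)$. Moreover, even granting that equality, the final step would still break: since $b,c\in V(\cP_2)$ and $S_{\cP'_2}=S_{\cP_2}[x_a,x_d]$, one gets
\[
S_{\cP'_2}/(I_{\cP_2},x_b,x_c)\;\cong\;\bigl(K[\cP_2]/(x_b,x_c)\bigr)[x_a,x_d],
\]
which is a polynomial extension of a proper quotient of $K[\cP_2]$, not $K[\cP_2]$ itself.

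The paper sidesteps this by \emph{relabeling} rather than discarding. It introduces the polyomino $\cR$ obtained from $\cP_2$ by renaming the vertices $b,c$ (the left column of $[A_1,A_r]$) as $d,a$; then $I_\cR$ lives in $K[x_v:v\in V(\cP'_2)\setminus\{b,c\}]$ and one has
\[
I_{\cP'_2}=I_\cR+(x_ax_b-x_cx_d)+\sum_{i=1}^r(x_cx_{b_i}-x_bx_{c_i}).
\]
Here every extra generator visibly lies in $(x_b,x_c)$, so $(I_{\cP'_2},x_b,x_c)=(I_\cR,x_b,x_c)$ and the quotient is $K[\cR]\cong K[\cP_2]$. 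In short, the isomorphism with $K[\cP_2]$ is realised by the substitution $x_b\mapsto x_d$, $x_c\mapsto x_a$, not by simply deleting variables.
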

\begin{proof}
	(1) Let $\mathcal{R}$ be the polyomino obtained from the cells of $\cP_2$ and renaming the vertices $b$ and $c$ respectively by $d$ and $a$, in particular $S_\mathcal{R}=K[x_v \mid v\in V(\cP'_2)\setminus \{b,c\}]$. Observe that
	$$I_{\cP'_2}=I_\mathcal{R}+ (x_a x_b-x_c x_d)+\sum_{i=1}^r(x_c x_{b_i}-x_b x_{c_i})$$
	So $(I_{\cP'_2},x_b,x_c)=(I_\mathcal{R},x_b,x_c)$ and in particular $S_{\cP'_2}/(I_{\cP'_2},x_b,x_c)=S_{\cP'_2}/(I_\mathcal{R},x_b,x_c)\cong S_\mathcal{R}/I_\mathcal{R} = K[\mathcal{R}]\cong K[\cP_2]$, since $x_b,x_c$ do not belong to the support of any element of $I_\mathcal{R}$ and observing that, apart from the name of the vertices involved, $\mathcal{R}=\cP_2$. Furthermore $S_\cP/(I_{\cP'_2},x_b,x_c)\cong S_{\cP'_2}/(I_{\cP'_2},x_b,x_c)\otimes_K K[x_v\mid v\in V(\cP\setminus \cP'_2)]\cong K[\cP_2]\otimes_K K[x_v\mid v\in V(\cP\setminus \cP'_2)]$, so also the last claim follows. \\
	(2) The result can be obtained arguing as in the proof of (1). Indeed the arrangements involved in these situations can be considered the same up to one reflection and one rotation.
\end{proof}

\begin{lemma}\label{Lemma: column+prodotti tensoriali - secondo caso}
	Let $\cP$ be an  $(\cL,\cC)$-polyomino such that $V(\cL)\cap V(\mathcal{\cC})=\{d_{s-1},d_s,b_{r-1},b_r\}$. Suppose that $I_{\cP}$ is prime. Then:
	\begin{enumerate}[(1)] 
		\item $S_\cP/((I_\cP,x_c):x_b)\cong K[\cP_1]\otimes_K K[x_{b_1},\dots,x_{b_{r-2}}]$;
		\item $S_\cP/((I_\cP,x_b,x_c):x_a)\cong K[\cP_2]\otimes_K K[x_{d_1},\dots,x_{d_{s-2}}]$;
		\item $S_\cP/((I_\cP,x_a,x_b,x_c):x_d)\cong K[\cP_3]\otimes_K K[x_d,x_{b_1},\dots,x_{b_{r-2}},x_{d_1},\dots,x_{d_{s-2}}]$.
	\end{enumerate} 
\end{lemma}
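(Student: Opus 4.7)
The plan is to adapt the proof of Lemma \ref{Lemma: column+prodotti tensoriali - primo caso} to the new attachment configuration. Since case 3 is obtained from case 1 by a horizontal reflection of $\cL$, the pairs $\{a,d\}$ and $\{c,b\}$ exchange their roles, and the permutation of corners of $A$ driving the chain of four short exact sequences becomes $(c,b,a,d)$ in place of $(a,d,c,b)$. In each of the three parts the argument consists of identifying the corresponding colon ideal as the sum of an ideal of a smaller polyomino, a monomial piece generated by a subset of $\{x_a,x_b,x_c,x_d\}$, and a family of variables $\{x_{c_j}\}$ or $\{x_{a_i}\}$ (or both), and then splitting the resulting quotient as a tensor product over $K$.

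For part (1) I would first decompose $I_\cP$ as $I_{\cP_1}$ plus the $2$-minors involving at least one cell of $[A,A_r]$: the $2$-minor $x_ax_b-x_cx_d$ of $A$, the vertical minors $x_ax_{a_i}-x_cx_{d_i}$ and $x_dx_{a_i}-x_bx_{d_i}$ for $i\in[s]$, the horizontal minors $x_ax_{b_j}-x_dx_{c_j}$ and $x_cx_{b_j}-x_bx_{c_j}$ for $j\in[r]$, the minors $x_{c_k}x_{b_l}-x_{c_l}x_{b_k}$ for $k<l$ in $[r]$, and the minors $x_{c_{r-1}}x_v-x_{c_r}x_u$ coming from the cells of $\cC$ attached to the right of $A_r$. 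Setting $x_c=0$, the minor of $A$ reduces to $x_ax_b$ and $x_cx_{b_j}-x_bx_{c_j}$ to $-x_bx_{c_j}$, so $x_a,x_{c_1},\dots,x_{c_r}$ belong to $(I_\cP,x_c):x_b$ at once. The nontrivial inclusion
\[
(I_\cP,x_c):x_b\ \subseteq\ J\lik (I_{\cP_1},x_c)+(x_a)+\sum_{j=1}^{r}(x_{c_j})
\]
is established, exactly as in the cited lemma: given $x_bf\in(I_\cP,x_c)$, expand it as a combination of the listed generators, absorb every summand proportional to $x_b$ into $f$ by subtracting suitable multiples of $x_a$ and the $x_{c_j}$'s, and conclude that the remainder of $f$ times $x_b$ lies in $J$. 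Proposition \ref{Proposizione: K[P_i] è dominio, sapendo che I è primo} guarantees that $K[\cP_1]$ is a domain, hence $J$ is prime and does not contain $x_b$, so $f\in J$. Since $x_{b_1},\dots,x_{b_{r-2}}$ do not appear in any generator of $J$, splitting $S_\cP$ along the induced partition of its indeterminates yields the desired $S_\cP/((I_\cP,x_c):x_b)\cong K[\cP_1]\otimes_K K[x_{b_1},\dots,x_{b_{r-2}}]$.

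Part (2) proceeds by the same scheme but uses the polyomino $\cP'_2$ in place of $\cP_2$: one shows
\[
(I_\cP,x_b,x_c):x_a\ =\ (I_{\cP'_2},x_b,x_c)+\sum_{i=1}^{s}(x_{a_i}),
\]
the absorption step extracting $x_{a_i}$ from the vertical minor $x_ax_{a_i}-x_cx_{d_i}$ after reducing modulo $x_c$. The switch from $\cP_2$ to $\cP'_2$ is necessary because the corners $b,c$ lie in $V(\cP_2)$, so a direct splitting with $K[\cP_2]$ is not possible; instead one applies Lemma \ref{P'_2}(1), which identifies $S_{\cP'_2}/(I_{\cP'_2},x_b,x_c)$ with $K[\cP_2]$, and combines it with the standard splitting to obtain $S_\cP/((I_\cP,x_b,x_c):x_a)\cong K[\cP_2]\otimes_K K[x_{d_1},\dots,x_{d_{s-2}}]$. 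Part (3) is analogous and needs only $\cP_3$: one proves
\[
(I_\cP,x_a,x_b,x_c):x_d\ =\ (I_{\cP_3},x_a,x_b,x_c)+\sum_{i=1}^{s}(x_{a_i})+\sum_{j=1}^{r}(x_{c_j}),
\]
after which splitting $S_\cP$ along the partition of its indeterminates into $V(\cP_3)=V(\cC)$ and $\{a,b,c,d,a_1,\dots,a_s,c_1,\dots,c_r,b_1,\dots,b_{r-2},d_1,\dots,d_{s-2}\}$ gives $S_\cP/((I_\cP,x_a,x_b,x_c):x_d)\cong K[\cP_3]\otimes_K K[x_d,x_{b_1},\dots,x_{b_{r-2}},x_{d_1},\dots,x_{d_{s-2}}]$.

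The main obstacle throughout is the bookkeeping in each colon computation: one must list every generator of $I_\cP$ containing the cleared variables, identify those divisible by the dividing variable, absorb them into the cofactor, and only then invoke the correct primality result (Proposition \ref{Proposizione: K[P_i] è dominio, sapendo che I è primo} in every step, and additionally Lemma \ref{P'_2} in part (2)). Once the colon ideals are identified, the tensor-product isomorphisms reduce to the standard splitting of a polynomial ring along a partition of its variables.
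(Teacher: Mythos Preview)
Your proposal is correct and follows essentially the same route as the paper: you identify the three colon ideals $(I_\cP,x_c):x_b=(I_{\cP_1},x_c)+(x_a)+\sum_{j=1}^r(x_{c_j})$, $(I_\cP,x_b,x_c):x_a=(I_{\cP'_2},x_b,x_c)+\sum_{i=1}^s(x_{a_i})$, and $(I_\cP,x_a,x_b,x_c):x_d=(I_{\cP_3},x_a,x_b,x_c)+\sum_{i=1}^s(x_{a_i})+\sum_{j=1}^r(x_{c_j})$ exactly as the paper does, and you correctly pinpoint that the novelty over Lemma~\ref{Lemma: column+prodotti tensoriali - primo caso} lies in part~(2), where one must pass through $\cP'_2$ and invoke Lemma~\ref{P'_2} both for the primality of $(I_{\cP'_2},x_b,x_c)$ and for the isomorphism $S_{\cP'_2}/(I_{\cP'_2},x_b,x_c)\cong K[\cP_2]$. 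The only cosmetic slip is that in your decomposition for part~(1) the minors $x_dx_{a_i}-x_bx_{d_i}$ already lie in $I_{\cP_1}$, so listing them separately is redundant, but this does not affect the argument.
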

\begin{proof}
	Arguing as in Lemma~\ref{Lemma: column+prodotti tensoriali - primo caso}, we obtain the equalities of the following ideals:
	\begin{enumerate}[(1)]
		\item $(I_\cP,x_c):x_b=(I_{\cP_1},x_c)+(x_a)+\sum_{i=1}^r(x_{c_i})$
		\item $(I_\cP,x_b,x_c):x_a=(I_{\cP'_2},x_b,x_c)+\sum_{i=1}^s(x_{a_i})$.
		\item $(I_\cP,x_a,x_b,x_c):x_d=(I_{\cP_3},x_a,x_b,x_c)+\sum_{i=1}^s(x_{a_i})+\sum_{i=1}^r(x_{c_i})$.
	\end{enumerate}
	In particular, the second equality above holds since $(I_{\cP'_2},x_b,x_c)$ is a prime ideal by Lemma~\ref{P'_2}. By the same lemma we have also $S_{\cP_2'} /(I_{\cP'_2},x_b,x_c)\cong K[\cP_2]$, from which claim (2) derives. For the sake of completeness we provide its proof.\\
	Observe that $I_{\cP}$ can be written in the following way:
	\begin{align*}
		&I_{\cP}=I_{\cP'_2}+\sum_{i=1}^{s}(x_ax_{a_i}-x_cx_{d_i})+\sum_{i=1}^{s}(x_dx_{a_i}-x_bx_{d_i})+\sum_{k,l\in[s]\atop k<l}(x_{d_k}x_{a_l}-x_{d_l}x_{a_k})+\\
		&+(\{x_{a_{s}}x_{v}-x_{a_{s-1}}x_{u}\vert [v, a_s]\in\cI(\cP), u=v+(0,1)\}),
	\end{align*}
	It follows:
	\begin{align*}
		&(I_{\cP},x_b,x_c)=(I_{\cP'_2},x_b,x_c)+\sum_{i=1}^{s}(x_ax_{a_i})+\sum_{i=1}^{s}(x_dx_{a_i})+\sum_{k,l\in[s]\atop k<l}(x_{d_k}x_{a_l}-x_{d_l}x_{a_k})\\
		&+(\{x_{a_{s}}x_{v}-x_{a_{s-1}}x_{u}\vert [v, a_s]\in\cI(\cP), u=v+(0,1)\}),
	\end{align*}
	We prove that $(I_{\cP},x_b,x_c):x_a=(I_{\cP'_2},x_b,x_c)+\sum_{i=1}^{s}(x_{a_i})$. From the previous equality it follows that $(I_{\cP},x_b,x_c):x_a\supseteq (I_{\cP'_2},x_b,x_c)+\sum_{i=1}^{s}(x_{a_i})$. Let $f\in S$ such that $x_af\in (I_\cP,x_b,x_c)$. Then 
	\begin{align*}
		&x_af=g+\sum_{i=1}^{s}\gamma_i x_ax_{a_i}+\sum_{j=1}^{s}\delta_j x_dx_{a_j}+\sum_{k,l\in[s]\atop k<l}\nu_{kl}(x_{d_k}x_{a_l}-x_{d_l}x_{a_k})+\\
		&+\sum_{ [v,a_s]\in\cI(\cP)\atop u=v+(0,1)} \lambda_{v}(x_{a_s} x_{v}-x_{a_{s-1}}x_{u}),
	\end{align*}
	where $g\in (\cI_{\cP'_2},x_b,x_c)$, $\gamma_i,\delta_j,\nu_{k,l}\lambda_{v}\in S_\cP$ for all $i,k,j,l\in[s]$ and for all $v\in V(\cP)$ such that $[v,a_s]\in\cI(\cP)$. As a consequence:
	\begin{align*}
		&x_a\biggl(f-\sum_{i=1}^{s}\gamma_i x_{a_i}\biggr)=g+\sum_{j=1}^{s}(\delta_j x_{d})x_{a_j}+\sum_{k,l\in[s]\atop k<l}(\nu_{kl}x_{d_k})x_{a_l}-\sum_{k,l\in[s]\atop k<l}(\nu_{kl}x_{d_l})x_{a_k}+\\
		&+\biggl(\sum_{ [v,a_s]\in\cI(\cP)\atop u=v+(0,1)} \lambda_{v}x_{v}\biggr)x_{a_s}-\biggl(\sum_{ [v,a_s]\in\cI(\cP)\atop u=v+(0,1)}\lambda_{v}x_{u}\biggr)x_{a_{s-1}}.
	\end{align*}
	Hence we obtain that $x_a\bigl(f-\sum_{i=1}^{s}\gamma_i x_{a_i}\bigr)\in (I_{\cP'_2},x_b,x_c)+\sum_{i=1}^{s}(x_{a_i})$. Since $(I_{\cP'_2},x_b,x_c)$ is prime and $a_i\notin V(\cP'_2)$ for all $i\in[s]$ then $(I_{\cP'_2},x_b,x_c)+\sum_{i=1}^{s}(x_{a_i})$ is a prime ideal in $S_\cP$. By being $x_a\notin I_{\cP'_2}$, we have $f-\sum_{i=1}^{s}\gamma_i x_{a_i}\in (I_{\cP'_2},x_b,x_c)+\sum_{i=1}^{s}(x_{a_i})$, so $f\in (I_{\cP'_2},x_b,x_c)+\sum_{i=1}^{s}(x_{a_i})$, that is $(I_{\cP},x_b,x_c):x_a\subseteq(I_{\cP'_2},x_b,x_c)+\sum_{i=1}^{s}(x_{a_i})$. In conclusion we have $(I_{\cP},x_b,x_c):x_a=(I_{\cP'_2},x_b,x_c)+\sum_{i=1}^{s}(x_{a_i})$ and as a consequence
	$S_\cP/((I_{\cP},x_b,x_c):x_a)=S_\cP/(I_{\cP'_2},x_b,x_c)+(x_{a_1},\dots,x_{a_s}))\cong S_{\cP_2}/(I_{\cP'_2},x_b,x_c)\otimes_K K[x_v\mid v\in V(\cP\setminus \cP_1)]/(x_{a_1},\dots,x_{a_s}) \cong K[\cP_2]\otimes_K K[x_{d_1},\dots,x_{d_{s-2}}].$
\end{proof}

\noindent We omit to provide the analogous result for the case $V(\cL)\cap V(\mathcal{\cC})=\{a_{s-1},a_s,c_{r-1},c_r\}$. In fact, we can reduce it to the case examined in the previous Lemma up to a rotation and a reflection.

\begin{lemma}\label{Lemma: column+prodotti tensoriali - quarto caso}
	Let $\cP$ be an  $(\cL,\cC)$-polyomino such that $V(\cL)\cap V(\mathcal{\cC})=\{d_{s-1},d_s,c_{r-1},c_r\}$. Suppose that $I_{\cP}$ is prime. Then:
	\begin{enumerate}[(1)] 
		\item $S_\cP/((I_\cP,x_b):x_c)\cong K[\cP_1]\otimes_K K[x_{c_1},\dots,x_{c_{r-2}}]$;
		\item $S_\cP/((I_\cP,x_b,x_c):x_d)\cong K[\cP_2]\otimes_K K[x_{d_1},\dots,x_{d_{s-2}}]$;
		\item $S_\cP/((I_\cP,x_b,x_c,x_d):x_a)\cong K[\cP_3]\otimes_K K[x_d,x_{c_1},\dots,x_{c_{r-2}},x_{d_1},\dots,x_{d_{s-2}}]$.
	\end{enumerate} 
\end{lemma}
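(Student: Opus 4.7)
The plan is to follow the template established by Lemmas~\ref{Lemma: column+prodotti tensoriali - primo caso} and~\ref{Lemma: column+prodotti tensoriali - secondo caso}, adapting the bookkeeping to the current configuration. In case $(4)$, the collection $\cC$ attaches to the left side of the vertical arm (at $d_{s-1}, d_s$) and to the bottom of the horizontal arm (at $c_{r-1}, c_r$), so the binomials bridging $\cL$ and $\cC$ now involve the variables $x_{c_{r-1}}, x_{c_r}$ and $x_{d_{s-1}}, x_{d_s}$ rather than the variables $x_{b_j}, x_{a_i}$. The permutation of $\{a,b,c,d\}$ used to kill the corners of the cell $A$ in succession is $(b, c, d, a)$, chosen so that, at each step, the variable we colon by is the one that forces a prime ideal to appear in the quotient.

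For part $(1)$, I would decompose $I_\cP = I_{\cP_1} + J_{hor}$, where $J_{hor}$ collects the inner $2$-minors supported on the horizontal arm $[A, A_r]$ together with those that cross between $A$ and the cells $A_j$, and those in the interface inner intervals with corners $c_{r-1}, c_r$. Adding $x_b$ and taking the colon by $x_c$, the analogue of the computation done for Lemma~\ref{Lemma: column+prodotti tensoriali - primo caso}(1) should yield
\begin{equation*}
(I_\cP, x_b) : x_c \;=\; (I_{\cP_1}, x_b) + (x_d) + \sum_{i=1}^{r}(x_{b_i}).
\end{equation*}
The inclusion $\supseteq$ is immediate; for the reverse, I would write $x_c f$ as a combination of generators, subtract the obvious summands with $x_{b_i}$, and invoke the primality of $I_{\cP_1} + (x_b, x_d, x_{b_1}, \ldots, x_{b_r})$ in $S_\cP$, which follows from Proposition~\ref{Proposizione: K[P_i] è dominio, sapendo che I è primo} (since $K[\cP_1]$ is a domain and none of $b, d, b_i$ are vertices of $\cP_1$). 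The tensor product decomposition then follows from the standard fact recalled in the remark after Proposition~\ref{Hilber-tensoriale}.

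For part $(2)$, I would similarly show $(I_\cP, x_b, x_c) : x_d = (I_{\cP'_2}, x_b, x_c) + \sum_{i=1}^{s}(x_{a_i})$. Here the ideal $(I_{\cP'_2}, x_b, x_c)$ is prime by Lemma~\ref{P'_2}(1) (applied to the appropriate reflected configuration), and the same lemma identifies $S_{\cP'_2}/(I_{\cP'_2}, x_b, x_c) \cong K[\cP_2]$, giving the claimed tensor decomposition. For part $(3)$, the expected identity is
\begin{equation*}
(I_\cP, x_b, x_c, x_d) : x_a \;=\; (I_{\cP_3}, x_b, x_c, x_d) + \sum_{i=1}^{s}(x_{d_i}) + \sum_{j=1}^{r}(x_{b_j}),
\end{equation*}
after which the decomposition of $S_\cP$ as tensor product of $K[\cP_3]$ with the polynomial ring in the surviving variables (which includes $x_a$ and all $x_{c_i}, x_{d_i}$ not in $V(\cP_3)$) is automatic.

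The main obstacle is the careful reflection symmetry: the particular set of vertices appearing in the interface binomials and the particular variables that must be ``freed'' in each colon differ from the two preceding lemmas, so one has to be cautious not to mis-apply Lemma~\ref{P'_2}, which is formulated for a specific orientation. Once the correct reflection is fixed (sending the axis-aligned pair $(a,b)$ to $(d,c)$) all three colon computations become purely algebraic manipulations entirely parallel to those in Lemma~\ref{Lemma: column+prodotti tensoriali - primo caso} and Lemma~\ref{Lemma: column+prodotti tensoriali - secondo caso}, relying at each step on the primality granted by Proposition~\ref{Proposizione: K[P_i] è dominio, sapendo che I è primo}.
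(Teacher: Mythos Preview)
Your plan for part~(2) coincides with the paper's, but parts~(1) and~(3) contain genuine errors, not just cosmetic ones.

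For part~(1) you write $(I_\cP,x_b):x_c=(I_{\cP_1},x_b)+(x_d)+\sum_{i=1}^{r}(x_{b_i})$ and justify primality by claiming that ``none of $b,d,b_i$ are vertices of $\cP_1$''. This is false: $b$ is the lower right corner of $B_1\in\cP_1$, so $b\in V(\cP_1)$. More importantly, your proposed ideal is strictly smaller than the true colon. Since $x_c x_{a_i}-x_b x_{d_i}\in I_\cP$ and $x_a x_{a_i}-x_d x_{d_i}\in I_\cP$, one checks that $x_a x_{a_i}\in(I_\cP,x_b):x_c$ for every $i\in[s]$; but $a\notin V(\cP_1)$, so $x_a$ is free over $I_{\cP_1}$ and $x_a x_{a_i}$ does not belong to your ideal. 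The paper avoids this by working with $\cP'_1=\cP\setminus[A_1,A_r]$ (which still contains the cell $A$): it proves
\[
(I_\cP,x_b):x_c=(I_{\cP'_1},x_b,x_d)+\sum_{i=1}^{r}(x_{b_i}),
\]
uses the primality of $(I_{\cP'_1},x_b,x_d)$ supplied by Lemma~\ref{P'_2}(2), and then applies the isomorphism $S_{\cP'_1}/(I_{\cP'_1},x_b,x_d)\cong K[\cP_1]$ from that same lemma. In configuration~(4) both arms require this auxiliary step, so Lemma~\ref{P'_2} is invoked for $\cP'_1$ as well as for $\cP'_2$; the approach of Lemma~\ref{Lemma: column+prodotti tensoriali - primo caso}(1) does not transfer directly here.

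For part~(3) your displayed equality has $\sum_{i=1}^{s}(x_{d_i})$ where it must be $\sum_{i=1}^{s}(x_{a_i})$. This matters: in configuration~(4) the vertices $d_{s-1},d_s$ lie in $V(\cP_3)=V(\cC)$, so killing all $x_{d_i}$ would cut into $K[\cP_3]$ and the tensor decomposition would fail. The variables that the colon by $x_a$ actually produces are the $x_{a_i}$ (from $x_a x_{a_i}-x_d x_{d_i}$) and the $x_{b_j}$ (from $x_a x_{b_j}-x_c x_{c_j}$), none of which are vertices of $\cP_3$.
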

\begin{proof}
	The claims follow reasoning as in Lemma~\ref{Lemma: column+prodotti tensoriali - primo caso}, obtaining the equalities of the following ideals:
	\begin{enumerate}[(1)]
		\item $(I_\cP,x_b):x_c=(I_{\cP'_1},x_b,x_d)+\sum_{i=1}^r(x_{b_i})$
		\item $(I_\cP,x_b,x_c):x_d=(I_{\cP'_2},x_b,x_c)+\sum_{i=1}^s(x_{a_i})$.
		\item $(I_\cP,x_b,x_c,x_d):x_a=(I_{\cP_3},x_b,x_c,x_d)+\sum_{i=1}^s(x_{a_i})+\sum_{i=1}^r(x_{b_i})$.
	\end{enumerate}
	In particular, the first equality follows from  the primality of $(I_{\cP'_1},x_b,x_d)$, the second equality follows from the primality of $(I_{\cP'_2},x_b,x_c)$, both by Lemma~\ref{P'_2}.
\end{proof}

\begin{theorem}\label{Teorema: serie di Hilbert - primo caso}
	Let $\cP$ be an $(\cL,\cC)$-polyomino. Suppose that $I_{\cP}$ is prime. Then:
	$$\mathrm{HP}_{K[\cP]}(t)=\frac{1}{1-t}\rHP_{K[\cP_4]}(t)+\frac{t}{1-t}\bigg[\frac{\rHP_{K[\cP_1]}(t)}{(1-t)^{r-2}}+\frac{\rHP_{K[\cP_2]}(t)}{(1-t)^{s-2}}+\frac{\rHP_{K[\cP_3]}(t)}{(1-t)^{s+r-3}} \bigg]$$
\end{theorem}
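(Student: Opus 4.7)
The plan is to proceed case by case, following the four configurations of Definition~\ref{Definizione: (L,C)-polimino}, and in each case apply Proposition~\ref{Proposizione: la prima che serve per calcolare HP} iteratively to the four short exact sequences displayed just before Lemma~\ref{Lemma: column+prodotti tensoriali - primo caso}, with a permutation $(i_1,i_2,i_3,i_4)$ of $\{a,b,c,d\}$ adapted to the configuration. For configuration~(1), namely $V(\cL)\cap V(\cC)=\{a_{s-1},a_s,b_{r-1},b_r\}$, the adapted choice is $(i_1,i_2,i_3,i_4)=(a,d,c,b)$, so that the three colon quotients produced by the exact sequences are precisely the ones described in Lemma~\ref{Lemma: column+prodotti tensoriali - primo caso}. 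Since $I_\cP$ is a homogeneous prime ideal generated in degree two, no vertex variable lies in $I_\cP$; in particular $(I_\cP:x_a)=I_\cP$, and the first short exact sequence collapses to $(1-t)\,\rHP_{K[\cP]}(t)=\rHP_{S_\cP/(I_\cP,x_a)}(t)$.

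Substituting the remaining three short exact sequences into this equality, each time applying Proposition~\ref{Proposizione: la prima che serve per calcolare HP} with $m=1$, produces
$$(1-t)\,\rHP_{K[\cP]}(t)=\rHP_{S_\cP/(I_\cP,x_a,x_d,x_c,x_b)}(t)+t\sum_{j=2}^{4}\rHP_{S_\cP/((I_\cP,x_{i_1},\ldots,x_{i_{j-1}}):x_{i_j})}(t).$$
Lemma~\ref{isomorfismoP4} identifies the first summand on the right with $\rHP_{K[\cP_4]}(t)$. For the three colon summands, Lemma~\ref{Lemma: column+prodotti tensoriali - primo caso} provides isomorphisms with $K[\cP_1]\otimes_K K[x_{b_1},\ldots,x_{b_{r-2}}]$, $K[\cP_2]\otimes_K K[x_{a_1},\ldots,x_{a_{s-2}}]$, and $K[\cP_3]\otimes_K K[x_b,x_{a_1},\ldots,x_{a_{s-2}},x_{b_1},\ldots,x_{b_{r-2}}]$ respectively. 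Combining these with Proposition~\ref{Hilber-tensoriale} and the standard formula $\rHP_{K[x_1,\ldots,x_n]}(t)=1/(1-t)^n$, these three series evaluate to $\rHP_{K[\cP_1]}(t)/(1-t)^{r-2}$, $\rHP_{K[\cP_2]}(t)/(1-t)^{s-2}$, and $\rHP_{K[\cP_3]}(t)/(1-t)^{s+r-3}$. Dividing through by $1-t$ then yields the claimed identity.

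For configurations~(2) and~(4) the same scheme runs verbatim with permutations $(c,b,a,d)$ and $(b,c,d,a)$ respectively, invoking Lemma~\ref{Lemma: column+prodotti tensoriali - secondo caso} and Lemma~\ref{Lemma: column+prodotti tensoriali - quarto caso} in place of Lemma~\ref{Lemma: column+prodotti tensoriali - primo caso}; configuration~(3) reduces to~(2) via the rotation-plus-reflection noted after Lemma~\ref{Lemma: column+prodotti tensoriali - secondo caso}. The main (and essentially only) obstacle is the bookkeeping step: one has to verify in each of the four configurations that the preliminary lemma indeed attaches polynomial rings with exactly $r-2$, $s-2$, and $s+r-3$ extra variables to $K[\cP_1]$, $K[\cP_2]$, and $K[\cP_3]$ respectively, so that the resulting closed form is genuinely independent of the configuration. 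Once this matching is confirmed, no further algebraic input is required, as all the substantive content has already been absorbed into the preceding lemmas.
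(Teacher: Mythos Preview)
Your proof follows exactly the paper's approach: iterate Proposition~\ref{Proposizione: la prima che serve per calcolare HP} along the four displayed short exact sequences, use $I_\cP:x_{i_1}=I_\cP$ from primality, and then plug in Lemma~\ref{isomorfismoP4} together with Lemmas~\ref{Lemma: column+prodotti tensoriali - primo caso}, \ref{Lemma: column+prodotti tensoriali - secondo caso}, \ref{Lemma: column+prodotti tensoriali - quarto caso} and Proposition~\ref{Hilber-tensoriale}. There is one minor labeling slip: Lemma~\ref{Lemma: column+prodotti tensoriali - secondo caso} treats configuration~(3) of Definition~\ref{Definizione: (L,C)-polimino}, not configuration~(2), and accordingly it is configuration~(2) that reduces to~(3) by the rotation-plus-reflection, not the other way around; with that correction the argument is complete and matches the paper's.
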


\begin{proof}
	Assume that $V(\cL)\cap V(\mathcal{\cC})=\{a_{s-1},a_s,b_{r-1},b_r\}$. Consider the following four short exact sequences:
	
	\begin{footnotesize}
		$$0 \longrightarrow S_\cP/(I_{\cP}:x_a)  \longrightarrow S_\cP/I_{\cP}  \longrightarrow S_\cP/(I_{\cP},x_a) \longrightarrow0  $$
	$$0 \longrightarrow S_\cP/((I_{\cP},x_a):x_d)  \longrightarrow S_\cP/(I_{\cP},x_a) \longrightarrow S_\cP/(I_{\cP},x_a,x_d) \longrightarrow0  $$
	$$0 \longrightarrow S_\cP/((I_{\cP},x_a,x_d):x_c)  \longrightarrow S_\cP/(I_{\cP},x_a,x_d) \longrightarrow S_\cP/(I_{\cP},x_a,x_d,x_c) \longrightarrow0  $$
	$$0 \longrightarrow S_\cP/((I_{\cP},x_a,x_d,x_c):x_b)  \longrightarrow S_\cP/(I_{\cP},x_a,x_d,x_c) \longrightarrow S_\cP/(I_{\cP},x_a,x_d,x_c,x_b) \longrightarrow0  $$
	\end{footnotesize}

	Since $I_{\cP}:x_a=I_{\cP}$, because $I_{\cP}$ is prime, the claim easily follows by repeated applications of Proposition \ref{Proposizione: la prima che serve per calcolare HP} and from Proposition~\ref{Hilber-tensoriale}, Lemma~\ref{isomorfismoP4} and Lemma \ref{Lemma: column+prodotti tensoriali - primo caso}.\\
	If $V(\cL)\cap V(\mathcal{\cC})=\{d_{s-1},d_s,b_{r-1},b_r\}$ the formula is obtained referring to Lemma~\ref{Lemma: column+prodotti tensoriali - secondo caso} by a suitable permutation of the set $\{a,b,c,d\}$. For symmetry, we obtain the claim also for the case $V(\cL)\cap V(\mathcal{\cC})=\{a_{s-1},a_s,c_{r-1},c_r\}$.\\	
	Finally, if $V(\cL)\cap V(\mathcal{\cC})=\{d_{s-1},d_s,c_{r-1},c_r\}$ we use again the same argument together with Lemma~\ref{Lemma: column+prodotti tensoriali - quarto caso}.
\end{proof}

\begin{coro}
	Let $\cP$ be an $(\cL,\cC)$-polyomino, suppose that $I_{\cP}$ is a prime ideal and $\cC$ is a simple polyomino. Then:
	$$\mathrm{HP}_{K[\cP]}(t)=\frac{h_{K[\cP_4]}(t)+t\big[h_{K[\cP_1]}(t)+h_{K[\cP_2]}(t)+(1-t)h_{K[\cP_3]}(t)\big]}{(1-t)^{\vert V(\cP)\vert -\mathrm{rank}\,\cP}}$$
	In particular $K[\cP]$ has Krull dimension $\vert V(\cP)\vert -\mathrm{rank}\,\cP$. 
	\label{HilbertCsimple}
\end{coro}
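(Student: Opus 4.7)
The plan is to substitute, into the formula of Theorem~\ref{Teorema: serie di Hilbert - primo caso}, each $\mathrm{HP}_{K[\cP_i]}(t)=h_{K[\cP_i]}(t)/(1-t)^{\dim K[\cP_i]}$, reduce to a common denominator, and verify that this denominator equals $(1-t)^{|V(\cP)|-\mathrm{rank}\,\cP}$. The extra factor $(1-t)$ multiplying $h_{K[\cP_3]}(t)$ in the statement will appear naturally because the summand coming from $\cP_3$ carries one fewer power of $(1-t)$ in its denominator than the other three.

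First I would verify that each of $\cP_1, \cP_2, \cP_3, \cP_4$ is a simple polyomino. For $\cP_3=\cC$ this holds by hypothesis. For the remaining three, a case-by-case inspection of the four configurations in Definition~\ref{Definizione: (L,C)-polimino} shows that the cells removed from $\cL$ break the closed loop formed by $\cL$ wrapped around $\cC$, so that the unique hole of $\cP$ no longer survives. Consequently, the standard dimension formula for simple polyominoes applies and gives $\dim K[\cP_i]=|V(\cP_i)|-\mathrm{rank}\,\cP_i$ for every $i\in[4]$.

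Setting $d_\cC:=|V(\cC)|-\mathrm{rank}\,\cC$, a direct count of cells and vertices (using $|V(\cL)\cap V(\cC)|=4$) yields
\begin{align*}
\dim K[\cP_3] &= d_\cC, & \dim K[\cP_1] &= s+d_\cC, \\
\dim K[\cP_2] &= r+d_\cC, & \dim K[\cP_4] &= r+s-2+d_\cC,
\end{align*}
together with $|V(\cP)|-\mathrm{rank}\,\cP=r+s-1+d_\cC$. Substituting into Theorem~\ref{Teorema: serie di Hilbert - primo caso}, the reduced denominators of the $\cP_4$-, $\cP_1$- and $\cP_2$-summands all become $(1-t)^{r+s-1+d_\cC}$, whereas the $\cP_3$-summand has denominator $(1-t)^{r+s-2+d_\cC}$. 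Bringing everything to the common denominator $(1-t)^{r+s-1+d_\cC}$ is exactly what introduces the factor $(1-t)$ in front of $h_{K[\cP_3]}(t)$, giving the expression in the statement.

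For the Krull dimension claim it is enough to check that the numerator of the resulting fraction does not vanish at $t=1$: the values $h_{K[\cP_i]}(1)$ for $i\in\{1,2,4\}$ are positive integers (each equal to the multiplicity of the corresponding Cohen--Macaulay coordinate ring), while the $\cP_3$-contribution is killed by the factor $(1-t)$. The Hilbert--Serre theorem then identifies the exponent $|V(\cP)|-\mathrm{rank}\,\cP$ with $\dim K[\cP]$. The only mildly tedious step is the case-by-case verification that $\cP_1, \cP_2, \cP_4$ are simple, so as to apply the dimension formula; after that, the reduction is a routine algebraic manipulation.
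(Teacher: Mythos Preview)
Your proposal is correct and follows essentially the same route as the paper: both substitute $\mathrm{HP}_{K[\cP_i]}(t)=h_{K[\cP_i]}(t)/(1-t)^{|V(\cP_i)|-\mathrm{rank}\,\cP_i}$ into Theorem~\ref{Teorema: serie di Hilbert - primo caso}, compute the four dimensions (you parameterize via $d_\cC$, the paper via $n=|V(\cP)|$ and $p=\mathrm{rank}\,\cP$, but the numbers agree), clear denominators, and then observe that the numerator evaluated at $t=1$ equals $h_{K[\cP_4]}(1)+h_{K[\cP_1]}(1)+h_{K[\cP_2]}(1)>0$ by Cohen--Macaulayness of the simple pieces. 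The paper asserts the simplicity of $\cP_1,\cP_2,\cP_4$ without argument, so your brief justification is actually slightly more explicit than what is written there.
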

\begin{proof}
	Since $\cC$ is a simple polyomino, then $\cP_1$, $\cP_2$, $\cP_3$ and $\cP_4$ are simple polyominoes, so we have that $K[\cP_j]$ is a normal Cohen-Macaulay domain of dimension $\vert V(\cP_j)\vert -\mathrm{rank}\,\cP_j$ for $j\in\{1,2,3,4\}$ from \cite[Corollary 3.3]{def balanced} and \cite[Theorem 2.1]{Simple equivalent balanced}. We put $\vert V(\cP)\vert =n$ and $\mathrm{rank}\,\cP=p$. Observe that 
	\begin{itemize}
		\item $\vert V(\cP_1)\vert =n-2r$ and $\mathrm{rank}\,\cP_1=p-r-1$, so $\vert V(\cP_1)\vert -\mathrm{rank}\,\cP_1=n-p-r+1$;
		\item $\vert V(\cP_2)\vert =n-2s$ and $\mathrm{rank}\,\cP_2=p-s-1$, so $\vert V(\cP_2)\vert -\mathrm{rank}\,\cP_2=n-p-s+1$;
		\item $\vert V(\cP_3)\vert =n-2s-2r$ and $\mathrm{rank}\,\cP_3=p-r-s-1$, so $\vert V(\cP_3)\vert -\mathrm{rank}\,\cP_3=n-p-s-r+1$;
		\item $\vert V(\cP_4)\vert =n-4$ and $\mathrm{rank}\,\cP_4=p-3$, so $\vert V(\cP_4)\vert -\mathrm{rank}\,\cP_4=n-p-1$.
	\end{itemize}
	Then $n-p=\vert V(\cP_4)\vert -\mathrm{rank}\,\cP_4+1=\vert V(\cP_1)\vert -\mathrm{rank}\,\cP_1+(r-2)+1=\vert V(\cP_2)\vert -\mathrm{rank}\,\cP_2+(s-2)+1$ and $\vert V(\cP_3)\vert -\mathrm{rank}\,\cP_3+(s+r-3)+1=n-p-1$. Therefore the formula for $\mathrm{HP}_{K[\cP]}(t)$ in the statement follows from Theorem \ref{Teorema: serie di Hilbert - primo caso} after an easy computation. Finally, let $h(t)$ be the polynomial in the numerator of the formula. By \cite[Corollary 4.1.10]{Bruns_Herzog}, observe that $h(1)=h_{K[\cP_4]}(1)+h_{K[\cP_1]}(1)+h_{K[\cP_2]}(1)>0$, so $(1-t)$ does not divide $h(t)$, hence $K[\cP]$ has Krull dimension $\vert V(\cP)\vert -\mathrm{rank}\,\cP$.
\end{proof}

\section{Hilbert-Poincar\'e series of prime closed path polyominoes having no $L$-configuration} \label{Section: H-P series of prime closed path polyominoes having no L-configuration}

\noindent In this section we suppose that $\cP$ is a prime closed path polyomino having no $L$-configurations, so $\cP$ contains a ladder of at least three steps (\cite[Section 6]{Cisto_Navarra_closed_path}). Let $\mathcal{B}_1$, $\mathcal{B}_2$ and $\mathcal{B}_3$ be three maximal horizontal blocks of a ladder of $n$ steps in $\cP$, $n\geq 3$. Without loss of generality, we can assume  that there does not exist a maximal block $\mathcal{K}\neq\mathcal{B}_2,\mathcal{B}_3$ of $\cP$ such that $\{\mathcal{K},\mathcal{B}_1,\mathcal{B}_2\}$ is a ladder of three steps. Moreover, applying suitable reflections or rotations of $\cP$, we can suppose that the orientation of the ladder is right/up, as in Figure~\ref{ladder_vuota}.

\begin{figure}[h!]
	\centering
\includegraphics[scale=0.75]{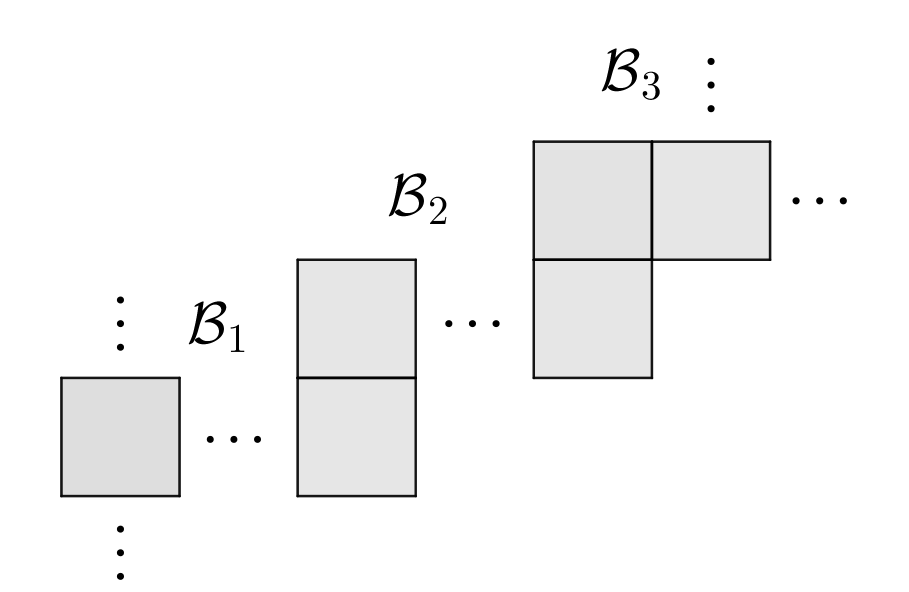}
\caption{}
\label{ladder_vuota}
\end{figure}

\noindent Our aim is to study the Hilbert-Poincar\'e series of the coordinate ring of $\cP$. We split our arguments in two cases. In the first case we suppose that at least one block between $\mathcal{B}_1$ or $\mathcal{B}_2$ contains exactly two cells, in the second one assume that $\mathcal{B}_1$ and $\mathcal{B}_2$ contain at least three cells.

\subsection{At least one block between $\mathcal{B}_1$ or $\mathcal{B}_2$ contains exactly two cells.}\label{ladder2}

We start with some preliminary definitions that we adopt throughout this subsection. Let $\mathcal{W}$ be a collection of cells consisting of an horizontal block $[A_s,A_1]$ of rank at least two, containing the cells $A_s,A_{s-1},\ldots,A_1$, a vertical block $[B_1,B_r]$ of rank at least two, containing the cells $B_1,B_2,\ldots,B_r$, and a cell $A$ not belonging to $[A_s,A_1]\cup[B_1,B_r]$, such that $V([A_s,A_1])\cap V([B_1,B_r])=\{b\}$, where $b$ is the lower right corner of $A$. Moreover we denote the left upper corner of $A$ by $a$, the lower right corner of $B_1$ by $d$, the lower right corner of $A_1$ by $c$. Moreover, let $b_i$ and $c_i$ be respectively the left upper and lower corners of $A_i$ for $i\in[s]$, let $a_j$ and $d_j$ be respectively the left and the right upper corners of $B_j$ for $j\in[r]$ (Figure~\ref{Figura:pentomino}).

\begin{figure}[h!]
	\centering
\includegraphics[scale=0.9]{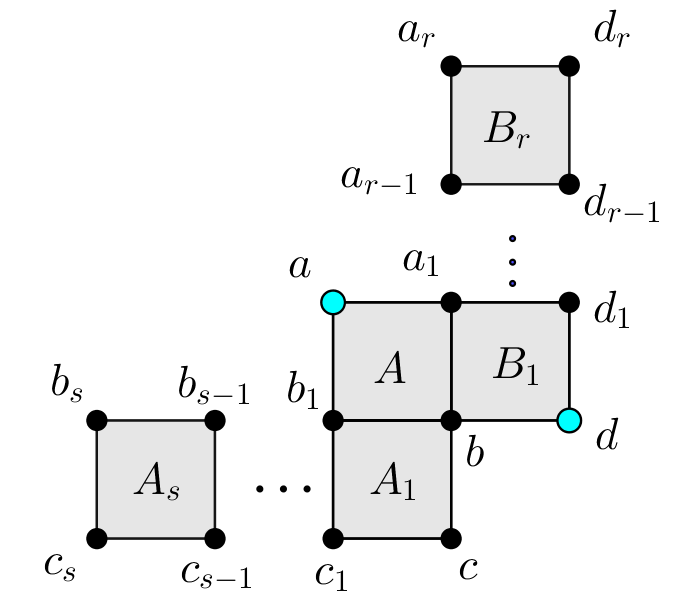}
\caption{A collection of cells $\mathcal{W}$}
\label{Figura:pentomino}
\end{figure}

\noindent Since $\cP$ has no $L$-configurations, it is trivial to check that $\cP$ contains a collection of cells $\mathcal{W}$ such that $[A_s,A_1]$ and $[B_1,B_r]$ are maximal blocks of $\cP$. In particular, if $\cM$ is the collection of cells such that $\cP=\cW\sqcup \cM$, then we call $\cW$:
		\begin{itemize}
		\item \textit{1-Configuration}, if $V(\cW)\cap V(\mathcal{\cM})=\{c_{s-1},c_s,d_{r-1},d_r\}$;
		\item \textit{2-Configuration}, if $V(\cW)\cap V(\mathcal{\cM})=\{b_{s-1},b_s,d_{r-1},d_r\}$.
		\end{itemize}
	\noindent Observe that just one of the following cases can occur:
		\begin{enumerate}[(1)]
		\item $\vert \mathcal{B}_1\vert =\vert \mathcal{B}_2\vert =2$. In such a case $s=2$ and $r=2$, $\mathcal{B}_1=[A_2,A_1]$ and $\mathcal{B}_2=[A,B_1]$, so we obtain an 1-Configuration.
		\item $\vert \mathcal{B}_1\vert > 2$ and $\vert \mathcal{B}_2\vert =2$. In such a case $s> 2$ and $r=2$, $\mathcal{B}_1=[A_s,A_1]$ and $\mathcal{B}_2=[A,B_1]$, so we have an 1-Configuration or a 2-Configuration depending on $\mathcal{M}\cap \{A_s\}$.
		\item  $\vert \mathcal{B}_1\vert = 2$ and $\vert \mathcal{B}_2\vert > 2$. In such a case, after a suitable rotation and reflection, consider a new ladder where $\mathcal{B}_1=[A_1,A]$ and $\mathcal{B}_2=[B_1,B_r]$, $s\geq 2$ and $r> 2$. Let $C$ be a cell of $\cP$ such that $I:=[C,A_1]$ is a maximal block of $\cP$. Therefore we obtain an 1-Configuration or a 2-Configuration depending on the position of the cell of $\cP\backslash I$ adjacent to $C$. 
		\end{enumerate}
		 The following related polyominoes will be essential in this subsection:
		\begin{itemize}
		\item $\mathcal{Q}=\cP \setminus\{A\}$;
		\item $\mathcal{Q}_1=\cP \setminus \{A,A_1,B_1\}$;
		\item $\cR_1=\cQ\setminus \{B_1\}$;
		\item $\cR_2=\cQ\setminus \{B_1,\ldots,B_s\}$;
		\item $\cF_1=\cQ\setminus \{A_1,\ldots,A_s\}$;
		\item $\cF_2=\cQ\setminus \{A_1,B_1,\ldots,B_s\}$.
		\end{itemize}		

\noindent Let $<^1$ be the total order on $V(\cP)$ defined as $u<^1 v$ if and only if, for $u = (i,j)$ and $v = (k,l)$, $i < k$, or $i = k$ and $j < l$. Let $Y\subset V(\cP)$ and let $<^Y_{\mathrm{lex}}$ be the lexicographic order in $S_\cP$ induced by the following order on the variables of $S_\cP$:
	\[ \mbox{for}\ u,v \in V(\cP)\qquad
	x_u<^Y_{\mathrm{lex}} x_v \Leftrightarrow
	\left\{
	\begin{array}{l}
	u\notin Y\ \mbox{and}\ v\in Y \\
	u,v\notin Y\ \mbox{and}\ u<^1 v \\
	u,v\in Y\ \mbox{and}\ u<^1 v
	\end{array}
	\right.
	\]
Considering Figure~\ref{Figura:pentomino}, from \cite[Theorem 4.9] {Cisto_Navarra_CM_closed_path} we know that there exists a set $L\subset V(\cP)$, with $a,d \in L$ and $b,c,a_1,b_1,c_1,d_1 \notin L$, such that the set of generators of $I_{\cP}$ forms the reduced Gr\"obner basis of $I_\cP$ with respect to $<^L_{\mathrm{lex}}$. Furthermore, in the case of 1-Configuration also $d_2,\ldots d_r\notin L$. For convenience we denote such a monomial order by $\prec_\cP$. Moreover, let $\prec_{\cQ}$, $\prec_{\cQ_1}$, $\prec_{\cR_1}$, $\prec_{\cR_2}$ be the monomial orders induced from $\prec_\cP$ respectively on the rings $S_Q$, $S_{\cQ_1}$, $S_{\cR_1}$, $S_{\cR_2}$. The following proposition will be useful. 
\begin{proposition}
Let $\cP$ be a closed path polyomino containing a collection of cells of type $\cW$. Then the set of the inner 2-minors of $\cQ$ is the reduced Gr\"obner basis of $I_\cQ$ with respect to the monomial order $\prec_{\cQ}$. The same holds for the polyominoes $\cQ_1$, $\cR_1$ and $\cR_2$ considering respectively the monomial orders $\prec_{\cQ_1}$, $\prec_{\cR_1}$ and $\prec_{\cR_2}$.
\label{orders}
\end{proposition}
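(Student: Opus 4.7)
The plan is to deduce each of the four claims from \cite[Theorem 4.9]{Cisto_Navarra_CM_closed_path} applied to the subpolyominoes rather than to $\cP$ itself. First, removing the cell $A$ from the closed path $\cP$ destroys its unique cycle, so $\cQ=\cP\setminus\{A\}$ is a simple thin polyomino; $\cQ_1$, $\cR_1$ and $\cR_2$ are obtained from $\cQ$ by deleting further extremal cells and are therefore also simple thin polyominoes. For each $\mathcal{X}\in\{\cQ,\cQ_1,\cR_1,\cR_2\}$ I would set $L_{\mathcal{X}}:=L\cap V(\mathcal{X})$, where $L\subset V(\cP)$ is the set produced by Theorem~4.9 for $\cP$; by construction the order $\prec_{\mathcal{X}}$ coincides with $<^{L_{\mathcal{X}}}_{\mathrm{lex}}$ on $S_{\mathcal{X}}$.

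The key observation is that every inner interval of $\mathcal{X}$ is an inner interval of $\cP$, since deleting cells cannot produce new inner intervals. Hence every generator of $I_{\mathcal{X}}$ is a generator of $I_{\cP}$, and its leading monomial with respect to $\prec_{\mathcal{X}}$ coincides with its leading monomial with respect to $\prec_{\cP}$. I would then apply Buchberger's criterion: given two inner 2-minors $f_1,f_2$ of $\mathcal{X}$, Theorem~4.9 for $\cP$ ensures that $S(f_1,f_2)$ reduces to zero modulo the inner 2-minors of $\cP$; the remaining step is to verify that only inner 2-minors of $\mathcal{X}$ are needed along the reduction. Since $S(f_1,f_2)\in S_{\mathcal{X}}$, this amounts to showing that no reducer with leading monomial involving a variable in $V(\cP)\setminus V(\mathcal{X})$ is ever invoked; the membership constraints on $L$ recorded in the excerpt ($a,d\in L$, $b,c,a_1,b_1,c_1,d_1\notin L$, and $d_2,\ldots,d_r\notin L$ in the 1-Configuration case) control precisely the leading terms of the inner 2-minors attached to the removed cells, so the elimination step goes through for each $\mathcal{X}$.

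Reducedness of the basis then follows exactly as in Theorem~4.9 for $\cP$: the lex structure of $\prec_{\mathcal{X}}$ combined with the thinness of $\mathcal{X}$ implies that no leading monomial of one inner 2-minor divides a monomial of another. The main obstacle is the elimination verification in the second paragraph; I expect it to reduce to a short case analysis distinguishing the 1-Configuration and 2-Configuration of $\cW$, together with the four choices of $\mathcal{X}$, by tracking which cells of $\cP$ are deleted and which generators of $I_{\cP}$ involve their vertices. Once this combinatorial check is carried out, the proposition follows immediately from the Gr\"obner basis already known for $\cP$.
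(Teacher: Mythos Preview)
Your overall strategy---exploit the Gr\"obner basis already known for $I_{\cP}$ and show that Buchberger's criterion descends to each subcollection $\mathcal{X}$---matches the paper's. There are, however, two points worth correcting.

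First, a factual slip: $\cQ=\cP\setminus\{A\}$ is \emph{not} simple. Removing $A$ leaves the horizontal block $[A_s,A_1]$ and the vertical block $[B_1,B_r]$ meeting only at the single vertex $b$, so $\cQ$ is a weakly closed path with one hole (the paper records this in Remark~\ref{RemarkQ}). This error is harmless for the argument, since you never actually use simplicity of $\cQ$; but it does mean your opening sentence, proposing to apply \cite[Theorem~4.9]{Cisto_Navarra_CM_closed_path} \emph{to the subpolyominoes}, cannot be taken literally: that theorem is stated for closed paths, and none of $\cQ,\cQ_1,\cR_1,\cR_2$ is one.

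Second, the paper sidesteps the ``elimination verification'' you flag as the main obstacle. Rather than tracking a reduction sequence for $S(f,g)$ in $I_\cP$ and arguing that every reducer used happens to lie in $I_{\mathcal{X}}$ (which, as you note, would require a case analysis and also care about non-leading monomials, not just leading ones), the paper goes back to the \emph{combinatorial hypotheses} of the lemmas in \cite[Section~3]{Cisto_Navarra_CM_closed_path}. Those lemmas say: if certain local configurations of cells and $L$-labels occur around the inner intervals of $f$ and $g$, then $S(f,g)$ reduces to zero using the inner $2$-minors of the ambient collection. The point is that, for $f,g\in I_\cQ$, these configurations are read off from cells of $\cQ$ only---the sole exception is the pair $f=x_bx_{c_1}-x_cx_{b_1}$, $g=x_bx_{d_1}-x_dx_{a_1}$, whose leading terms are coprime, so $S(f,g)$ reduces to zero trivially. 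Hence the same lemmas apply directly to $\cQ$ (and likewise to $\cQ_1,\cR_1,\cR_2$), with no reduction-tracking needed. Your plan would also work after the case analysis, but the paper's route is shorter and avoids the delicate issue of controlling intermediate monomials during reduction.
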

\begin{proof}
 Let $f,g$ be two generators of $I_\cQ\subset I_\cP$. Since every $S$-polynomial $S(f,g)$ reduces to zero in $I_\cP$ then the conditions in lemmas in \cite[Section 3] {Cisto_Navarra_CM_closed_path} are satisfied for the collection of cells $\cP$. Apart from the occurrences $f=x_b x_{c_1}-x_c x_{b_1}$ and $g=x_b x_{d_1} -x_d x_{a_1}$, in which the leading terms of $f$ and $g$ have the greatest common divisor equal to 1, the other conditions of the mentioned lemmas do not involve the cell $A$. So the same conditions hold also for the collection of cells $Q$, hence $S(f,g)$ reduces to zero also in $I_\cQ$. By the same argument also the second claim in the statement holds.
  \end{proof}  

\begin{remark} \rm
		Observe that $\cQ_1$, $\cR_1$, $\cR_2$, $\cF_1$ and $\cF_2$ are simple polyominoes, so their related coordinate rings are normal Cohen-Macaulay domains whose Krull dimension is given by the difference between the number of vertices and the number of cells of the fixed polyomino (see \cite[Corollary 3.3]{def balanced} and \cite[Theorem 2.1]{Simple equivalent balanced}). The polyomino $\cQ$ is not simple but it is a weakly closed path and it is easy to see that $\cQ$ has a weak ladder in the cases which we are studying. Therefore $I_\cQ$ is a prime ideal (equivalently $K[\cQ]$ is a domain) from \cite[Proposition 4.5]{Cisto_Navarra_weakly}. Moreover, from Proposition~\ref{orders} and arguing as in the proof of \cite[Theorem 4.10]{Cisto_Navarra_CM_closed_path} we also obtain that $K[\cQ]$ is a normal Cohen-Macaulay domain. 
\label{RemarkQ}
\end{remark}

\noindent We are going to use all these introductory facts in the proofs of the next results. With abuse of notation we refer to $\mathrm{in}(I_\cP)$, $\mathrm{in}(I_\cQ)$, $\mathrm{in}(I_{\cQ_1})$, $\mathrm{in}(I_{\cR_1})$, $\mathrm{in}(I_{\cR_2})$ respectively for the initial ideals of $I_\cP$ with respect to $\prec_\cP$, of $I_\cQ$ with respect to $\prec_\cQ$, of $I_{\cQ_1}$ with respect to $\prec_{\cQ_1}$, of $I_{\cR_1}$ with respect to $\prec_{\cR_1}$ and of $I_{\cR_2}$ with respect to $\prec_{\cR_2}$.

\begin{proposition}
Let $\cP$ be a closed path polyomino containing a collection of cells of type $\cW$. Then 
$$\mathrm{HP}_{K[\cP]}(t)=\mathrm{HP}_{K[\cQ]}(t)+\frac{t}{1-t}\mathrm{HP}_{K[\cQ_1]}(t)$$ 
\label{HilbertQ}
\end{proposition}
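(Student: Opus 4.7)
The plan is to compute $\mathrm{HP}_{K[\cP]}(t)$ via the initial ideal of $I_\cP$ with respect to $\prec_\cP$, for which \cite[Theorem~4.9]{Cisto_Navarra_CM_closed_path} provides the reduced Gröbner basis consisting of all inner $2$-minors of $\cP$. The key identity I would establish is
$$\mathrm{in}(I_\cP)=\mathrm{in}(I_\cQ)+(x_a x_b,\ x_a x_c,\ x_a x_d),$$
from which the Hilbert series decomposition is extracted by enumerating standard monomials.

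To verify this description, I first isolate the $2$-minors containing $x_a$. Since $a$ is a vertex of no cell of $\cP$ other than $A$, the only inner intervals of $\cP$ with $a$ as a corner are the cell $A$ itself, the vertical domino $[A_1,A]$, and the horizontal domino $[A,B_1]$, producing the binomials $x_{b_1}x_{a_1}-x_a x_b$, $x_{c_1}x_{a_1}-x_a x_c$ and $x_{b_1}x_{d_1}-x_a x_d$. Using that $a,d\in L$ while $b,c,a_1,b_1,c_1,d_1\notin L$, and the $<^1$-comparison $a<^1 d$, a direct check shows the three leading monomials are $x_ax_b$, $x_ax_c$ and $x_ax_d$. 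Every remaining inner interval of $\cP$ cannot contain the cell $A$ (otherwise $a$ would necessarily be one of its corners), and therefore coincides with an inner interval of $\cQ$; by Proposition~\ref{orders} the corresponding leading monomials generate $\mathrm{in}(I_\cQ)$, and the identity follows.

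Next I would enumerate the standard monomials of $S_\cP/\mathrm{in}(I_\cP)$ by writing each one uniquely as $x_a^n m$ with $m\in S_\cQ$. Non-inclusion in $\mathrm{in}(I_\cP)$ splits into two disjoint cases: for $n=0$ one only needs $m\notin\mathrm{in}(I_\cQ)$, producing a $K$-basis of $K[\cQ]$; for $n\geq 1$ one additionally needs $m$ to be divisible by none of $x_b,x_c,x_d$ (to avoid the three new generators), hence $m\in S_{\cQ_1}$, together with $m\notin\mathrm{in}(I_\cQ)\cap S_{\cQ_1}$. The main technical obstacle is then proving $\mathrm{in}(I_\cQ)\cap S_{\cQ_1}=\mathrm{in}(I_{\cQ_1})$. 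For this I classify the inner intervals of $\cQ$: those disjoint from $\{A_1,B_1\}$ are inner intervals of $\cQ_1$ and, by Proposition~\ref{orders}, their leading monomials generate $\mathrm{in}(I_{\cQ_1})$; the remaining ones are forced by the $\cW$-configuration at $b$ to be exactly the intervals $[c_i,b]$ for $1\leq i\leq s$ and $[b,d_j]$ for $1\leq j\leq r$ (no inner rectangle of $\cQ$ can contain both $A_1$ and $B_1$, since the cell immediately southeast of $b$ is absent and $A\notin\cQ$), and computation with $\prec_\cQ$ shows their leading monomials to be $x_{c_i}x_b$ and $x_{a_j}x_d$, each containing one of $x_b,x_c,x_d$ and therefore disappearing in $S_{\cQ_1}$.

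Combining the two groups of standard monomials, their contributions to $\mathrm{HP}_{S_\cP/\mathrm{in}(I_\cP)}(t)$ are $\mathrm{HP}_{K[\cQ]}(t)$ and $\sum_{n\geq 1}t^n\,\mathrm{HP}_{K[\cQ_1]}(t)=\tfrac{t}{1-t}\mathrm{HP}_{K[\cQ_1]}(t)$ respectively. Since passing to the initial ideal preserves the Hilbert-Poincaré series, summing yields the desired identity.
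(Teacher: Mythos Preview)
Your proof is correct and follows essentially the same route as the paper: both pass to the initial ideal via the quadratic Gr\"obner basis, establish $\mathrm{in}(I_\cP)=\mathrm{in}(I_\cQ)+(x_ax_b,x_ax_c,x_ax_d)$, and decompose along the variable $x_a$---the paper packages this as the short exact sequence for the colon ideal $\mathrm{in}(I_\cP):x_a=(\mathrm{in}(I_{\cQ_1}),x_b,x_c,x_d)$, while you phrase the same computation as a direct enumeration of standard monomials and the equivalent identity $\mathrm{in}(I_\cQ)\cap S_{\cQ_1}=\mathrm{in}(I_{\cQ_1})$. One cosmetic remark: your specific identification of the leading terms of $x_bx_{c_i}-x_cx_{b_i}$ and $x_bx_{d_j}-x_dx_{a_j}$ is not fully justified for all $i,j$ (the paper itself only writes $\max_{\prec_\cP}\{\cdots\}$), but this is harmless since \emph{both} monomials in each binomial already contain one of $x_b,x_c,x_d$, which is all your argument needs.
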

\begin{proof}
Observe that:
\begin{align*}
&I_\cP= I_{\cQ}+ (x_{b_1}x_{a_1}-x_a x_b)+(x_{b_1} x_{d_1}-x_a x_d)+(x_{c_1} x_{a_1}-x_a x_c),\\
&I_\cP= I_{\cQ_1}+ (x_{b_1}x_{a_1}-x_a x_b)+(x_{b_1} x_{d_1}-x_a x_d)+(x_{c_1} x_{a_1}-x_a x_c)+\\
&+\sum_{i=1}^r (x_b x_{d_i}-x_d x_{a_i})+\sum_{i=1}^s (x_b x_{c_i}-x_c x_{b_i}).  
\end{align*}
From Proposition~\ref{orders} we obtain:
\begin{align*}
& \mathrm{in}(I_\cP)=\mathrm{in}(I_\cQ)+(x_a x_b)+ (x_a x_d) + (x_a x_c),\\
& \mathrm{in}(I_\cP)=\mathrm{in}(I_{\cQ_1})+(x_a x_b)+ (x_a x_d) + (x_a x_c) + (\{\max_{\prec_\cP}\{x_b x_{d_i},x_d x_{a_i}\}:i\in[r]\})+\\
&+ (\{\max_{\prec_\cP} \{x_b x_{c_i},x_c x_{b_i}:i\in [s]\}).
\end{align*}

From the above equalities it is not difficult to see that:
\begin{itemize}
\item $(\mathrm{in}(I_\cP),x_a)=(\mathrm{in}(I_\cQ),x_a)$, in particular $S_\cP/(\mathrm{in}(I_\cP),x_a)= S_\cP/(\mathrm{in}(I_\cQ),x_a)\cong S_\cQ/\mathrm{in}(I_\cQ)$.
\item $\mathrm{in}(I_\cP):x_a=(\mathrm{in}(I_{\cQ_1}),x_b,x_c,x_d)$ (see for instance \cite[Proposition 1.2.2]{monomial_ideals}), in particular $S_\cP/(\mathrm{in}(I_\cP):x_a)=S_\cP/(\mathrm{in}(I_{\cQ_1}),x_b,x_c,x_d)\cong S_{\cQ_1}/\mathrm{in}(I_{\cQ_1})\otimes_K K[x_a]$. 
\end{itemize}
Consider the following exact sequence:
$$0 \longrightarrow S_\cP/(\mathrm{in}(I_\cP):x_a) \longrightarrow S_\cP/\mathrm{in}(I_\cP) \longrightarrow S_\cP/(\mathrm{in}(I_\cP),x_a) \longrightarrow0  $$

\noindent Since for every graded ideal $I$ of a standard graded $K$-algebra $S$ and for every monomial order $<$ on $S$ it is verified that $S/I$ and $S/\mathrm{in}_<(I)$ have the same Hilbert function (see \cite[Corollary 6.1.5]{monomial_ideals}), then from the above computations and from Propositions~\ref{Proposizione: la prima che serve per calcolare HP} and \ref{Hilber-tensoriale} we obtain $\mathrm{HP}_{K[\cP]}(t)=\mathrm{HP}_{K[\cQ]}(t)+\frac{t}{1-t}\mathrm{HP}_{K[\cQ_1]}(t)$.
\end{proof}

\noindent We observed that $\cQ$ is not a simple polyomino. Our aim is to provide a formula for the Hilbert-Poincar\'e series of $K[\cP]$ involving the Hilbert-Poincar\'e series related to the coordinate rings of simple polyominoes. By the previous result, since $\cQ_1$ is a simple polyomino, we have to study the  Hilbert-Poincar\'e series of $K[\cQ]$.  We examine 1-Configuration and 2-Configuration separately.

\begin{theorem}
Let $\cP$ be a closed path polyomino containing a collection of cells of type $\cW$ with the occurrence of 1-Configuration. Then
$$\mathrm{HP}_{K[\cP]}(t)=\frac{h_{K[\cR_1]}(t)+t\big[h_{K[\cR_2]}(t)+h_{K[\cQ_1]}(t)\big]}{(1-t)^{\vert V(\cP)\vert -\mathrm{rank}\,\cP}}$$
In particular, the Krull dimension of $K[\cP]$  is $\vert V(\cP)\vert -\mathrm{rank}\,\cP$.
\label{Hilbert-1conf}
\end{theorem}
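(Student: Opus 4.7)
The plan is to combine Proposition~\ref{HilbertQ} with a direct computation of $\mathrm{HP}_{K[\cQ]}(t)$ performed on the initial ideal $\mathrm{in}(I_\cQ)$, which has the same Hilbert function as $K[\cQ]$ by Proposition~\ref{orders}. After applying Proposition~\ref{HilbertQ} it suffices to prove
\[
\mathrm{HP}_{K[\cQ]}(t)=\mathrm{HP}_{K[\cR_1]}(t)+\frac{t}{(1-t)^{r-1}}\mathrm{HP}_{K[\cR_2]}(t),
\]
which I will establish by applying Proposition~\ref{Proposizione: la prima che serve per calcolare HP} to the monomial ideal $\mathrm{in}(I_\cQ)$ with $q=x_d$.

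Since $d$ is a vertex of $B_1$ only, the inner $2$-minors of $\cQ$ involving $x_d$ are exactly the binomials $x_b x_{d_j}-x_{a_j}x_d$ coming from the vertical inner intervals $[b,d_j]$ for $j\in[r]$. Using $d\in L$, the fact that $b,a_j\notin L$, and the $1$-Configuration hypothesis $d_2,\ldots,d_r\notin L$, the initial monomial of each binomial is $x_d x_{a_j}$; the remaining generators of $\mathrm{in}(I_\cQ)$ correspond to inner intervals not containing $B_1$, and by Proposition~\ref{orders} they assemble to $\mathrm{in}(I_{\cR_1})$. Thus
\[
\mathrm{in}(I_\cQ)=\mathrm{in}(I_{\cR_1})+(x_d x_{a_1},\ldots,x_d x_{a_r}),
\]
from which $(\mathrm{in}(I_\cQ),x_d)=(\mathrm{in}(I_{\cR_1}),x_d)$ and, by the colon formula for monomial ideals, $(\mathrm{in}(I_\cQ):x_d)=\mathrm{in}(I_{\cR_1})+(x_{a_1},\ldots,x_{a_r})$. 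Since $S_\cQ=S_{\cR_1}[x_d]$, the first identity gives $S_\cQ/(\mathrm{in}(I_\cQ),x_d)\cong S_{\cR_1}/\mathrm{in}(I_{\cR_1})$, which contributes $\mathrm{HP}_{K[\cR_1]}(t)$. For the colon term my plan is to show that every $2$-minor of $\cR_1$ arising from an inner interval that meets $[B_2,B_r]$ has both monomials divisible by some $x_{a_j}$: under the $1$-Configuration no cell of $\cR_1$ lies to the upper-left of the vertical block, so any inner interval of $\cR_1$ containing a cell $B_k$ with $k\ge 2$ must have both of its corners on the column $x=1$ at height at least $2$, and these are exactly the $a$-vertices. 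Consequently, modding out by $(x_{a_1},\ldots,x_{a_r})$ kills precisely these generators and leaves $\mathrm{in}(I_{\cR_2})$. Combining this with $V(\cR_1)\setminus V(\cR_2)=\{a_1,\ldots,a_r,d_1,\ldots,d_{r-2}\}$ and Proposition~\ref{Hilber-tensoriale} yields
\[
S_\cQ/(\mathrm{in}(I_\cQ):x_d)\cong \bigl(S_{\cR_2}/\mathrm{in}(I_{\cR_2})\bigr)[x_d,x_{d_1},\ldots,x_{d_{r-2}}],
\]
whose Hilbert series is $\mathrm{HP}_{K[\cR_2]}(t)/(1-t)^{r-1}$. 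Plugging both terms into Proposition~\ref{Proposizione: la prima che serve per calcolare HP} establishes the formula for $\mathrm{HP}_{K[\cQ]}(t)$.

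Substituting back into Proposition~\ref{HilbertQ}, the polyominoes $\cR_1,\cR_2,\cQ_1$ are simple (each is obtained from the closed path $\cP$ by removing a contiguous arc of cells from its cycle), so by \cite[Corollary~3.3]{def balanced} and \cite[Theorem~2.1]{Simple equivalent balanced} their coordinate rings are Cohen-Macaulay of Krull dimensions $d$, $d-r+1$, $d-1$ respectively, where $d=|V(\cP)|-\mathrm{rank}\,\cP$. Writing each series in the standard $h/(1-t)^{\dim}$ form and reducing to the common denominator $(1-t)^d$ produces the numerator $h_{K[\cR_1]}(t)+t\bigl[h_{K[\cR_2]}(t)+h_{K[\cQ_1]}(t)\bigr]$ of the statement. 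Since $h_{K[\cR_1]}(1)+h_{K[\cR_2]}(1)+h_{K[\cQ_1]}(1)>0$ by Cohen-Macaulayness, $(1-t)$ does not divide this numerator, so the Krull dimension of $K[\cP]$ equals $d$. I expect the main difficulty to be the combinatorial step showing that, under the $1$-Configuration, every inner interval of $\cR_1$ intersecting $[B_2,B_r]$ forces an $a$-vertex among its corners; this is where the specific gluing pattern $\{c_{s-1},c_s,d_{r-1},d_r\}$ is essential, because otherwise the interval could extend into $\cM$ and acquire a corner outside the set of $a$-vertices.
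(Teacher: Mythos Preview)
Your approach is essentially the same as the paper's: both compute $\mathrm{HP}_{K[\cQ]}(t)$ by applying Proposition~\ref{Proposizione: la prima che serve per calcolare HP} to the initial ideal $\mathrm{in}(I_\cQ)$ with $q=x_d$, identify the two quotients with $S_{\cR_1}/\mathrm{in}(I_{\cR_1})$ and a polynomial extension of $S_{\cR_2}/\mathrm{in}(I_{\cR_2})$, and then combine with Proposition~\ref{HilbertQ} and the known Krull dimensions of the simple polyominoes $\cR_1,\cR_2,\cQ_1$.

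Two minor points are worth noting. First, your justification that the leading term of $x_bx_{d_j}-x_dx_{a_j}$ is $x_dx_{a_j}$ invokes ``$a_j\notin L$'', but the paper only asserts $a_1\notin L$; the conclusion is nonetheless correct because $d\in L$ and $b,d_j\notin L$ already force $x_d$ to be the largest variable appearing, regardless of whether $a_j\in L$. Second, your quotient $S_\cQ/(\mathrm{in}(I_\cQ):x_d)\cong (S_{\cR_2}/\mathrm{in}(I_{\cR_2}))[x_d,x_{d_1},\ldots,x_{d_{r-2}}]$ correctly retains the variable $x_d$ (which does not lie in the colon ideal), yielding the exponent $r-1$; the paper writes $K[x_{d_1},\ldots,x_{d_{r-2}}]$ and the exponent $r-2$, but then compensates by recording $|V(\cR_2)|=n-2r+1$ instead of $n-2r$, so the final formula agrees with yours. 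Your bookkeeping is the internally consistent one.
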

\begin{proof}
Observe that:
\begin{align*}
& I_\cQ= I_{\cR_1}+\sum_{i=1}^r (x_b x_{d_i}-x_d x_{a_i}),\\
& I_\cQ= I_{\cR_2}+\sum_{i=1}^r (x_b x_{d_i}-x_d x_{a_i})+ \sum_{k,l\in[r]\atop k<l}(x_{a_k}x_{d_l}-x_{a_l}x_{d_k})+ \\
&+ (\{x_{a_{r-1}}x_{v}-x_{a_r}x_{u}\vert [a_{r-1},v]\in\cI(\cQ), u=v-(0,1)\}).
\end{align*}
From Proposition~\ref{orders} we obtain:
\begin{align*}
& \mathrm{in}(I_\cQ)=\mathrm{in}(I_{\cR_1})+ \sum_{i=1}^r (x_d x_{a_i}), \\
& \mathrm{in}(I_\cQ)=\mathrm{in}(I_{\cR_2})+ \sum_{i=1}^r (x_d x_{a_i})+ (\{\max_{\prec_\cQ} \{x_{a_k}x_{d_l},x_{a_l}x_{d_k}\}\vert k,l\in[r], k<l\})+\\
& +(\{\max_{\prec_\cQ} \{x_{a_{r-1}}x_{v},x_{a_r}x_{u}\}\vert [a_{r-1},v]\in\cI(\cQ), u=v-(0,1)\}).
\end{align*}
From the above equalities is not difficult to see that:
\begin{itemize}
\item $(\mathrm{in}(I_\cQ),x_d)=(\mathrm{in}(I_{\cR_1}),x_d)$, in particular $S_\cQ/(\mathrm{in}(I_\cQ),x_d)= S_\cQ/(\mathrm{in}(I_{\cR_1}),x_d)\cong S_{\cR_1}/\mathrm{in}(I_{\cR_1})$.
\item $\mathrm{in}(I_\cQ):x_d=\mathrm{in}(I_{\cR_2})+\sum_{i=1}^r(x_{a_i})$, in particular $S_\cQ/(\mathrm{in}(I_\cQ):x_d)\cong S_{\cR_2}/\mathrm{in}(I_{\cR_2})\otimes_K K[x_{d_1},\ldots,x_{d_{r-2}}]$. 
\end{itemize}
So, arguing as in the proof of Proposition~\ref{HilbertQ}, we obtain $\mathrm{HP}_{K[\cQ]}(t)=\mathrm{HP}_{K[\cR_1]}(t)+t\cdot \frac{\mathrm{HP}_{K[\cR_2]}(t)}{(1-t)^{r-2}}$. Combining such an equality with the claim of Proposition~\ref{HilbertQ} we have: 
$$ \mathrm{HP}_{K[\cP]}(t)=\mathrm{HP}_{K[\cR_1]}(t)+t\cdot \left(\frac{\mathrm{HP}_{K[\cR_2]}(t)}{(1-t)^{r-2}}+\frac{\mathrm{HP}_{K[\cQ_1]}(t)}{1-t}\right)$$
 Set $\vert V(\cP)\vert =n$ and $\mathrm{rank}\,\cP=p$. Observe that 
	\begin{itemize}
		\item $\vert V(\cR_1)\vert =n-2$ and $\mathrm{rank}\,\cR_1=p-2$, so $\vert V(\cR_1)\vert -\mathrm{rank}\,\cR_1=n-p$ and this is the Krull dimension of $K[\cR_1]$ since $\cR_1$ is simple;
		\item $\vert V(\cR_2)\vert =n-2r+1$ and $\mathrm{rank}\,\cP_2=p-r-1$, so $\vert V(\cR_2)\vert -\mathrm{rank}\,\cR_2=n-p-r+2$ and this is the Krull dimension of $K[\cR_2]$;
		\item $\vert V(\cQ_1)\vert =n-4$ and $\mathrm{rank}\,\cQ_1=p-3$, so $\vert V(\cQ_1)\vert -\mathrm{rank}\,\cQ_1=n-p-1$ and this is the Krull dimension of $K[\cQ_1]$.
	\end{itemize}
Therefore, by easy computations, we obtain the formula for $\mathrm{HP}_{K[\cP]}(t)$ in the statement. Finally, because of the Cohen-Macaulay property of $K[\cR_1]$, $K[\cR_2]$ and $K[\cQ_1]$ and by \cite[Corollary 4.1.10]{Bruns_Herzog}, we have that $h_{K[\cR_1]}(1)+h_{K[\cR_2]}(1)+h_{K[\cQ_1]}(1)>0$, so $\dim K[\cP]=\vert V(\cP)\vert -\mathrm{rank}\,\cP$.
\end{proof}

\noindent Now we want to study the 2-Configuration. In such a case we do not need to use the initial ideals.

\begin{theorem}
Let $\cP$ be a closed path polyomino containing a collection of cells of type $\cW$ with the occurence of 2-Configuration. Then 
$$\mathrm{HP}_{K[\cP]}(t)=\frac{(1+t)h_{K[\cQ_1]}(t)+t\big[h_{K[\cF_1]}(t)+h_{K[\cF_2]}(t)\big]}{(1-t)^{\vert V(\cP)\vert -\mathrm{rank}\,\cP}}$$
In particular, the Krull dimension of $K[\cP]$  is $\vert V(\cP)\vert -\mathrm{rank}\,\cP$.
\label{Hilbert-2conf}
\end{theorem}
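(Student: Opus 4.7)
The plan parallels that of Theorem~\ref{Hilbert-1conf}: by Proposition~\ref{HilbertQ} it suffices to compute $\mathrm{HP}_{K[\cQ]}(t)$, and this is done by iterated application of Proposition~\ref{Proposizione: la prima che serve per calcolare HP}. The essential new feature is that, in the 2-Configuration, the colon ideals which appear can be identified \emph{directly} with ideals of the form $I_{\mathcal{X}}+(\text{extra variables})$ for $\mathcal{X}\in\{\cF_1,\cF_2,\cQ_1\}$, without having to pass to the initial ideals with respect to $\prec_\cQ$. This is what the sentence ``In such a case we do not need to use the initial ideals'' refers to, and it works because in a 2-Configuration the block $[A_s,A_1]$ meets $\cM$ only along its upper edge interval; a suitable corner vertex on the lower edge of $A_1$ is therefore ``free'' and can play the role of the splitting variables used throughout the lemmas of Section~\ref{Section:Poincare-Hilbert series of certain non-simple polyominoes}.

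Concretely, the plan is to choose a corner variable $x_v$ at $A_1$ (most naturally $v=c$) and form
\begin{equation*}
0 \longrightarrow S_\cQ/(I_\cQ : x_v) \longrightarrow S_\cQ/I_\cQ \longrightarrow S_\cQ/(I_\cQ, x_v) \longrightarrow 0,
\end{equation*}
and then repeat the construction inside $S_\cQ/(I_\cQ : x_v)$ with a second corner variable, so that the three quotients produced identify, up to polynomial-ring factors in the vanishing variables, with $K[\cF_1]$, $K[\cF_2]$ and $K[\cQ_1]$. Each ideal identification is obtained by writing $I_\cQ = I_{\mathcal{X}}+(\text{boundary binomials})$ and running the element chase used in Lemmas~\ref{Lemma: column+prodotti tensoriali - primo caso}, \ref{P'_2}, \ref{Lemma: column+prodotti tensoriali - secondo caso} and \ref{Lemma: column+prodotti tensoriali - quarto caso}: the inclusion $\supseteq$ is immediate, and the reverse uses the primality of the candidate right-hand side, which holds because each of $\cF_1,\cF_2,\cQ_1$ is simple and so $K[\mathcal{X}]$ is a Cohen-Macaulay domain by \cite[Corollary 3.3]{def balanced} and \cite[Theorem 2.1]{Simple equivalent balanced}. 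Propositions~\ref{Proposizione: la prima che serve per calcolare HP} and \ref{Hilber-tensoriale} then assemble the pieces into
\begin{equation*}
\mathrm{HP}_{K[\cQ]}(t) \;=\; \frac{h_{K[\cQ_1]}(t) + t\,\bigl[h_{K[\cF_1]}(t)+h_{K[\cF_2]}(t)\bigr]}{(1-t)^{|V(\cP)|-\mathrm{rank}\,\cP}}.
\end{equation*}

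Substituting into Proposition~\ref{HilbertQ} and using $\dim K[\cQ_1]=|V(\cP)|-\mathrm{rank}\,\cP-1$ (since $\cQ_1$ is simple), the summand $\tfrac{t}{1-t}\mathrm{HP}_{K[\cQ_1]}(t)$ contributes an extra $t\,h_{K[\cQ_1]}(t)$ to the numerator, producing the $(1+t)\,h_{K[\cQ_1]}(t)$ term in the stated formula. The Krull dimension claim then follows by checking that the numerator does not vanish at $t=1$: Cohen-Macaulayness of $K[\cF_1]$, $K[\cF_2]$, $K[\cQ_1]$ together with \cite[Corollary 4.1.10]{Bruns_Herzog} yields $h_{K[\cF_1]}(1), h_{K[\cF_2]}(1), h_{K[\cQ_1]}(1)>0$, so $(1-t)$ does not divide the numerator and $\dim K[\cP]=|V(\cP)|-\mathrm{rank}\,\cP$.

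The hardest step will be the colon-ideal identifications, and in particular the choice of the order in which to split by corner variables so that at every stage the candidate ideal really is prime. The inclusion $\supseteq$ is automatic; the reverse requires, given $f\in S_\cQ$ with $x_v f$ in the previous ideal, writing $f$ as a combination of generators, isolating the terms whose leading monomial contains $x_v$, and cancelling $x_v$ by primality. The argument is structurally identical to the one in Lemmas~\ref{Lemma: column+prodotti tensoriali - primo caso}--\ref{Lemma: column+prodotti tensoriali - quarto caso}, but the book-keeping of which $a_i,b_i,c_i,d_j$ vertices must be adjoined to $I_{\mathcal{X}}$ to obtain the correct colon depends sensitively on the 2-Configuration hypothesis $V(\cW)\cap V(\cM)=\{b_{s-1},b_s,d_{r-1},d_r\}$.
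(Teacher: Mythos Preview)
Your plan is essentially the paper's own approach: compute $\mathrm{HP}_{K[\cQ]}(t)$ by colon-ideal reductions in the style of Lemma~\ref{Lemma: column+prodotti tensoriali - primo caso}, then feed the result into Proposition~\ref{HilbertQ}. Your intermediate formula for $\mathrm{HP}_{K[\cQ]}(t)$ and the final positivity check at $t=1$ are correct.

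Two slips in the mechanics should be fixed, however. First, the iteration must take place inside $S_\cQ/(I_\cQ,x_v)$, not $S_\cQ/(I_\cQ:x_v)$: since $I_\cQ$ is prime (Remark~\ref{RemarkQ}), one has $I_\cQ:x_v=I_\cQ$ for every $v$, so the colon side just reproduces $K[\cQ]$ and contributes only the overall factor $\tfrac{1}{1-t}$. Second, two splitting variables are not enough to produce three simple-polyomino pieces; the paper uses \emph{three} variables $x_c,x_b,x_d$ in that order, obtaining
\begin{align*}
(I_\cQ,x_c):x_b &= I_{\cF_1}+(x_c)+\textstyle\sum_{i=1}^s(x_{c_i}),\\
(I_\cQ,x_b,x_c):x_d &= I_{\cF_2}+(x_b,x_c)+\textstyle\sum_{i=1}^r(x_{a_i}),\\
(I_\cQ,x_b,x_c,x_d) &= (I_{\cQ_1},x_b,x_c,x_d),
\end{align*}
whence $S_\cQ/((I_\cQ,x_c):x_b)\cong K[\cF_1]\otimes_K K[x_{b_1},\ldots,x_{b_{s-2}}]$, $S_\cQ/((I_\cQ,x_b,x_c):x_d)\cong K[\cF_2]\otimes_K K[x_d,x_{d_1},\ldots,x_{d_{r-2}}]$, and $S_\cQ/(I_\cQ,x_b,x_c,x_d)\cong K[\cQ_1]$. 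With one trivial split plus only one genuine split you would recover only two of the three pieces $\cF_1,\cF_2,\cQ_1$ that your own formula for $\mathrm{HP}_{K[\cQ]}(t)$ requires.
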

\begin{proof}
Arguing as in Lemma~\ref{Lemma: column+prodotti tensoriali - primo caso} we obtain the following equalities:
\begin{enumerate}[(1)]
\item $I_\cQ : x_c=I_\cQ$;
\item $(I_\cQ,x_c):x_b= I_{\cF_1}+(x_c)+\sum_{i=1}^s(x_{c_i})$;
\item $(I_\cQ,x_b,x_c):x_d= I_{\cF_2}+(x_b,x_c)+\sum_{i=1}^r(x_{a_i})$.
\item $(I_\cQ,x_b,x_c,x_d)=(I_{\cQ_1},x_b,x_c,x_d)$
\end{enumerate}
Again by the same arguments of Lemma~\ref{Lemma: column+prodotti tensoriali - primo caso} we obtain also the following:
\begin{enumerate}[(1)]
\item $S_\cQ/(I_\cQ : x_c)=K[\cQ]$;
\item $S_\cQ/((I_\cQ,x_c):x_b)\cong K[\cF_1]\otimes_K K[x_{b_1},\ldots,x_{b_{s-2}}]$;
\item $S_\cQ/((I_\cQ,x_b,x_c):x_d)\cong K[\cF_2]\otimes_K K[x_d,x_{d_1},\ldots,x_{d_{r-2}}]$;
\item $S_\cQ/(I_\cQ,x_b,x_c,x_d)\cong K[\cQ_1]$
\end{enumerate}
Now considering the suitable exact sequences and arguing as in Theorem~\ref{Teorema: serie di Hilbert - primo caso}, the following holds:
$$\mathrm{HP}_{K[\cQ]}(t)=\frac{1}{1-t}\mathrm{HP}_{K[\cQ_1]}+\frac{t}{1-t}\left[\frac{\mathrm{HP}_{K[\cF_1]}(t)}{(1-t)^{s-2}}+\frac{\mathrm{HP}_{K[\cF_2]}(t)}{(1-t)^{r-1}}\right]$$
So, from Theorem~\ref{HilbertQ} we have:
$$\mathrm{HP}_{K[\cP]}(t)=\frac{1+t}{1-t}\mathrm{HP}_{K[\cQ_1]}+\frac{t}{1-t}\left[\frac{\mathrm{HP}_{K[\cF_1]}(t)}{(1-t)^{s-2}}+\frac{\mathrm{HP}_{K[\cF_2]}(t)}{(1-t)^{r-1}}\right]$$
Finally we obtain our claims arguing as in the last part of the previous result (or also, for instance, as in Corollary~\ref{HilbertCsimple}).
\end{proof}

\subsection{$\cB_1$ and $\cB_2$ contain at least three cells}\label{ladder3}

Suppose that $\cB_1=[B_1,B]$ consists of the cells $B_1,\ldots, B_r,B$, $r\geq 2$, and $\cB_2=[A,A_r]$ of the cells $A,A_1,\ldots,A_s$, $s\geq2$. We denote  the upper and lower left corners of $A$ by $a,c$ respectively, the upper and lower right corners of $A$ by $b,d$ respectively, the left and right lower corners of $B$ by $f,g$ respectively, the upper and lower right corners of $A_i$ by $a_i,b_i$ respectively for $i\in[s]$, the lower and upper left corners of $B_i$ by $c_i,d_i$ respectively for $i\in [r]$.
Considering our assumption on the ladder at the beginning of Section \ref{Section: H-P series of prime closed path polyominoes having no L-configuration} and the fact that $\cP$ has not any $L$-configuration, we have that $c_1,c_2\notin V(\cP)\setminus V(\cB_1)$. The described arrangement is summarized in Figure \ref{Figura:Esempio ladder con B1 e B2 di lunghezza tre}. \\
For our purpose we need to introduce the following related polyominoes:
\begin{itemize}
	\item $\cK_1=\cP\backslash [B_1,B]$;
	\item $\cK_2=\cP\backslash ([A,A_s]\cup \{B,B_r\})$;
	\item $\cK_3=\cP\backslash ([B_1,B]\cup \{A\})$;
	\item $\cK_4=\cP\backslash \{A,B,A_1,B_r\}$.
\end{itemize}

 \begin{figure}[h!]
 	\centering
 \includegraphics[scale=0.9]{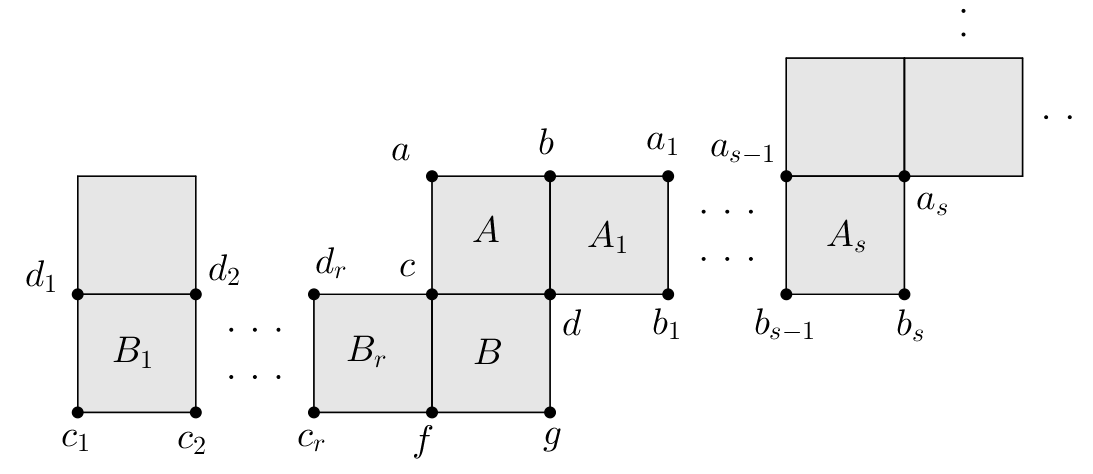}
 \caption{}
 \label{Figura:Esempio ladder con B1 e B2 di lunghezza tre}
 \end{figure}
 
\begin{lemma}\label{Lemma: colum + prodotti tensoriali - caso ladder B1 e B2 tre celle}
 Let $\cP$ be a closed path polyomino having a ladder of at least three steps satisfying the previous assumptions. Then the following hold:
 \begin{enumerate}[(1)]
 	\item $S_{\cP}/(I_{\cP}:x_g)\cong K[\cP]$;
 	\item $S_{\cP}/((I_{\cP},x_g):x_d)\cong K[\cK_1]\otimes_K K[x_{d_3},\dots,x_{d_r}]$;
 	\item $S_{\cP}/((I_{\cP},x_g,x_d):x_b)\cong K[\cK_2]\otimes_K K[x_{a},x_{b},x_{a_1},\dots,x_{a_{s-2}}]$;
 	\item $S_{\cP}/((I_{\cP},x_g,x_d,x_b):x_f)\cong S_{\cP}/(I_{\cP},x_g,x_d,x_b)$;
 	\item $S_{\cP}/((I_{\cP},x_g,x_d,x_b,x_f):x_c)\cong K[\cK_3]\otimes_K K[x_{d_3},\dots,x_{d_r}]$;
 	\item $S_{\cP}/((I_{\cP},x_g,x_d,x_b,x_f,x_c):x_a)\cong K[\cK_1]\otimes_K K[x_{a},x_{a_1},\dots,x_{a_{s-2}}]$;
 	\item $S_{\cP}/(I_{\cP},x_g,x_d,x_b,x_f,x_c,x_a)\cong K[\cK_4]$.
 \end{enumerate}  
\end{lemma}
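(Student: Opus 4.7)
The overall strategy is to treat the seven claims sequentially, mirroring the argument in Lemma \ref{Lemma: column+prodotti tensoriali - primo caso} and its variants (Lemmas \ref{Lemma: column+prodotti tensoriali - secondo caso} and \ref{Lemma: column+prodotti tensoriali - quarto caso}) from Section \ref{Section:Poincare-Hilbert series of certain non-simple polyominoes}. For each part, I would first establish an explicit equality of ideals in $S_\cP$, and then identify the quotient ring as a tensor product by isolating the ``free'' variables, namely those that, after successive substitutions, no longer appear in any surviving binomial generator.

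Parts (1) and (7) are the simplest. Part (1) is immediate: $\cP$ is prime by the standing hypothesis of Section \ref{Section: H-P series of prime closed path polyominoes having no L-configuration}, so $I_\cP$ is a prime ideal and $x_g\notin I_\cP$, hence $I_\cP:x_g=I_\cP$. Part (7) follows by direct inspection of the generators of $I_\cP$: after setting $x_a,x_b,x_c,x_d,x_f,x_g$ to zero, each binomial of $I_\cP$ whose support meets $V(A)\cup V(B)\cup V(A_1)\cup V(B_r)$ either vanishes or lies in $(x_a,x_b,x_c,x_d,x_f,x_g)$, and what remains is precisely $I_{\cK_4}$. For parts (2), (3), (5), (6), I follow the three-step technique of Lemma \ref{Lemma: column+prodotti tensoriali - primo caso}: decompose $I_\cP=I_{\cK_j}+\Gamma$, where $\Gamma$ collects the binomial generators whose support touches the removed cells; after quotienting by the already-substituted variables most binomials in $\Gamma$ reduce to monomials divisible by the next variable to be colon-ed against, forcing explicit monomial generators $x_u$ into the colon ideal; the reverse inclusion then comes from primality of the candidate right-hand ideal (which in turn uses $K[\cK_j]$ being a domain, either via simplicity of $\cK_j$ or via \cite[Proposition 4.5]{Cisto_Navarra_weakly} in the weakly closed path case) combined with the observation that the variable we colon against does not belong to that ideal. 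The tensor product presentations then follow from the Remark after Proposition \ref{Hilber-tensoriale}, because the leftover variables $x_{d_3},\dots,x_{d_r}$, $x_{a},x_{b},x_{a_1},\dots,x_{a_{s-2}}$ and their analogues no longer appear in any surviving relation.

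The main obstacle is part (4), namely the assertion that $x_f$ is a non-zero-divisor on $S_\cP/(I_\cP,x_g,x_d,x_b)$. The plan is to give a concrete presentation of $S_\cP/(I_\cP,x_g,x_d,x_b)$: after setting $x_g=x_d=x_b=0$, several binomials of $I_\cP$ collapse to monomial relations which force further top-edge vertex variables of $\cB_1$ and right-edge vertex variables of $\cB_2$ into the ideal, while the residual binomials coincide with $I_{\cK'}$ for a suitable simple subpolyomino $\cK'$ in which the cell $B$ remains intact; in particular $f\in V(\cK')$ and $x_f$ survives as an indeterminate inside the tensor factor $K[\cK']$. Since the quotient is thus the tensor product of the domain $K[\cK']$ with a polynomial ring in the free variables, $x_f$ is a non-zero-divisor, yielding $(I_\cP,x_g,x_d,x_b):x_f=(I_\cP,x_g,x_d,x_b)$. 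The unavoidable technical burden throughout is the bookkeeping of binomial generators after each successive substitution together with the verification that the candidate ideals at each stage are prime; the twofold symmetry between the legs $\cB_1$ and $\cB_2$ of the ladder keeps the work tractable but does not trivialize it.
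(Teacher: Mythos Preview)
Your plan for parts (1)--(3), (5)--(7) matches the paper's proof essentially step for step: the paper also reduces (2), (3), (6) to the argument of Lemma~\ref{Lemma: column+prodotti tensoriali - primo caso}, handles (5) via Lemma~\ref{Lemma: column+prodotti tensoriali - secondo caso} together with an auxiliary isomorphism $S_{\cK_1}/(I_{\cK_1},x_b,x_d)\cong K[\cK_3]$ in the spirit of Lemma~\ref{P'_2}, and gets (7) as in Lemma~\ref{isomorfismoP4}. So on those points there is nothing to add.

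The gap is in part (4). Your mechanism ``several binomials of $I_\cP$ collapse to monomial relations which force further \dots\ variables into the ideal'' does not occur here. The vertices $b,d,g$ lie on a single maximal vertical edge interval (the right edge of the column $\{A,B\}$), and every inner interval of $\cP$ having one corner on that line has the opposite corner on it as well: for instance $x_d x_{a_i}-x_b x_{b_i}$, $x_{c_i}x_d-x_{d_i}x_g$, $x_a x_g-x_b x_f$, $x_a x_d-x_b x_c$ all vanish identically after setting $x_b=x_d=x_g=0$. Thus no monomials are produced and no extra variables are forced in; what survives is the full set of inner $2$-minors of $\cP$ not meeting $\{b,d,g\}$, which still contains binomials such as $x_c x_{a_i}-x_a x_{b_i}$ (coming from $\cB_2$) and $x_{c_j}x_c-x_{d_j}x_f$ (coming from $\cB_1$), both involving the vertex $c$. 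The quotient therefore is not a tensor product of a coordinate ring of a simple subpolyomino with a polynomial ring, and there is no evident $\cK'$ ``in which the cell $B$ remains intact'' since two of $B$'s corners have been killed. That the primality of $(I_\cP,x_g,x_d,x_b)$ is genuinely delicate is confirmed by the remark following the lemma: when $\cB_2$ has only two cells the ideal is \emph{not} prime, so any correct argument must use $s\geq 2$ somewhere.

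The paper's proof of (4) takes a different route entirely: it exploits the explicit toric presentation $\phi\colon S_\cP\to K[\{v_i,h_j,w\}]$ of $I_\cP$ from \cite[Theorem~5.2]{Cisto_Navarra_closed_path} and defines $\psi$ by sending $x_b,x_d,x_g$ to $0$ and agreeing with $\phi$ elsewhere. Because $b,d,g$ span the entire maximal vertical edge interval $V_{i'}$, one checks directly that $\ker\psi=(I_\cP,x_b,x_d,x_g)$, whence this ideal is prime and $x_f$ is a non-zero-divisor modulo it. You should replace your argument for (4) with this toric map construction.
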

\begin{proof}
	To prove the isomorphisms in the statements $(1)-(7)$, it is enough to prove the following equalities:
	\begin{enumerate}[(1)]
	\item $I_{\cP}:x_g=I_{\cP}$;
	\item $(I_{\cP},x_g):x_d=I_{\cK_1}+(x_f,x_g)+\sum_{i=1}^{r}(x_{c_i})$;
	\item $(I_{\cP},x_g,x_d):x_b=I_{\cK_2}+(x_f,x_g,x_d,x_c)+\sum_{i=1}^{s}(x_{b_i})$;
	\item $(I_{\cP},x_g,x_d,x_b):x_f=(I_{\cP},x_g,x_d,x_b)$;
	\item $(I_{\cP},x_g,x_d,x_b,x_f):x_c=(I_{\cK_1},x_b,x_d)+(x_g,x_f)+\sum_{i=1}^{r}(x_{c_i})$;
	\item $(I_{\cP},x_g,x_d,x_b,x_f,x_c):x_a=I_{\cK_2}+(x_f,x_g,x_d,x_c,x_b)+\sum_{i=1}^{s}(x_{b_i})$;
	\item $(I_{\cP},x_g,x_d,x_b,x_f,x_c,x_a)=(I_{\cK_4},x_g,x_d,x_b,x_f,x_c,x_a)$. 
	\end{enumerate}
In particular the equality (1) is trivial since $I_{\cP}$ is prime, $(2)$, $(3)$ and $(6)$, together with the related claims, can be proved as done in Lemma \ref{Lemma: column+prodotti tensoriali - primo caso}. The equality $(5)$ and its related claim follow as in Lemma~\ref{Lemma: column+prodotti tensoriali - secondo caso}, considering also that $S_{\cK_1}/(I_{\cK_1},x_b,x_d)\cong K[\cK_3]$ and arguing as in Lemma~\ref{P'_2}. We obtain the equality $(7)$ and its related claim	as for Lemma~\ref{isomorfismoP4}.\\
Finally, in  order to show $4)$, we prove that $(I_{\cP},x_g,x_d,x_b)$ is a prime ideal in $S_{\cP}$. Let $\{V_i\}_{i\in I}$ be the set of the maximal edge intervals of $\cP$ and $\{H_j\}_{j\in J}$ be the set of the maximal horizontal edge intervals of $\cP$. Let $\{v_i\}_{i\in I}$ and $\{h_j\}_{j\in J}$ be the set of the variables associated respectively to $\{V_i\}_{i\in I}$ and $\{H_j\}_{j\in J}$. Let $w$ be another variable and set $\cI=\{f,c,d,g,b_1,\dots,b_s\}\subset V(\cP)$. We consider the following ring homomorphism $$\phi: S_{\cP} \longrightarrow  K[\{v_i,h_j,w\}:i\in I,j\in J]$$ defined by $\phi(x_{ij})=v_ih_jw^k$, where $(i,j)\in V_i\cap H_j$, $k=0$ if $(i,j)\notin \cI$, and $k=1$ if $(i,j)\in \cI$. From \cite[Theorem 5.2]{Cisto_Navarra_closed_path} we have $I_{\cP}=\ker \phi$. Let $i'\in I$ such that $V_{i'}$ is the maximal edge interval of $\cP$ containing $b$, $d$ and $g$. We define $\psi: S_{\cP}\rightarrow K[\{v_i,h_j,w\}:i\in I\backslash\{i'\},j\in J]$ as $\psi(x_v)=\phi(x_v)$ if $v\in V(\cP)\backslash\{b,d,g\}$, and $\psi(x_b)=\psi(x_d)=\psi(x_g)=0$. It is not difficult to check that $(I_{\cP},x_d,x_b,x_g)\subseteq \ker\psi$. Let $f\in \ker \psi$. We can write $f=\tilde{f}+\beta x_b+ \delta x_d + \gamma x_g$ where $\beta, \delta, \gamma \in S_{\cP}$ and $x_b,x_d,x_g$ are not variables of $\tilde{f}$. Since $\psi(f)=0$, we have $\phi(\tilde{f})=0$, so $\tilde{f}\in \ker\phi=I_{\cP}$. Hence $S_{\cP}/(I_{\cP},x_b,x_d,x_g)\cong \mathrm{Im}(\psi)$, that is a domain since it is the subring of a domain. So $(I_{\cP},x_b,x_d,x_g)$ is prime in $S_{\cP}$. 
\end{proof}

\begin{remark}\rm
 If we suppose that $\cB_2$ has just two cells (so $s=1$), then $(I_{\cP},x_g,x_d,x_b)$ is not prime. In fact, set $b=a_0$, denote the cell adjacent to $A_1$ by $C$, and let $b,p$ and $q,a_1$ be respectively the diagonal and anti-diagonal corners of $C$. Observe that in such a case $x_qx_{a_1}\in (I_{\cP},x_g,x_d,x_b)$ but $x_q,x_{a_1}\notin (I_{\cP},x_g,x_d,x_b)$.
\end{remark}

\begin{theorem}\label{Teorema: serie di Hilbert - ladder con B1 e B2 di tre}
Let $\cP$ be a closed path polyomino having a ladder of at least three steps satisfying the assumptions at the beginning of Subsection \ref{ladder3}. Then $$\mathrm{HP}_{K[\cP]}(t)=\frac{h_{K[\cK_4]}(t)+t\big[h_{K[\cK_1]}(t)+2\cdot h_{K[\cK_2]}(t)+h_{K[\cK_3]}(t)\big]}{(1-t)^{\vert V(\cP)\vert -\mathrm{rank}\,\cP}}$$
In particular $K[\cP]$ has Krull dimension $\vert V(\cP)\vert -\mathrm{rank}\,\cP$.
\label{Hilbert_series_ladder}
\end{theorem}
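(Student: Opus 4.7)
The strategy mimics the proof of Theorem~\ref{Teorema: serie di Hilbert - primo caso}: the plan is to build a chain of seven short exact sequences by successively adjoining the variables $x_g,x_d,x_b,x_f,x_c,x_a$ to $I_\cP$, evaluate Hilbert-Poincar\'e series along the chain via Proposition~\ref{Proposizione: la prima che serve per calcolare HP}, and feed in the seven identifications recorded in Lemma~\ref{Lemma: colum + prodotti tensoriali - caso ladder B1 e B2 tre celle}.

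Since $I_\cP$ is prime, part~(1) of the lemma gives $I_\cP:x_g=I_\cP$, so the first exact sequence collapses and yields a factor $1/(1-t)$ in the denominator of $\mathrm{HP}_{K[\cP]}(t)$. The same phenomenon recurs at step four: by part~(4), $x_f$ is a non-zero-divisor on $S_\cP/(I_\cP,x_g,x_d,x_b)$, giving a second factor $1/(1-t)$. The four remaining intermediate steps (the ones killing $x_d,x_b,x_c,x_a$) are controlled by parts~(2), (3), (5), (6), which identify each colon quotient with a tensor product $K[\cK_i]\otimes_K K[y_1,\dots,y_\ell]$ for some $\cK_i\in\{\cK_1,\cK_2,\cK_3\}$ and an explicit $\ell$ depending on $r$, $s$ and the position of the step; by Proposition~\ref{Hilber-tensoriale} each of these steps contributes a summand of the form $t\,\mathrm{HP}_{K[\cK_i]}(t)/(1-t)^{\ell+\epsilon}$, where $\epsilon\in\{1,2\}$ accounts for the two previously collapsed factors. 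The seventh step, together with part~(7), produces the summand $\mathrm{HP}_{K[\cK_4]}(t)/(1-t)^2$. Crucially, two of the four intermediate colon quotients (those associated with $x_b$ and with $x_a$) have the form $K[\cK_2]\otimes_K(\text{polynomial ring})$, which is precisely what produces the coefficient $2$ in front of $h_{K[\cK_2]}(t)$ in the stated numerator.

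To collapse this sum into a single fraction, the plan is to invoke the simplicity of $\cK_1,\cK_2,\cK_3,\cK_4$: each of them is obtained from the necklace $\cP$ by excising either a whole block or the four corner cells $\{A,B,A_1,B_r\}$, operations that open the unique hole of $\cP$. Hence, by \cite[Corollary 3.3]{def balanced} and \cite[Theorem 2.1]{Simple equivalent balanced}, each $K[\cK_i]$ is a Cohen-Macaulay normal domain of Krull dimension $|V(\cK_i)|-\mathrm{rank}\,\cK_i$. A combinatorial count of the vertices and cells removed to pass from $\cP$ to each $\cK_i$, analogous to the one performed in Corollary~\ref{HilbertCsimple}, then shows that all five summands share the common denominator $(1-t)^{|V(\cP)|-\mathrm{rank}\,\cP}$ and combine into the stated numerator.

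Finally, the Krull dimension assertion is obtained exactly as in the last lines of Corollary~\ref{HilbertCsimple}: Cohen-Macaulayness of each $K[\cK_i]$ forces $h_{K[\cK_i]}(1)>0$ by \cite[Corollary 4.1.10]{Bruns_Herzog}, so the numerator is strictly positive at $t=1$, $(1-t)$ does not divide it, and therefore $\dim K[\cP]=|V(\cP)|-\mathrm{rank}\,\cP$. The main obstacle I anticipate is the bookkeeping just mentioned: aligning the four different exponents $\ell+\epsilon$ (which depend on $r$, $s$ and on whether the step sits before or after the $x_f$-collapse) with the dimensions $|V(\cK_i)|-\mathrm{rank}\,\cK_i$ so that every summand consolidates over the single common denominator in the stated formula.
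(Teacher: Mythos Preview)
Your proposal is correct and follows exactly the route the paper takes: its proof is a one-line reference to Lemma~\ref{Lemma: colum + prodotti tensoriali - caso ladder B1 e B2 tre celle} combined with the arguments of Theorem~\ref{Teorema: serie di Hilbert - primo caso} and Corollary~\ref{HilbertCsimple}, which is precisely the chain of exact sequences, the tensor-product identifications, the dimension bookkeeping, and the positivity argument at $t=1$ that you outline. Two small remarks: there are six exact sequences (one per variable $x_g,x_d,x_b,x_f,x_c,x_a$), not seven; and you have correctly read item~(6) of the lemma as yielding $K[\cK_2]$---the displayed statement of~(6) in the paper has a typo (it prints $K[\cK_1]$, but the proof of the lemma uses $I_{\cK_2}$ and the coefficient $2$ in the theorem confirms $\cK_2$ is intended).
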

\begin{proof}
   It follows from Lemma \ref{Lemma: colum + prodotti tensoriali - caso ladder B1 e B2 tre celle} considering the suitable exact sequences and arguing as done in Theorem \ref{Teorema: serie di Hilbert - primo caso} and Corollary \ref{HilbertCsimple}. 
\end{proof}

\section{Rook polynomial and Gorenstein property}

\noindent Let $\cP$ be a polyomino. A \textit{$k$-rook configuration} in $\cP$ is a configuration of $k$ rooks which are arranged in $\cP$ in non-attacking positions. 
\begin{figure}[h]
	\centering
	\includegraphics[scale=0.8]{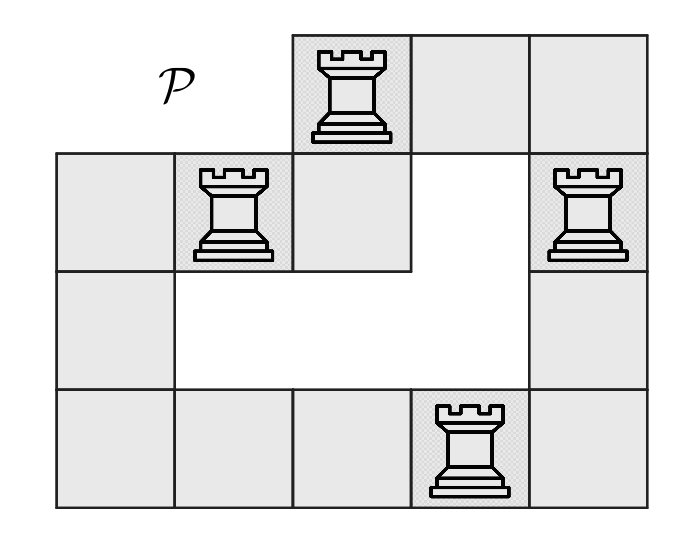}
	\caption{An example of a $4$-rook configuration in $\cP$.}
	\label{Figura:esempio rook configuration}
\end{figure}

\noindent The rook number $r(\cP)$ is the maximum number of rooks which can be placed in $\cP$ in non-attacking positions. We denote by $\cR(\cP,k)$ the set of all $k$-rook configurations in $\cP$ and we set $r_k=\vert \cR(\cP,k)\vert $ for all $k\in \{0,\dots,r(\cP)\}$, conventionally $r_0=1$. The \textit{rook polynomial} of $\cP$ is the polynomial $r_{\cP}(t)=\sum_{k=0}^{r(\cP)}r_kt^k \in \mathbb{Z}[t]$. \\
\noindent We recall that a polyomino is \emph{thin} if it does not contain the square tetromino, that is the square consisting of four cells. In \cite{Trento3} the authors prove that if $\cP$ is a simple thin polyomino then $h_{K[\cP]}(t)=r_{\cP}(t)$ and, in particular, $\mathrm{reg} K[\cP]=r(\cP)$ (see \cite[Theorem 3.12]{Trento3}). Now we show how the rook polynomial is related to Hilbert-Poincar\'e series of the polyominoes considered in this work.


\begin{proposition}\label{Proposizione: Proprietà grado del rook polynomial I caso}
	Let $\cP$ be a $(\cL,\cC)$-polyomino. Let $\cP_1,\cP_2,\cP_3,\cP_4$ be the polyominoes in Section 3. Then:
\begin{enumerate}[(1)]
	\item $r(\cP_1)=r(\cP_2)=r(\cP)-1$;
	\item $r(\cP_3)=r(\cP)-2$;
	\item $r(\cP)-2\leq r(\cP_4)\leq r(\cP)$.
\end{enumerate}
\end{proposition}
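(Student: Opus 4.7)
The plan is to prove each of (1), (2), (3) as a pair of inequalities. The lower bounds on the $r(\cP_i)$ all follow from a uniform block-counting argument: the row $[A,A_r]$ of $\cP$ carries at most one rook in any configuration and the column $[A,B_s]$ carries at most one, and since these two blocks meet only at $A$ the whole $\cL$ carries at most two rooks, as does the sub-collection $\{A,A_1,B_1\}$ (since $\{A,A_1\}$ lies in one row and $\{A,B_1\}$ in one column). Deleting from an optimal configuration on $\cP$ the rooks that sit on $[A,A_r]$, on $[A,B_s]$, on $\cL$, or on $\{A,A_1,B_1\}$, yields respectively $r(\cP_1)\ge r(\cP)-1$, $r(\cP_2)\ge r(\cP)-1$, $r(\cP_3)\ge r(\cP)-2$, and $r(\cP_4)\ge r(\cP)-2$. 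Together with the trivial inclusion $\cP_4\subseteq \cP$ giving $r(\cP_4)\le r(\cP)$, this already establishes (3).

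For the reverse inequalities $r(\cP)\ge r(\cP_i)+k$ in (1) and (2), I would extend an optimal configuration on $\cP_i$ by adding rooks in $\cL$. The key structural observation is that $A_1$ is vertically isolated in $\cP$: no cell of $\cP$ sits directly above or below it. Indeed a cell of $\cP$ above $A_1$ would share the upper edge of $A_1$ and hence contain the vertex $b$; a direct inspection of the four possible forms of $V(\cL)\cap V(\cC)$ in Definition~\ref{Definizione: (L,C)-polimino} shows that none of $a,b,c,d$ ever belongs to the intersection, so $\cC$ has no cell with $b$ as a vertex, and obviously no cell of $\cL$ sits directly above $A_1$ either. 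The argument for the cell below $A_1$ uses the vertex $d$ in the same way; analogously, $B_1$ is horizontally isolated in $\cP$, using the vertices $b$ and $c$. Consequently, the maximal vertical block of $\cP$ through $A_1$ is $\{A_1\}$ and its maximal horizontal block is exactly $[A,A_r]$, and symmetric statements hold for $B_1$.

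The extensions are then immediate. Starting from an optimal configuration on $\cP_1$, a rook placed on $A_1$ causes no attack: its column in $\cP$ is just $\{A_1\}$, and its row $[A,A_r]$ is entirely outside $\cP_1$. This gives $r(\cP)\ge r(\cP_1)+1$, completing (1); the case of $\cP_2$ is symmetric, using $B_1$. For (2), starting from an optimal configuration on $\cP_3=\cC$, one places rooks on both $A_1$ and $B_1$: these cells lie in different rows and columns of $\cP$, and by the isolation statement neither is attacked by any cell of $\cC$, giving $r(\cP)\ge r(\cP_3)+2$. The only real work is the vertex bookkeeping establishing the isolation of $A_1$ and $B_1$, which is a routine case analysis over the four explicit descriptions of $V(\cL)\cap V(\cC)$; I expect no substantive obstacle beyond writing the four verifications cleanly.
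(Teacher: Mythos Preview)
Your argument is correct and follows essentially the same logic as the paper's proof: each block of $\cL$ carries at most one rook, and one can always extend a configuration on $\cP_i$ by placing a rook on the isolated cell $A_1$ (respectively $B_1$). The paper's proof is very terse and takes the vertical isolation of the cells $A_1,\dots,A_{r-1}$ for granted, whereas you make this explicit via the vertex bookkeeping with $V(\cL)\cap V(\cC)$; this added care is justified but not a departure in method. The only minor difference is in (3): the paper obtains the lower bound $r(\cP_4)\ge r(\cP)-2$ by chaining through the inclusion $\cP_3\subseteq\cP_4$ and the already-proved equality $r(\cP_3)=r(\cP)-2$, while you argue it directly by deleting the at most two rooks on $\{A,A_1,B_1\}$ from an optimal configuration on $\cP$; both routes are equally short.
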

\begin{proof}
 $(1)$  Let $\cP_1= \cP\backslash [A,A_r]$. Once we fix a rook in a cell of $[A_1,\dots,A_{r-1}]$, we cannot place another rook in $[A,A_r]$ in non-attacking position in $\cP$, so $r(\cP_1)=r(\cP)-1$. In a similar way it can be showed that $r(\cP_2)=r(\cP)-1$.\\
 $(2)$ It follows by similar previous arguments on the intervals $[A,A_r]$ and $[A,B_s]$.\\
 $(3)$ Since $\cP=\cP_4\cup [A,A_1]\cup [A,B_1]$, it is obvious that $r(\cP_4)\leq r(\cP)$. Moreover, $\cP_4=\cP_3\cup [A_2,A_r]\cup [B_2,B_s]$, so $r(\cP_3)\leq r(\cP_4)$, that is $r(\cP)-2\leq r(\cP_4)$. In particular, observe that if $r,s>3$ then $r(\cP_4)=r(\cP)$, if either $r=3$ or $s=3$ then $r(\cP_4)=r(\cP)-1$, and if $r,s=3$ then $r(\cP_4)=r(\cP)-2$.
\end{proof}

\begin{theorem}\label{Teorema: h-polinomio = rook polinomio caso L-conf}
	Let $\cP$ be a $(\cL,\cC)$-polyomino. Suppose that $\cC$ is a simple thin polyomino. Then $h_{K[\cP]}(t)$ is the rook polynomial of $\cP$. Moreover $\mathrm{reg}(K[\cP])=r(\cP)$.
\end{theorem}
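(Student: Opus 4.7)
The plan is to combine Corollary~\ref{HilbertCsimple} with the rook-polynomial theorem of Rinaldo--Romeo \cite[Theorem~3.12]{Trento3} and a deletion-contraction computation on rook polynomials. A first step is to verify that the four auxiliary polyominoes $\cP_1,\cP_2,\cP_3,\cP_4$ of Definition~\ref{Definizione: (L,C)-polimino} are all simple and thin. Simplicity holds because removing either an arm of $\cL$ or the three corner cells $\{A,A_1,B_1\}$ destroys the unique hole of $\cP$, leaving a polyomino whose complement in $\mathbb{Z}^2$ is connected. Thinness follows from the constraint $|V(\cL)\cap V(\cC)|=4$ built into the definition: if, say in case $(1)$, the cell above $A_{r-1}$ were in $\cC$, then the additional vertex $b_{r-2}$ would lie in $V(\cL)\cap V(\cC)$, contradicting the definition. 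Hence no $2\times 2$ square appears at the junctions between $\cL$ and $\cC$, so $\cP$ itself is thin and therefore every $\cP_i$ is thin. Applying \cite[Theorem~3.12]{Trento3} gives $h_{K[\cP_i]}(t)=r_{\cP_i}(t)$ for each $i\in[4]$, and plugging these into Corollary~\ref{HilbertCsimple} produces
$$h_{K[\cP]}(t)=r_{\cP_4}(t)+t\bigl[r_{\cP_1}(t)+r_{\cP_2}(t)+(1-t)\,r_{\cP_3}(t)\bigr].$$

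What remains is the purely combinatorial identity
$$r_{\cP}(t)=r_{\cP_4}(t)+t\bigl[r_{\cP_1}(t)+r_{\cP_2}(t)+(1-t)\,r_{\cP_3}(t)\bigr].$$
I would derive it by iterating the standard deletion-contraction formula $r_{\cS}(t)=r_{\cS\setminus\{C\}}(t)+t\cdot r_{\cS/\!/ C}(t)$, where $\cS/\!/C$ denotes $\cS$ with all the cells attacking $C$ in $\cS$ (i.e.\ the row and column of $C$ in $\cS$) removed. Applying it successively at the cells $A$, $A_1$, $B_1$, and exploiting the geometric fact that their rows and columns in $\cP$ are entirely contained in $\cL$, one computes $\cP/\!/A=\cP_3$, $(\cP\setminus\{A\})/\!/A_1=\cP_1$, $(\cP\setminus\{A,A_1\})/\!/B_1=\cP_2\setminus\{A_1\}$, and $\cP\setminus\{A,A_1,B_1\}=\cP_4$. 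Unrolling the three steps gives
$$r_{\cP}(t)=r_{\cP_4}(t)+t\,r_{\cP_1}(t)+t\,r_{\cP_2\setminus\{A_1\}}(t)+t\,r_{\cP_3}(t).$$
A final deletion-contraction at $A_1$ inside $\cP_2$ (whose row there is $[A_1,A_r]$ and whose column there is $\{A_1\}$) yields $r_{\cP_2}(t)=r_{\cP_2\setminus\{A_1\}}(t)+t\,r_{\cP_3}(t)$; substituting this into the previous line collapses the expression into the desired identity.

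For the regularity statement, Cohen--Macaulayness of $K[\cP]$ -- available in the closed-path / $L$-configuration setting through \cite{Cisto_Navarra_CM_closed_path} -- together with $\deg r_{\cP}(t)=r(\cP)$ gives $\mathrm{reg}\,K[\cP]=\deg h_{K[\cP]}(t)=r(\cP)$. The main obstacle in the argument is the geometric lemma feeding the deletion-contraction step: one must check, in each of the four cases of Definition~\ref{Definizione: (L,C)-polimino}, that the rows and columns of $A$, $A_1$ and $B_1$ in $\cP$ do not leak into $\cC$. The four cases are equivalent under rotation and reflection, so only one needs detailed verification; still, this is the single place where the combinatorics of the $(\cL,\cC)$-polyomino really enters, and it is essential to rule out any cell of $\cC$ lying in the same row or column as $A$, $A_1$ or $B_1$.
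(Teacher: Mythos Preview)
Your proposal is correct and follows essentially the same strategy as the paper: invoke Corollary~\ref{HilbertCsimple} together with \cite[Theorem~3.12]{Trento3} for the four simple thin polyominoes $\cP_1,\dots,\cP_4$, and then verify the combinatorial identity $r_{\cP}(t)=r_{\cP_4}(t)+t\bigl[r_{\cP_1}(t)+r_{\cP_2}(t)+(1-t)r_{\cP_3}(t)\bigr]$. The only difference is in how that identity is checked: the paper argues coefficientwise by inclusion--exclusion on the position of rooks among $A,A_1,B_1$ (with $-r_{k-2}^{(3)}$ correcting the double count of configurations having rooks on both $A_1$ and $B_1$), whereas you obtain the same identity by three deletion--contraction steps at $A,A_1,B_1$ followed by one at $A_1$ inside $\cP_2$. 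These are two packagings of the same argument; your version is slightly more systematic, the paper's slightly more explicit about which rook placements correspond to which term. The geometric verification you flag (that the maximal blocks through $A,A_1,B_1$ stay inside $\cL$) is exactly what the paper uses implicitly in items (2)--(5) of its proof, and it is indeed forced by $|V(\cL)\cap V(\cC)|=4$.
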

\begin{proof}
	It is known that $h_{K[\cP]}(t)=h_{K[\cP_4]}(t)+t\big[h_{K[\cP_1]}(t)+h_{K[\cP_2]}(t)+(1-t)h_{K[\cP_3]}(t)\big].$
	We denote by $r_{\cP_j}(t)=\sum_{k=0}^{r(\cP_j)}r_k^{(j)}t^k$ the rook polynomial of $\cP_j$.
	Since $\cC$ is a simple thin polyomino, then $\cP_1$, $\cP_2$, $\cP_3$ and $\cP_4$ are simple thin polyominoes, so $h_{K[\cP_j]}(t)=r_{\cP_j}(t)$ for $j\in\{1,2,3,4\}$. By Proposition \ref{Proposizione: Proprietà grado del rook polynomial I caso} we have $\deg h_{K[\cP]}=r(\cP)$. Then $$h_{K[\cP]}(t)=\sum_{k=0}^{r(\cP)}[r_k^{(4)}+r_{k-1}^{(1)}+r_{k-1}^{(2)}+r_{k-1}^{(3)}-r_{k-2}^{(3)}]t^k,$$
	where we set $r_{-1}^{(j)},r_{-2}^{(3)},r_{r(\cP)-1}^{(3)},r_k^{(4)}$ equal to $0$, for all $j\in\{1,2,3\}$ and for $k\geq r(\cP_4)$. \\
	We want to prove that $r_k^{(4)}+r_{k-1}^{(1)}+r_{k-1}^{(2)}+r_{k-1}^{(3)}-r_{k-2}^{(3)}$ is exactly the number of ways in which $k$ rooks can be placed in $\cP$ in non-attacking positions, for all $k\in\{0,\dots,r(\cP)\}$. Fix $k\in\{0,\dots,r(\cP)\}$. Observe that:
	\begin{enumerate}[(1)]
		\item $r_k^{(4)}$ can be viewed as the number of $k$-rook configurations in $\cP$ such that no rook is placed on $A,A_1$ and $B_1$.
		\item Assume that a rook $\mathcal{T}$ is placed in $A_1$. Then we cannot place any rook on a cell of $[A,A_r]$, so $r_{k-1}^{(1)}$ is the number of all $(k-1)$-rook configurations in $\cP_1$. Hence $r_{k-1}^{(1)}$ is the number of all $k$-rook configurations in $\cP$ such that a rook is on $A_1$. Observe that there are some $k$-rook configurations in $\cP$ in which a rook $\mathcal{T}'\neq \mathcal{T}$ is on $B_1$. Paraphrasing, note that $r_{k-1}^{(1)}$ is the number of all $k$-rook configurations in $\cP$ such that $\mathcal{T}$ is on $A_1$ and $\mathcal{T}'$ is not on $B_1$ plus those ones where $\mathcal{T}$ is on $A_1$ and $\mathcal{T}'$ is on $B_1$.
		\item Assume that a rook $\mathcal{T}$ is placed in $B_1$. Arguing as before, $r_{k-1}^{(1)}$ is the number of all $k$-rook configurations in $\cP$ such that $\mathcal{T}$ is on $B_1$ and $\mathcal{T}'$ is not on $A_1$ plus those ones where $\mathcal{T}$ is on $B_1$ and $\mathcal{T}'$ is on $A_1$. 
		\item Assume that a rook is placed on $A$. Then we cannot place any rook on a cell of $[A,A_r]\cup[A,B_s]$, so $r_{k-1}^{(3)}$ is the number of all $(k-1)$-rook configurations in $\cP_3$, that is the number of $k$-rook configurations in $\cP$ such that a rook is placed on $A$.
		\item  Fix a rook $\mathcal{T}$ in $A_1$ and another one $\mathcal{T}'$ in $B_1$. Then we cannot place any rook on a cell of $[A,A_r]\cup [A,B_s]$, so $r_{k-2}^{(3)}$ is the number of all $(k-2)$-rook configurations in $\cP_3$. Hence $r_{k-2}^{(3)}$ is the number of all $k$-rook configurations in $\cP$ such that a rook is on $A_1$ and another is on $B_1$.
	\end{enumerate} 
	From $(1),(2),(3),(4)$ and $(5)$ it follows that $r_k^{(4)}+r_{k-1}^{(1)}+r_{k-1}^{(2)}+r_{k-1}^{(3)}-r_{k-2}^{(3)}$ is the number of $k$-rook configurations in $\cP$.
\end{proof}


\begin{proposition}
Let $\cP$ be a closed path satisfying the conditions in Subsection~\ref{ladder3}. Then $h_{K[\cP]}(t)$ is the rook polynomial of $\cP$ and $\mathrm{reg}(K[\cP])=r(\cP)$.
\label{prop1}
\end{proposition}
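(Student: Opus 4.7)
The strategy is to reduce the claim, via Theorem~\ref{Hilbert_series_ladder}, to a combinatorial identity on rook polynomials and then establish that identity directly by partitioning $\cR(\cP,k)$ for each $k$.

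First I would check that $\cK_1,\cK_2,\cK_3,\cK_4$ are simple thin polyominoes. Thinness is inherited from $\cP$ (a closed path contains no square tetromino, and this property passes to any subcollection of cells). Simplicity holds because each $\cK_i$ is obtained from $\cP$ by removing cells that break the cycle of the closed path, leaving a path-like polyomino without holes. Hence by \cite[Theorem 3.12]{Trento3} we have $h_{K[\cK_i]}(t)=r_{\cK_i}(t)$ for every $i\in[4]$. Inserting these equalities into Theorem~\ref{Hilbert_series_ladder}, the equality $h_{K[\cP]}(t)=r_\cP(t)$ is then equivalent to the combinatorial identity
\[
r_\cP(t) \;=\; r_{\cK_4}(t)+t\bigl[r_{\cK_1}(t)+2\,r_{\cK_2}(t)+r_{\cK_3}(t)\bigr].
\]

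To prove this identity I would partition $\cR(\cP,k)$ according to the rook pattern on the four distinguished cells $\{A,A_1,B,B_r\}$. Using the closed-path conditions together with the maximality of $\cB_1,\cB_2$, the attack region inside $\cP$ of each of these cells is $\cB_2\cup\{B\}$ for $A$, $\cB_2$ for $A_1$, $\cB_1\cup\{A\}$ for $B$, and $\cB_1$ for $B_r$; in particular the only conflicting pairs among the four cells are $\{A,A_1\}$, $\{B,B_r\}$ and $\{A,B\}$. I would then consider the five pairwise disjoint classes: (a) no rook on any of the four cells; (b) a rook on $B_r$; (c) a rook on $B$; (d) a rook on $A_1$ together with no rook on $\{B,B_r\}$; (e) a rook on $A$ together with no rook on $B_r$. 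A direct check on the eight admissible patterns of rook placements in $\{A,A_1,B,B_r\}$ shows that each configuration belongs to exactly one of (a)--(e), and removing in each case the placed rook and all attacked cells makes the five classes contribute respectively $r_k(\cK_4)$, $r_{k-1}(\cK_1)$, $r_{k-1}(\cK_3)$, $r_{k-1}(\cK_2)$ and $r_{k-1}(\cK_2)$. The coefficient $2$ in front of $r_{\cK_2}(t)$ thus arises precisely from the two symmetric contributions (d) and (e), both producing $\cK_2$ as residual polyomino.

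The regularity statement then follows: since $K[\cP]$ is Cohen--Macaulay by \cite{Cisto_Navarra_CM_closed_path}, one has $\mathrm{reg}(K[\cP])=\deg h_{K[\cP]}(t)=\deg r_\cP(t)=r(\cP)$. The main technical obstacle I anticipate is the correct identification of the vertical attack regions of the four corner cells: one has to use in an essential way that $A$ and $B$ form a single maximal vertical block of length two in $\cP$ and that $A_1$, $B_r$ both sit alone in their columns, facts which follow from the defining conditions of closed path and from the maximality of $\cB_1,\cB_2$ imposed at the beginning of Subsection~\ref{ladder3}.
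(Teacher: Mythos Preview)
Your proposal is correct and follows essentially the same route as the paper's proof: both apply Theorem~\ref{Hilbert_series_ladder}, invoke \cite[Theorem 3.12]{Trento3} for the simple thin subpolyominoes $\cK_1,\cK_2,\cK_3,\cK_4$, and then verify the identity $r_k=r_k^{(4)}+r_{k-1}^{(1)}+2r_{k-1}^{(2)}+r_{k-1}^{(3)}$ via the very same partition of $\cR(\cP,k)$ according to which of the cells $A,A_1,B,B_r$ carry a rook. Your five classes (a)--(e) coincide with the paper's cases (1)--(4), and your identification of the attack regions is exactly what the paper uses implicitly.
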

\begin{proof}
It can be proved by similar arguments as in Proposition \ref{Proposizione: Proprietà grado del rook polynomial I caso} that the rook numbers of $\cK_1$, $\cK_2$, $\cK_3$ and $\cK_4$ satisfy the following:
		\begin{enumerate}[(1)]
			\item $r(\cK_1)=r(\cP)-1$;
			\item $r(\cP)-2\leq r(\cK_2)\leq r(\cP)-1$;
			\item $r(\cK_3)=r(\cP)-1$;
			\item $r(\cP)-2\leq r(\cK_4)\leq r(\cP)$.
		\end{enumerate}
		We denote by $r_{\cK_j}(t)=\sum_{k=0}^{r(\cK_j)}r_k^{(j)}t^k$ the rook polynomial of $\cK_j$, for $j=1,2,3,4$. Observe that
    $\cK_1$, $\cK_2$, $\cK_3$ and $\cK_4$ are simple thin polyominoes, so $h_{K[\cK_j]}(t)=r_{\cK_j}(t)$ for $j\in\{1,2,3,4\}$, and by the above formulas and Theorem~\ref{Hilbert_series_ladder} we have $\deg h_{K[\cP]}=r(\cP)$. Moreover $$h_{K[\cP]}(t)=\sum_{k=0}^{r(\cP)}[r_k^{(4)}+r_{k-1}^{(1)}+2r_{k-1}^{(2)}+r_{k-1}^{(3)}]t^k,$$
	where we set $r_{-1}^{(j)},r_k^{(4)},r_l^{(2)}$ equal to $0$, for all $j\in\{1,2,3\}$, for $k\geq r(\cK_4)$ and $l\geq r(\cK_2)$. \\
	Similarly as done in Theorem \ref{Teorema: h-polinomio = rook polinomio caso L-conf}, we have that $r_k^{(4)}+r_{k-1}^{(1)}+2r_{k-1}^{(2)}+r_{k-1}^{(3)}$ is the number of $k$-rook configurations in $\cP$, for all $k\in\{0,\dots,r(\cP)\}$. In fact, let $k\in\{0,\dots,r(\cP)\}$. Observe that:
	\begin{enumerate}[(1)]
		\item $r_k^{(4)}$ is the number of $k$-rook configurations in $\cP$ such that no rook is placed on $A,A_1$, $B$ and $B_r$.
		\item Fix a rook $\mathcal{T}$ on $B_r$. Then $r_{k-1}^{(1)}$ is the number of all $k$-rook configurations in $\cP$ such that $\mathcal{T}$ is on $B_r$. Observe that among these configurations, there are some $k$-rook configurations in which $\mathcal{T}'\neq \mathcal{T}$ is placed either in $A$ or in $A_1$.
		\item Fix a rook $\mathcal{T}$ in $B$. Then $r_{k-1}^{(3)}$ is the number of all $k$-rook configurations in $\cP$ such that $\mathcal{T}$ is on $B$. As before, among these configurations there are some $k$-rook configurations in which $\mathcal{T}'\neq \mathcal{T}$ is placed in $A_1$.
	\item Assume that a rook is placed in $A$ (resp. $A_1$). Then $r_{k-1}^{(2)}$ is the number of all $k$-rook configurations in $\cP$ such that $\mathcal{T}$ is on $A$ (resp. $A_1$), and no rook is on a cell of $[A,A_s]\cup \{B,B_r\}$.
	\end{enumerate}
	From $(1),(2),(3)$ and $(4)$ we have the desired conclusion.
\end{proof}

\noindent In order to complete the study of closed path polyominoes having no $L$-configuration, it remains to consider 1-Configuration and 2-Configuration introduced in Subsection~\ref{ladder2}. For such cases we mention only the analogous result, omitting the proof since the arguments are similar.

\begin{proposition}\label{prop2}
Let $\cP$ be a closed path which satisfies the conditions of 1-Configuration or 2-Configurations in Subsection~\ref{ladder2}. Then $h_{K[\cP]}(t)$ is the rook polynomial of $\cP$ and $\mathrm{reg}(K[\cP])=r(\cP)$.
\end{proposition}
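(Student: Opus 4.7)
The plan is to adapt the combinatorial arguments of Theorem \ref{Teorema: h-polinomio = rook polinomio caso L-conf} and Proposition \ref{prop1}, applying the two Hilbert--Poincar\'e series formulas from Theorem \ref{Hilbert-1conf} (for 1-Configuration) and Theorem \ref{Hilbert-2conf} (for 2-Configuration). First I observe that the auxiliary polyominoes $\cR_1, \cR_2, \cQ_1$ appearing in the 1-Configuration formula and $\cQ_1, \cF_1, \cF_2$ appearing in the 2-Configuration formula are all simple and thin: simplicity holds because removing the cells $A, A_1, B_1$, or the horizontal/vertical blocks extending from them, opens the unique hole of the closed path $\cP$, while thinness is inherited from $\cP$. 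Hence by \cite[Theorem 3.12]{Trento3}, the $h$-polynomials of these polyominoes coincide with their rook polynomials. Arguing as in Proposition \ref{Proposizione: Proprietà grado del rook polynomial I caso}, I bound the rook numbers of these auxiliary polyominoes in order to verify that $\deg h_{K[\cP]}(t) = r(\cP)$.

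For the 1-Configuration, Theorem \ref{Hilbert-1conf} gives $h_{K[\cP]}(t) = r_{\cR_1}(t) + t\, r_{\cR_2}(t) + t\, r_{\cQ_1}(t)$. I would partition the $k$-rook configurations of $\cP$ according to the rook occupancy of the two distinguished cells $A$ and $B_1$, obtaining three disjoint classes. Configurations with no rook on $A$ and no rook on $B_1$ coincide with the $k$-rook configurations of $\cR_1 = \cP \setminus \{A, B_1\}$, producing the first term. Configurations with a rook on $B_1$ (hence none on $A$, since $A$ and $B_1$ lie in a common row of $\cP$) force the remaining $k-1$ rooks to avoid the whole column $[B_1, B_r]$ as well as $A$, so they correspond to $(k-1)$-rook configurations of $\cR_2$, producing $t\, r_{\cR_2}(t)$. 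Configurations with a rook on $A$ forbid rooks on $A_1$ (same column) and $B_1$ (same row), leaving $k-1$ rooks on $\cQ_1$ and producing $t\, r_{\cQ_1}(t)$.

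For the 2-Configuration, Theorem \ref{Hilbert-2conf} gives $h_{K[\cP]}(t) = (1+t)\, r_{\cQ_1}(t) + t\, r_{\cF_1}(t) + t\, r_{\cF_2}(t)$. The key geometric fact here is that $A_1$ and $B_1$ share only the vertex $b$ and therefore do not attack each other, while $A$ attacks both. This time I partition the $k$-rook configurations of $\cP$ into four disjoint classes: those with no rook on $\{A, A_1, B_1\}$ (giving $r_{\cQ_1}(t)$); those with a rook on $A$, which excludes both $A_1$ and $B_1$ and contributes the extra $t\, r_{\cQ_1}(t)$; those with a rook on $A_1$ and no rook on $A$, possibly also carrying a rook on $B_1$ (the rook on $A_1$ forbids $\{A, A_1, \ldots, A_s\}$, so the remaining $k-1$ rooks live on $\cF_1$); and those with a rook on $B_1$ and no rook on $A_1$ or $A$ (matching $\cF_2$, which also excludes $A_1$). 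Once the identity $h_{K[\cP]}(t) = r_{\cP}(t)$ is established, the equality $\mathrm{reg}(K[\cP]) = r(\cP)$ follows from the Cohen--Macaulay property of $K[\cP]$, which yields $\mathrm{reg}(K[\cP]) = \deg h_{K[\cP]}(t)$.

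The main obstacle is justifying the claims about rook-attack neighbourhoods inside $\cP$: namely that within $\cP$ the row of $A$ is $\{A, B_1\}$, the column of $A$ is $\{A, A_1\}$, the row of $A_1$ is $[A_s, A_1]$, and the column of $B_1$ is $[B_1, B_r]$. These restrictions are essential to ensure that the partitions above actually correspond to the Hilbert series decomposition, as they guarantee the bijection between $(k-1)$-rook configurations in each auxiliary polyomino and the suitable class of $k$-rook configurations of $\cP$. They follow from the defining axioms of a closed path, which prevent $\cM$ from re-entering the pentomino region via the attachment vertices in either configuration, but a careful case-by-case verification distinguishing the 1- and 2-Configurations (and the attachment points $\{c_{s-1}, c_s, d_{r-1}, d_r\}$, respectively $\{b_{s-1}, b_s, d_{r-1}, d_r\}$) is required.
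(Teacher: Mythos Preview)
Your proposal is correct and follows essentially the same approach the paper intends: the authors explicitly omit the proof, stating that ``the arguments are similar'' to Theorem~\ref{Teorema: h-polinomio = rook polinomio caso L-conf} and Proposition~\ref{prop1}, and your decomposition---partitioning $k$-rook configurations of $\cP$ according to which of the distinguished cells $A,A_1,B_1$ carry a rook, matched to the auxiliary polyominoes appearing in Theorems~\ref{Hilbert-1conf} and~\ref{Hilbert-2conf}---is exactly the expected adaptation. The verification of the attack neighbourhoods of $A,A_1,B_1$ that you flag as the ``main obstacle'' is routine once one notes that the attachment vertices $\{c_{s-1},c_s,d_{r-1},d_r\}$ (resp.\ $\{b_{s-1},b_s,d_{r-1},d_r\}$) force $\cM$ to meet $\cW$ only along the right edge of $B_r$ and the bottom (resp.\ top) edge of $A_s$.
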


\noindent Observing that a closed path having an $L$-configuration is an $(\cL,\cC)$-polyomino with $\cC$ a path of cells and gathering all the results above, we obtain the following general result.

\begin{theorem}
Let $\cP$ be a closed path having no zig-zag walks, equivalently having an $L$-configuration or a ladder of three steps. Then:
\begin{enumerate}[(1)]
	\item  $K[\cP]$ is a normal Cohen-Macaulay domain of Krull dimension $\vert V(\cP)\vert -\mathrm{rank}\,\cP$;
	\item $h_{K[\cP]}(t)$ is the rook polynomial of $\cP$ and $\mathrm{reg}(K[\cP])=r(\cP)$.
\end{enumerate} 
\label{ultimo}
\end{theorem}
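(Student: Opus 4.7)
The plan is to assemble the results developed throughout the paper into a single statement, so the proof is essentially a case analysis driven by the structural dichotomy from \cite[Section 6]{Cisto_Navarra_closed_path}: a closed path $\cP$ has no zig-zag walks if and only if it contains either an $L$-configuration or a ladder of at least three steps. This is exactly the hypothesis under which $I_\cP$ is prime, so $K[\cP]$ is a domain, and it is also the hypothesis under which all the Hilbert-Poincar\'e series computations of Sections 3 and 4 apply.

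For part (1), normality and the Cohen-Macaulay property of $K[\cP]$ are already established for precisely this class in \cite[Theorem 4.10]{Cisto_Navarra_CM_closed_path}, so it suffices to cite that result. The Krull dimension statement $\dim K[\cP] = |V(\cP)|-\mathrm{rank}\,\cP$ then falls out of whichever Hilbert-Poincar\'e series formula applies to $\cP$: Corollary \ref{HilbertCsimple}, Theorem \ref{Hilbert-1conf}, Theorem \ref{Hilbert-2conf}, or Theorem \ref{Hilbert_series_ladder}. Each of these explicitly asserts the dimension identity, so the only real work is to verify exhaustiveness of the case split.

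For the case split itself, I would proceed as follows. If $\cP$ contains an $L$-configuration, then $\cP$ can be realized as an $(\cL,\cC)$-polyomino in the sense of Definition \ref{Definizione: (L,C)-polimino}, where $\cL$ is built from the two orthogonal maximal blocks meeting at the corner of the $L$-configuration and $\cC$ is the remaining simple thin path of cells; Corollary \ref{HilbertCsimple} gives (1) and Theorem \ref{Teorema: h-polinomio = rook polinomio caso L-conf} gives the rook polynomial identification in (2). Otherwise, $\cP$ has no $L$-configuration but contains a ladder of at least three steps, and we invoke Section 4: after rotation/reflection placing the ladder in the standard right/up orientation, either at least one of the extremal blocks $\cB_1,\cB_2$ has exactly two cells (falling into the 1-Configuration or 2-Configuration framework of Subsection \ref{ladder2}), in which case Theorems \ref{Hilbert-1conf} or \ref{Hilbert-2conf} combined with Proposition \ref{prop2} apply; or both extremal blocks have at least three cells, in which case Theorem \ref{Hilbert_series_ladder} together with Proposition \ref{prop1} applies.

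For the regularity statement in (2), once each case has yielded $h_{K[\cP]}(t)=r_\cP(t)$, we combine this with the Cohen-Macaulay property from (1): the Hilbert-Serre representation $\rHP_{K[\cP]}(t)=h_{K[\cP]}(t)/(1-t)^d$ and the standard fact that $\mathrm{reg}(K[\cP])=\deg h_{K[\cP]}(t)$ for Cohen-Macaulay rings give $\mathrm{reg}(K[\cP])=\deg r_\cP(t)=r(\cP)$. The main obstacle — and the only nontrivial bookkeeping — is ensuring that the three ladder cases of Section 4 together with the $L$-configuration case of Section 3 really exhaust all prime closed paths up to symmetry; this reduces to checking that the opening normalization in Section \ref{Section: H-P series of prime closed path polyominoes having no L-configuration} (choosing a ladder of minimal length extension and applying reflections/rotations to fix an orientation) is always available.
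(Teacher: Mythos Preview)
Your proposal is correct and follows essentially the same approach as the paper. The paper's own proof is in fact the single sentence preceding the theorem (``Observing that a closed path having an $L$-configuration is an $(\cL,\cC)$-polyomino with $\cC$ a path of cells and gathering all the results above, we obtain the following general result''), and your case analysis is precisely the explicit unpacking of that sentence: $L$-configuration $\Rightarrow$ Corollary~\ref{HilbertCsimple} and Theorem~\ref{Teorema: h-polinomio = rook polinomio caso L-conf}; no $L$-configuration but a ladder $\Rightarrow$ the three subcases of Section~\ref{Section: H-P series of prime closed path polyominoes having no L-configuration} handled by Theorems~\ref{Hilbert-1conf}, \ref{Hilbert-2conf}, \ref{Hilbert_series_ladder} and Propositions~\ref{prop1}, \ref{prop2}; normality and Cohen--Macaulayness from \cite{Cisto_Navarra_CM_closed_path}.
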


 \noindent At this point we are ready to provide the condition for the Gorenstein property of a closed path polyomino having no zig-zag walks. Recall the definition of \textit{S-property} given in \cite{Trento3} for a thin polyomino.
 \begin{definition}\rm
 	Let $\cP$ be a thin polyomino. A cell $C$ is called \textit{single} if there exists a unique maximal interval of $\cP$ containing $C$. $\cP$ has the \textit{S-property} if every maximal interval of $\cP$ has only one single cell.
 \end{definition}

\noindent Observe that if $\cP$ is a closed path polyomino, then $\cP$ has the S-property if and only if every maximal block of $\cP$ contains exactly three cells. 
 
\begin{theorem}
	Let $\cP$ be a closed path having no zig-zag walks. The following are equivalent:
	\begin{enumerate}[(1)]
		\item $\cP$ has the $S$-property;
		\item $K[\cP]$ is Gorenstein.
	\end{enumerate}
	\end{theorem}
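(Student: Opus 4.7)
The plan is to deduce the theorem from Theorem \ref{ultimo} combined with Stanley's criterion for Gorensteiness of Cohen--Macaulay graded domains. By Theorem \ref{ultimo}, $K[\cP]$ is a Cohen--Macaulay graded domain over $K$ and its $h$-polynomial coincides with the rook polynomial $r_\cP(t)$. Stanley's theorem then asserts that $K[\cP]$ is Gorenstein if and only if $r_\cP(t)$ is palindromic, i.e. $r_k=r_{r(\cP)-k}$ for all $k\in\{0,\dots,r(\cP)\}$. Therefore the statement reduces to the purely combinatorial assertion that every maximal block of $\cP$ has length exactly three if and only if $r_\cP(t)$ is palindromic.

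For the implication $(1)\Rightarrow (2)$, I would work with the bipartite graph $G_\cP$ whose vertices are the maximal horizontal and maximal vertical intervals of $\cP$ (trivial length-one intervals included) and whose edges are the cells of $\cP$, each joining the horizontal and the vertical maximal interval that contains it. Since $\cP$ is thin, rook configurations correspond bijectively to matchings in $G_\cP$. Under the $S$-property, $G_\cP$ has a very controlled shape: each length-three block becomes a degree-three vertex incident to two ``cycle'' edges (its two extremal cells, shared with adjacent blocks) and one ``pendant'' edge (its unique middle, single cell, joining it to a length-one orthogonal interval). Altogether $G_\cP$ is an even cycle alternating between horizontal and vertical length-three blocks, with a pendant attached to every cycle vertex. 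I would then establish palindromicity by giving an explicit bijection $\cR(\cP,k)\to \cR(\cP,r(\cP)-k)$ that, at each length-three block, toggles whether its pendant edge belongs to the matching and compensates using the two cycle edges at that vertex; this procedure preserves the matching property and changes the size of the matching by exactly $\pm 1$ per toggle, yielding $r_k=r_{r(\cP)-k}$.

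For the converse $(2)\Rightarrow (1)$, I would show the contrapositive: if some maximal block $B$ has length $\ell\neq 3$, then $r_{r(\cP)}\geq 2$, which together with $r_0=1$ rules out palindromicity. If $\ell=2$ then $B$ has no single cell, and the two cells of $B$ produce two parallel edges in $G_\cP$ between the same types of neighbouring vertices, so any maximum matching admits a local swap through $B$ that yields a distinct maximum configuration. If $\ell\geq 4$ then $B$ contains at least two single cells (pendant edges of $G_\cP$), and one may freely change which of these pendants is used in a maximum matching without affecting the forced choices at the extremal cells of $B$, again producing at least two maximum configurations. In both cases $r_{r(\cP)}\geq 2 > 1 = r_0$, so $r_\cP(t)$ cannot be palindromic.

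The main technical obstacle will be making the palindromic bijection of the forward direction fully rigorous, in particular verifying that the local pendant-toggle at each length-three block extends consistently around the whole cycle of blocks without creating matching conflicts at the corner cells shared by two adjacent blocks. Once the pendant-plus-cycle structure of $G_\cP$ afforded by the $S$-property is made precise, the matching enumeration is standard; computing $r(\cP)$ (which from the description of $G_\cP$ equals the number of length-three blocks of $\cP$) as a byproduct lets one match degrees on both sides of the identity $r_k=r_{r(\cP)-k}$.
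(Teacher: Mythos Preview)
Your reduction via Theorem \ref{ultimo} and Stanley's criterion is exactly what the paper does, so the problem correctly becomes the combinatorial statement about palindromicity of $r_\cP(t)$.

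For $(1)\Rightarrow(2)$ your route genuinely differs from the paper's. The paper removes one length-three block to obtain simple thin subpolyominoes $\cP_1,\cP_2,\cP_3$, writes a recurrence $r_k=r_k^{(1)}+r_{k-1}^{(1)}+r_{k-1}^{(2)}+r_{k-1}^{(3)}$, and then invokes \cite[Theorem 4.2]{Trento3} for the palindromicity of each $r_{\cP_j}$. Your matching picture is more self-contained: under the $S$-property, $G_\cP$ is precisely an even cycle with one pendant at every vertex, and the involution ``keep the cycle edges of $M$; put $p_i$ into $M'$ exactly when vertex $i$ is unmatched in $M$'' sends $k$-matchings to $(r(\cP)-k)$-matchings. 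This is correct and elegant, but note that the cycle edges are \emph{not} modified in this map; your phrase ``compensates using the two cycle edges'' is misleading and should be replaced by the explicit description above.

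The gap is in $(2)\Rightarrow(1)$, specifically the case $\ell=2$. Your claim that the two cells of a length-two block $B$ give ``two parallel edges in $G_\cP$'' is false: in $G_\cP$ the vertex $B$ has degree two, and its two incident edges (the two cells of $B$) go to the two \emph{different} adjacent maximal blocks, not to the same vertex. Hence there is no local ``swap through $B$'' of the kind you describe. Worse, the conclusion $r_{r(\cP)}\geq 2$ does not follow from your graph model alone: for instance, on $C_6$ with pendants attached only at two adjacent cycle vertices the maximum matching is unique, so some structural input specific to closed paths is indispensable. The paper does not attempt a direct rook-counting argument here; instead it first rules out blocks of rank $\geq 4$ (as you do), and then, using that $\cP$ must contain either an $L$-configuration or a subpolyomino as in Figure~\ref{Figura:pentomino}, reduces the remaining analysis to simple thin subpolyominoes and appeals to \cite[Theorem 4.2]{Trento3}. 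You will need either this reduction or a replacement argument that genuinely uses the closed-path constraints (in particular condition (4) of the definition) to handle the length-two case.
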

	\begin{proof}
		If $\cP$ has no zig-zag walks, then  $K[\cP]$ is a normal Cohen-Macaulay domain of Krull dimension $\vert V(\cP)\vert -\mathrm{rank}\,\cP$ 
		and $h_{K[\cP]}(t)=r_{\cP}(t)=\sum_{k=0}^{s}r_kt^k$, where $s=r(\cP)$. In such a case it is known (\cite{Stanley}) that $K[\cP]$ is Gorenstein if and only if $r_i=r_{s-i}$ for all $i=0,\dots,s$.\\
		$(1)\Rightarrow (2)$. Suppose that $\cP$ has the $S$-property. Fix $i\in\{0,1,\dots,r(\cP)\}$ and prove that $r_i=r_{s-i}$. Since $\cP$ has the $S$-property, $\cP$ consists of maximal cell intervals of rank three. If $i=0$ then it is trivial that $r_0=r_s=1$. Assume $i\in[s-1]$. It is not restrictive to consider a part of $\cP$ arranged as in Figure \ref{Figura:Dimostrazione Gorenstein L conf}.
		\begin{figure}[h]
			\centering
			\includegraphics[scale=0.7]{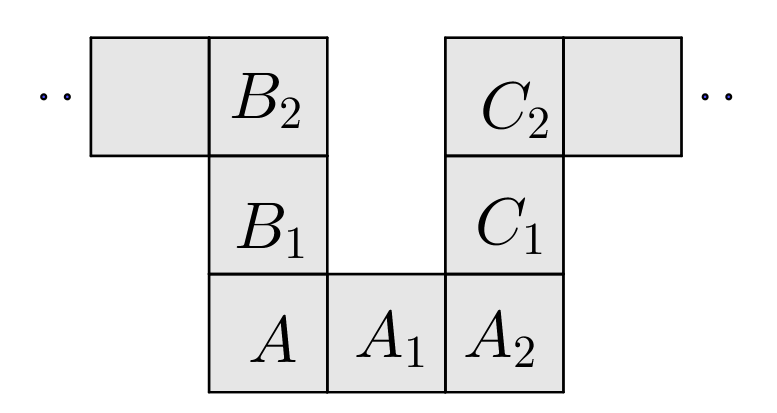}
			\caption{}
			\label{Figura:Dimostrazione Gorenstein L conf}
		\end{figure} 
	
		\noindent Define $\cP_1=\cP\backslash\{A,A_1,A_2\}$, $\cP_2=\cP\backslash\{A,A_1,A_2,C_1,C_2\}$ and $\cP_3=\cP\backslash\{A,A_1,A_2,B_1,B_2\}$.
		We denote by $r_{\cP_j}(t)=\sum_{k=0}^{r(\cP_j)}r_k^{(j)}t^k$ the rook polynomial of $\cP_j$. Observe that $r(\cP_1)=r(\cP)-1=s-1$ and $r(\cP_2)=r(\cP_3)=r(\cP)-2=s-2$. By similar arguments as in Theorem \ref{Teorema: h-polinomio = rook polinomio caso L-conf}, it is easy to prove that $r_{k}=r_k^{(1)}+r_{k-1}^{(1)}+r_{k-1}^{(2)}+r_{k-1}^{(3)}$ for all $k\in\{1,\dots,s\}$. Then
		\begin{align*}
			&r_{s-i}=r_{s-i}^{(1)}+r_{s-i-1}^{(1)}+r_{s-i-1}^{(2)}+r_{s-i-1}^{(3)}=r_{(s-1)-(i-1)}^{(1)}+\\
			&+r_{(s-1)-i}^{(1)}+r_{(s-2)-(i-1)}^{(2)}+r_{(s-2)-(i-1)}^{(3)}.
		\end{align*} 
	Since $\cP_1$, $\cP_2$ and $\cP_3$ are simple thin polyominoes having the $S$-property, then by Theorem 4.2 of \cite{Trento3} we have: $r_{(s-1)-(i-1)}^{(1)}=r_{i-1}^{(1)}$, $r_{(s-1)-i}^{(1)}=r_{i}^{(1)}$, $r_{(s-1)-(i-2)}^{(2)}=r_{i-1}^{(2)}$ and $r_{(s-2)-(i-1)}^{(3)}=r_{i-1}^{(3)}$. Hence
	\begin{align*}
		&r_{s-i}=r_{(s-1)-(i-1)}^{(1)}+r_{(s-1)-i}^{(1)}+r_{(s-2)-(i-1)}^{(2)}+r_{(s-2)-(i-1)}^{(3)}=r_{i-1}^{(1)}+\\
		&+r_{i}^{(1)}+r_{i-1}^{(2)}+r_{i-1}^{(3)}=r_i.
	\end{align*}
	$(2)\Rightarrow (1)$. Assume that $K[\cP]$ is Gorenstein, that is $r_i=r_{s-i}$ for all $i=0,\dots,s$. We prove that $\cP$ has the $S$-property. First of all, we observe that all the ranks of the maximal intervals of $\cP$ cannot be greater than or equal to four. In fact, if there exists a maximal interval $I=[A,B]$ with $\mathrm{rank}\, I\geq 4$, then we can consider two distinct cells $C,D\in I\backslash \{A,B\}$. Hence we can obtain an $s$-rook configuration in $\cP$ with a rook in $C$ and another one with a rook in $D$, so $r_{s}\geq 2>r_0=1$, that is a contradiction. In addition, in such a case, we can suppose that $\cP$ has an $L$-configuration, otherwise it is not difficult to see that $\cP$ has a subpolyomino as in Figure~\ref{Figura:pentomino}, and arguing as in the proof of the case $b)\Rightarrow c)$ (hypothesis (2)) of \cite[Theorem 4.2]{Trento3}, then $K[\cP]$ is not Gorenstein. So, let $\{A,A_1,A_2,B_1,B_2\}$ be an $L$-configuration of $\cP$, as in Figure \ref{Figura:Dimostrazione Gorenstein L conf}. Consider $\cP'=\cP\backslash\{A,A_1,A_2\}$, which is a simple thin polyomino. Let $r_{\cP'}(t)=\sum_{k=0}^{s'}r_k't^k$ be the rook polynomial of $\cP'$, where $s'=r(\cP)-1$. We prove that $\cP'$ has the $S$-property. Suppose that $\cP'$ has not the $S$-property so from the case $b)\Rightarrow c)$ of \cite[Theorem 4.2]{Trento3} it follows that either $r'_{s'}>1$ or $r'_{s'-1}>\mathrm{rank}\, \cP'$. Both cases lead to a contradiction with $r_s=1$ or $r_{s-1}=\mathrm{rank}\, \cP$. By similar arguments we can prove that $\cP''=\cP\backslash \{A,B_1,B_2\}$ is a simple thin polyomino having the $S$-property. Since $\cP'$ and $\cP''$ have the $S$-property, it follows trivially that also $\cP$ has the $S$-property. 
	\end{proof}
	
\begin{remark}\rm
If $\cQ$ is the weakly closed path introduced Subsection~\ref{ladder2}, then $K[\cQ]$ is a normal Cohen-Macaulay domain (Remark~\ref{RemarkQ}). From Theorem~\ref{HilbertQ} we obtain that $\mathrm{HP}_{K[\cQ]}(t)=\frac{h(t)}{(1-t)^{\vert V(\cP)\vert -\mathrm{rank}\,\cP}}$ with $h(t)=h_{K[\cP]}(t)-h_{K[\cQ_1]}(t)$. We know that $h_{K[\cP]}(t)$ and $h_{K[\cQ_1]}(t)$ are the rook polynomials respectively of $\cP$ and $\cQ_1$. Since $\cQ_1$ is contained in $\cP$, then $h(1)>0$. So the Krull dimension of $K[\cQ]$ is $\vert V(\cP)\vert -\mathrm{rank}\,\cP=\vert V(\cQ)\vert -\mathrm{rank}\,\cQ$. Moreover, by the same arguments adopted in this work, we obtain that $h(t)$ is the rook polynomial of $\cQ$ and $K[\cQ]$ is Gorenstein if and only if $\cQ$ has the $S$-property. 
\end{remark}
\vspace{0.1cm}
\begin{flushleft}
	\textbf{Concluding remarks.}\\
\end{flushleft}

\noindent In the existing literature, the Hilbert-Poincar\'e series and the Gorenstein property of polyomino ideals have been discussed only for a few classes of simple polyominoes. In this work, we provide
some results in this direction for a class of non-simple polyominoes, known as the closed paths without zig-zag walks. The coordinate rings of these polyominoes are known to be Cohen-Macaulay. In the light of our results, we propose the following questions:
\begin{enumerate}[(1)]
\item Is it possible to characterize the Gorenstein property for $(\cL,\cC)$-polyominoes based on the choice of $\cC$?
\item In this work, the Hilbert-Poincar\'e series of closed path polyominoes without zig-zag walks is studied. Is it possible to provide similar results for the Hilbert-Poincar\'e series of a closed path
polyomino with a zig-zag walk?\\
\end{enumerate} 

\noindent \textbf{Disclosure of Potential Conflicts of Interest:} The authors declare that there is no conflict of interest.

	\end{document}